\numberwithin{equation}{section}
\newtheorem{thm}{Theorem}[section]
\newtheorem{prop}[thm]{Proposition}
\newtheorem{exam}[thm]{Example}
\newtheorem{rem}[thm]{Remark}
\newtheorem{lem}[thm]{Lemma}
\def\Q{{\mathbb Q}}
\def\R{{\mathbb R}}
\def\d{{\rm d}}
\title{Spatial asymptotic behaviors of fractional stochastic heat equations driven by additive L\'evy white noise}
\author{Yuichi Shiozawa\thanks{Department of Mathematical Sciences, Faculty of Science and Engineering, Doshisha University,
Kyotanabe, Kyoto, 610-0394, Japan; \texttt{yshiozaw@mail.doshisha.ac.jp}}\qquad
Jian Wang\thanks{School of Mathematics and Statistics  \&
Key Laboratory of Analytical Mathematics and Applications (Ministry of Education) \& Fujian Provincial Key Laboratory
of Statistics and Artificial Intelligence,
Fujian Normal University, Fuzhou, 350007, P.R. China; \texttt{jianwang@fjnu.edu.cn}}}
\begin{document}
\allowdisplaybreaks
\maketitle
\begin{abstract}
We establish explicit integral tests for spatial asymptotic behaviors of fractional stochastic heat equations
driven by additive L\'evy white noise.
Our results indicate that fractional stochastic heat equations enjoy
the so-called additive physical intermittent property in all dimensions
when the driven L\'evy white noise is sufficiently light-tailed.
The proofs are based on heat kernel estimates for the fractional Laplacian
and exact tail behaviors for Poissonian functionals associated with the driven L\'evy white noise.
\end{abstract}

\medskip
\noindent
{\bf AMS 2010 Mathematics subject classification}: 60H15; 60F15; 35R60

\medskip\noindent
{\bf Keywords and phrases}: fractional stochastic heat equation;
L\'evy white noise; fractional Laplacian; asymptotic behavior; physical intermittency

\section{Introduction}
Stochastic partial differential equations (SPDEs) driven by space-time white noise
(i.e., Gaussian noise which has the covariance structure of Brownian motion in space-time),
initiated in \cite{Wa} via the random field approach,
have
been attracted a lot of increasing attentions in the past few decades
(see, e.g., \cite{DK,K0}).
Recently,
as a non-Gaussian space-time white noise counterpart,
there  
have been great  
developments in the study of SPDEs with L\'evy white noise by using the random field approach;
see \cite{Chong, Chong1, Chong1-D} and the references therein.
In this paper, we consider the following (linear) fractional stochastic heat equation
\begin{equation}\label{eq:fractional-she1}
\left\{
\begin{aligned}
\frac{\partial X}{\partial t}(t,x)
&=-(-\Delta)^{\alpha/2}X(t,x)+\dot{\Lambda}(t,x), \quad (t,x)\in (0,\infty)\times \R^d, \\
X(0,x)&=0, \quad x\in \R^d.
\end{aligned}
\right.
\end{equation}
Here, $-(-\Delta)^{\alpha/2}$ with $\alpha\in (0, 2)$ is the fractional Laplacian
which is the infinitesimal generator of a (rotationally) symmetric $\alpha$-stable process on $\R^d$,
and the measure $\Lambda$ is a L\'evy space-time white noise
on ${\cal B}((0,\infty))\otimes {\cal B}(\R^d)$ defined by
\[
\Lambda(\d t\, \d x)
=m\,\d t\,\d x+\int_{(0,1]}z\,(\mu-\nu)(\d t\, \d x\, \d z)
+\int_{(1,\infty)}z\,\mu(\d t\, \d x\, \d z),
\]
where $m\in \R$, and $\mu$ is a Poisson random measure on $(0,\infty)\times \R^d\times (0,\infty)$
corresponding to the L\'evy measure $\nu(\d t\, \d x\, \d z)=\d t \otimes \d x \otimes \lambda({\rm d}z)$
with $\lambda(\d z)$ being a nontrivial
measure on ${\cal B}((0,\infty))$ so that
$\int_{(0,\infty)}(1\wedge |z|^2)\,\lambda(\d z)<\infty$.
A mild solution to \eqref{eq:fractional-she1} is a predictable process $X(t,x)$
satisfying
\begin{equation}\label{mild---}
X(t,x)=\int_{(0,t]\times \R^d}p_{t-s}(x-y)\,\Lambda(\d s \, \d y), \quad (t,x)\in
(0,\infty) 
\times \R^d,
\end{equation}
where $p_t(x,y) 
=p_t(x-y)$ is the heat kernel of the fractional Laplacian
(also called the density function of the transition probability density for
the symmetric $\alpha$-stable process).
Nowadays there are lots of works on the stochastic heat equation
(that is, $\alpha=2$ in \eqref{eq:fractional-she1}) driven by non-Gaussian L\'evy white noises;
see \cite{Chong, Chong1, Chong1-D, M98, S98} and the references therein.
Though SPDEs with the fractional Laplacian play important roles
both in theories and applications (see e.g.\ \cite{Bi,FK,FL,FN,Kim}),
to the best of our
knowledge,
there is no literature focusing
on the fractional stochastic heat equation \eqref{eq:fractional-she1} with L\'evy space-time white noise.

The main aim of this paper is to establish the spatial asymptotic behaviors of $X(t,x)$,
i.e., the almost-sure behaviors of $X(t,x)$ for fixed time $t>0$ as $|x|\to \infty$.
In particular, we will extend the results in \cite{CK22} from the stochastic heat equation to the fractional
one.
It should be noted that such kind of
asymptotic behaviors are closely related to the phenomenon of intermittency in the analysis of random fields,
which refers to the chaotic behavior of a random
field that develops unusually high peaks over small areas.

To highlight the contribution of our paper,
in this section we suppose that the driven L\'evy white noise $ \Lambda$ associates with the L\'evy measure
\begin{equation}\label{e:levym}
\lambda((0,1])=0,\quad \lambda((z,\infty))= z^{-\beta},\,\, z>1
\end{equation}
for some $\beta>d/(d+\alpha)$.
We shall emphasize that, different from
the stochastic heat equation
driven by non-Gaussian L\'evy white noise considered in \cite{CK22,CK23},
the requirement that $\beta>d/(d+\alpha)$ is
optimal to ensure necessary and sufficient conditions for the
almost surely finiteness of the mild solution $X(t,x)$
to the fractional stochastic heat equation \eqref{eq:fractional-she1}
with the L\'evy white noise $ \Lambda$
satisfying \eqref{e:levym};
see Theorem \ref{thm:sol-exist} below for more details.

\begin{thm}\label{thm1}
Let $ \Lambda$ be the L\'evy white noise
such
that its associated L\'evy measure satisfies \eqref{e:levym}. Let $f:(0,\infty)\to (0,\infty)$ be a nondecreasing function.
\begin{itemize}
\item[{\rm (1)}] If $\alpha/d\ge d/(d+\alpha)$, then the following statements hold.
 \begin{itemize}

\item[{\rm (i)}] Suppose that $\beta>\alpha/d$. Then, almost surely,
$$\limsup_{r\to\infty}\frac{\sup_{|x|\le r}X(t,x)}{f(r)}=\infty \quad \text {or} \quad\limsup_{r\to\infty}\frac{\sup_{|x|\le r}X(t,x)}{f(r)}=0$$
according to whether the integral $$\int_1^\infty r^{d-1}f(r)^{-\alpha/d}\,dr$$ diverges or converges.
\item[{\rm (ii)}] Suppose that $\beta\in (\alpha/d,\infty)\setminus \{1+\alpha/d\}$. Then
$$\limsup_{r\to\infty}\frac{\sup_{x\in \mathbb Z^d, |x|\le r}X(t,x)}{f(r)}=\infty
\quad \text {or} \quad\limsup_{r\to\infty}\frac{\sup_{x\in \mathbb Z^d, |x|\le r}X(t,x)}{f(r)}=0$$
according to whether the integral
$$\int_1^\infty r^{d-1}f(r)^{-((1+\alpha/d)\wedge\beta)}\,dr$$
diverges or converges.
\end{itemize}

\item [{\rm (2)}] If $\alpha/d> d/(d+\alpha)$, then, for
any  
$\beta\in (d/(d+\alpha), \alpha/d)$,
almost surely both
$$\limsup_{r\to\infty}\frac{\sup_{|x|\le r}X(t,x)}{f(r)}
=\infty \quad \text {or} \quad\limsup_{r\to\infty}\frac{\sup_{|x|\le r}X(t,x)}{f(r)}=0$$
and
$$\limsup_{r\to\infty}\frac{\sup_{x\in \mathbb Z^d, |x|\le r}X(t,x)}{f(r)}=\infty
\quad \text {or} \quad\limsup_{r\to\infty}\frac{\sup_{x\in \mathbb Z^d, |x|\le r}X(t,x)}{f(r)}=0$$ according to whether the integral
$$\int_1^\infty r^{d-1}f(r)^{-\beta}\,dr$$
diverges or converges.
\end{itemize}
\end{thm}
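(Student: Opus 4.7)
Because \eqref{e:levym} puts the L\'evy measure on $(1,\infty)$, the mild solution \eqref{mild---} reduces to $X(t,x)=mt+X_{\rm j}(t,x)$, where
\[
X_{\rm j}(t,x)=\int_{(0,t]\times\R^d\times(1,\infty)} p_{t-s}(x-y)\,z\,\mu(\d s\,\d y\,\d z)
\]
is a nonnegative shot-noise Poisson functional. Since $mt$ is constant in $x$, the task reduces to analysing the $\limsup$ of $\sup_{x\in A_r}X_{\rm j}(t,x)/f(r)$, where $A_r$ is either the ball of radius $r$ or its intersection with $\mathbb Z^d$. My plan rests on a sharp tail estimate for this supremum on a fixed unit box, combined with a covering / Borel--Cantelli argument along dyadic scales.

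\noindent\textbf{Step 1 (unit-box tail).} By spatial stationarity of $X_{\rm j}$, I will first compute the tail of $M_A:=\sup_{x\in A}X_{\rm j}(t,x)$ for $A$ a fixed unit ball or its lattice intersection. For such heavy-tailed shot-noise, the principle of a single big jump gives, to leading order,
\[
\Pp(M_A>u)\asymp \Ee\,\mu\bigl\{(s,y,z): \max_{x\in A}p_{t-s}(x-y)\,z>u\bigr\}.
\]
Using the two-sided estimate $p_\tau(w)\asymp \tau^{-d/\alpha}\wedge \tau|w|^{-d-\alpha}$ together with $\lambda((z,\infty))= z^{-\beta}$ for $z>1$, I will split $\tau=t-s$ into the regimes $\tau<u^{-\alpha/d}$ (where the peak $p_\tau(0)$ exceeds $u$) and $\tau>u^{-\alpha/d}$, and compute the expectation by splitting $y$ into near- and far-field regions relative to $\tau^{1/\alpha}$. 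The outcomes are
\[
\Pp(M_A>u)\asymp
\begin{cases}
u^{-\alpha/d}, & A=\text{ball},\ \beta>\alpha/d,\\
u^{-((1+\alpha/d)\wedge\beta)}, & A=\text{lattice},\ \beta>\alpha/d,\\
u^{-\beta}, & \beta<\alpha/d.
\end{cases}
\]
Heuristically: the rate $\alpha/d$ comes from moderate jumps arriving just before time $t$ in the peak of the kernel, with the continuous sup picking up every peak; on the lattice the peak is accessible only on the $\tau^{d/\alpha}$-fraction of each cell lying within $\tau^{1/\alpha}$ of a lattice point, costing an extra factor $u^{-1}$ and yielding $u^{-(1+\alpha/d)}$; the competing rate $u^{-\beta}$ is always the probability that a single jump of size $>u$ lands in the box.

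\noindent\textbf{Step 2 (global to local).} To upgrade to $\sup_{|x|\le r}X_{\rm j}$, I will tile $\{|x|\le r\}$ by $\asymp r^d$ unit cubes and, for each cube, truncate the contribution of Poisson points whose space coordinate lies outside a bounded neighbourhood of the cube; the heat-kernel tail $\tau|w|^{-d-\alpha}$ will make this remote part uniformly negligible in all cubes simultaneously. Well-separated cubes then carry independent local fields, so that $\Pp(\sup_{|x|\le r}X_{\rm j}>f(r))\asymp r^d f(r)^{-\gamma}$ with $\gamma$ from Step~1. A first Borel--Cantelli along $r_n=2^n$ handles the convergence side, and a Paley--Zygmund second-moment argument applied to an independent sub-collection of cubes yields the divergence side. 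A standard monotone comparison converts $\sum_n 2^{nd}f(2^n)^{-\gamma}$ into $\int_1^\infty r^{d-1}f(r)^{-\gamma}\,\d r$, delivering the integral test.

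\noindent\textbf{Main obstacle.} The hardest part is the sharp matching lower bound in Step~1. The upper bound $\Pp(M_A>u)\lesssim \Ee N(u)$ is essentially immediate, but the lower bound requires that the two mechanisms contributing to $\Ee N(u)$ (near-field clusters of moderate jumps versus far-field giant jumps) be realised on essentially disjoint subsets of the Poisson point configuration, so that they can be disentangled and bounded from below separately. This separation is precisely what fails at the critical value $\beta=1+\alpha/d$: both mechanisms produce contributions of the same order, a $\log u$ correction enters the tail of $M_A$, and no monotone integral test on $f$ alone can describe the almost-sure behaviour; this is why that value is explicitly excluded from case~(1)(ii).
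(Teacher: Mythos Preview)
Your overall architecture---unit-box tail asymptotics plus a covering/Borel--Cantelli argument---matches the paper's, and your computed tail exponents $\alpha/d$, $(1+\alpha/d)\wedge\beta$, $\beta$ are correct. But two points deserve correction, and your divergence argument differs from the paper's in a way worth noting.

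\textbf{The difficulty in Step~1 is reversed.} You write that the upper bound $\Pp(M_A>u)\lesssim\Ee N(u)$ is ``essentially immediate'' while the matching lower bound is the obstacle. It is the other way around. Since $X_{\rm j}$ is a sum of nonnegative terms, any single atom with $\max_{x\in A}p_{t-s}(x-y)z>u$ already forces $M_A>u$; hence $\{N(u)\ge1\}\subset\{M_A>u\}$ and the lower bound $\Pp(M_A>u)\ge 1-e^{-\Ee N(u)}\sim\Ee N(u)$ is trivial. The nontrivial direction is the upper bound: one must rule out that many sub-threshold atoms conspire to push $M_A$ above $u$. The paper handles this by showing that the L\'evy measure $\eta_A$ of the infinitely divisible field has a subexponential (indeed extended-regularly-varying) tail and then invoking the Rosi\'nski--Samorodnitsky theorem \cite{RS93} to get $\Pp(M_A>u)\sim\overline{\eta_A}(u)$; see Proposition~\ref{thm:sup-tail}, Lemma~\ref{lem:tau-finite} and Theorem~\ref{cor:sup-tail}. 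Your ``disentangling mechanisms on disjoint Poisson subsets'' remark is aimed at the wrong direction and would not be needed.

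\textbf{Divergence side: a simpler route than Paley--Zygmund.} The paper avoids truncation and second moments entirely on the divergence side. Because $X_{\rm j}\ge0$, for each annulus $B(n,n+1)$ one has
\[
\sup_{x\in B(n,n+1)}X_{\rm j}(t,x)\ \ge\ g(0)\,\sup\Bigl\{\zeta/(t-\tau)^{d/\alpha}:\ (\tau,\eta,\zeta)\in\mu,\ \eta\in B(n,n+1)\Bigr\},
\]
and the events $\{\mu(T_n)\ge1\}$ with $T_n\subset(0,t]\times B(n,n+1)\times(0,\infty)$ are independent across $n$ because the annuli are disjoint. Second Borel--Cantelli applies directly (see the proof of Theorem~\ref{thm:limsup-whole}). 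Your proposed truncation-plus-Paley--Zygmund scheme is workable in principle, but the claim that the far-field remainder is ``uniformly negligible in all cubes simultaneously'' is itself a nontrivial estimate that the paper never needs for the lower bound.

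\textbf{The exclusion of $\beta=1+\alpha/d$.} Your explanation that ``no monotone integral test on $f$ alone can describe the almost-sure behaviour'' is incorrect. At $\beta=1+\alpha/d$ one has $\overline{\eta}(r)\asymp\overline{\eta_0}(r)\asymp r^{-(1+\alpha/d)}\log r$ (Lemma~\ref{lem:eta-express-2}(ii-c) and Lemma~\ref{lem:eta-eta_0}), and the paper shows in Subsection~\ref{S:example}(2)(b) that the dichotomy still holds, governed by $\int_1^\infty r^{d-1}f(r)^{-(1+\alpha/d)}\log f(r)\,\d r$. The value is excluded from Theorem~\ref{thm1} only so that the test can be stated as a pure power of $f$.
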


Theorem \ref{thm1}(1) (resp.\ Theorem \ref{thm1}(2)) corresponds
to \cite[Theorem A and Theorem C(i)]{CK23} (resp.\ \cite[Theorem B(i) and Theorem C(iv)]{CK23})
for the stochastic heat equation (that is, $\alpha=2$ in \eqref{eq:fractional-she1})
driven by non-Gaussian L\'evy white noise.
As already pointed out in \cite{CK22,CK23},
the behaviors of $X(t,x)$ with non-Gaussian L\'evy white noise are completely different
from these with Gaussian white noise.
In particular, the latter processes obey the almost-sure growth rate with finite limit
as i.i.d. Gaussian random variables and their maxima (\cite[see (6.3)]{KKX17}),
while in the non-Gaussian
setting,
the spatial asymptotics of the solution are governed
by an integral test with zero-infinity limit.
The integral tests for the almost-sure growth rate with zero-infinity limit are well known
for path properties of heavy-tailed L\'evy processes, see e.g. \cite{KKW}.

Theorem \ref{thm1} indicates that
the fractional stochastic heat equation \eqref{eq:fractional-she1} with additive L\'evy white noise enjoys
the so-called additive physical intermittent property in all dimensions,
in particular when the L\'evy white noise $\Lambda$ is sufficiently light-tailed.
The readers are referred to \cite{CK20} and the references therein
for the notation and the background of the physical intermittency
for the stochastic heat equations with L\'evy white noise.
To clearly state the different spatial asymptotic behaviors of fractional stochastic heat equations
driven by additive L\'evy white noise on the whole space $\R^d$ and the lattice $ \mathbb Z^d$,
we consider the following example.

\begin{exam}
Let $ \Lambda$ be the L\'evy white noise such
that its associated L\'evy measure satisfies \eqref{e:levym} with $\beta>1+\alpha/d$.
Then, almost surely the following hold:
\begin{itemize}
\item[{\rm (i)}]
if $p>d/\alpha$, then
$$\limsup_{r\to\infty}\frac{\sup_{|x|\le r}X(t,x)}{r^{d^2/\alpha}(\log r)^p}=0;$$
if $0\le p\le d/\alpha$, then
$$
\limsup_{r\to\infty}\frac{\sup_{|x|\le r}X(t,x)}{r^{d^2/\alpha}(\log r)^p}=\infty.
$$
\item[{\rm (ii)}] if $p>d/(d+\alpha)$, then
$$\limsup_{r\to\infty}\frac{\sup_{x\in \mathbb Z^d, |x|\le r}X(t,x)}{r^{d^2/(d+\alpha)}(\log r)^p}=0;$$
if $0\le p\le d/(d+\alpha)$, then
$$\limsup_{r\to\infty}\frac{\sup_{x\in \mathbb Z^d, |x|\le r}X(t,x)}{r^{d^2/(d+\alpha)}(\log r)^p}=\infty.$$
\end{itemize}
\end{exam}

\begin{rem}\rm
The proof of Theorem \ref{thm1}, as well as the proofs of general results in Section \ref{section5},
mainly follow from the arguments in \cite{CK22} for the stochastic heat equation driven by additive L\'evy white noise.
The main difference is due to the expression of the mild solution given by \eqref{mild---},
where the heat kernel $p_t(x,y)$ of the fractional Laplacian now becomes polynomial decaying
instead of the exponential decaying like the Gaussian estimate.
Just because of this difference,
necessary and sufficient
conditions for the almost surely finiteness of the solution $X(t,x)$
to \eqref{eq:fractional-she1} are in contrast to these in \cite{CK22}; that is,
\begin{equation}\label{e:condition1}
\int_{(1,\infty)}z^{d/(d+\alpha)}\,\lambda(\d z)<\infty
\end{equation}
instead of
$$\int_{(1,\infty)}(\log z)^{d/2}\,\lambda(\d z)<\infty$$
in \cite[(1.7)]{CK22}.
This also explains the reason why we require $\beta>d/(d+\alpha)$ in Theorem \ref{thm1}.
Thus, the main difficulties and the novelties of the paper are follows.
\begin{itemize}
\item[{\rm (i)}]
Besides the different decaying property with the Gaussian estimates as mentioned above,
the full expression of the heat kernel $p_t(x,y)$ for the fractional Laplacian
$-(-\Delta)^{\alpha/2}$ 
with all $\alpha\in (0,2)$ is not available.
This will bring a few
difficulties  
in the arguments for considering the asymptotic behaviors of fractional stochastic heat equations, as compared with the paper \cite{CK22}.
In particular, our arguments here make full use of the asymptotic properties of the heat kernel $p_t(x,y)$.
\item[{\rm(ii)}]
For the stochastic heat equation and the fractional one,
the tail behaviors of the mild solutions follow those of the associated L\'evy measures,
but they are different from each other
(see Theorem \ref{lem:tau}, Lemma \ref{lem:eta-express-2} and
\cite[Theorem 2.4 and Lemma 2.2]{CK22}).
This is also the case for the tail behaviors of the local supremum of the mild solutions
(see Lemma \ref{lem:tau}, Proposition \ref{prop:tau-tail}  and \cite[Lemma 3.2 and Theorem 3.3]{CK22}).
\item[{\rm(iii)}]
To relate the tail asymptotics of the mild solution or its local supremum
with that
of the L\'evy measure $\lambda$,
one needs much more effort to overcome the restriction arising from \eqref{e:condition1}.
See the proofs of Lemma \ref{lem:eta-tail} and \cite[Lemma 3.4]{CK22}.
\item[{\rm(iv)}]
In order to get tight integral tests for spatial asymptotic behaviors of fractional stochastic heat equations
driven by additive L\'evy white noise,
we will make full use of the moments of the martingale part
in the decomposition of the solution (see $X_2(t,x)$ in \eqref{eq:x-decomposition}).
This idea enables
us to improve parts of the results and the arguments in \cite[Section 4]{CK22}.
In particular, we
can also
obtain conclusions for fractional stochastic heat equations
driven by additive L\'evy white noise on the whole space $\R^d$ when $\beta=\alpha/d$,
and on the lattice $ \mathbb Z^d$ when $\beta=1+\alpha/d$ in the framework of Theorem \ref{thm1}. See Subsection \ref{S:example} for the details.
\end{itemize}
  \end{rem}

Furthermore, it is easily seen from the arguments of our paper that
the results above still hold for the following nonlinear fractional stochastic heat equation on $\R^d$
driven by L\'evy space-time white noise $\Lambda(\d t,\d x)$ with zero initial condition:
\begin{equation}\label{e:levy1}
\left\{
\begin{aligned}
\frac{\partial X}{\partial t}(t,x)
&=-(-\Delta)^{\alpha/2}X(t,x)+\sigma(X(t,x))\dot{\Lambda}(t,x), \quad (t,x)\in (0,\infty)\times \R^d, \\
X(0,x)&=0, \quad x\in \R^d,
\end{aligned}
\right.
\end{equation}
where $\sigma:\R\to (0,\infty)$ is a Lipschitz continuous function,
and it is bounded away from $0$ and infinity.
We defer to Subsection \ref{subsect:nonlinear} for the validity of the assertion above.
See \cite{CK20} for related works about the almost-sure
long time asymptotics for a fixed spatial point
of the solution to the stochastic heat equation
driven by a L\'evy space-time white noise.
It should be interesting to investigate the spatial asymptotics of the solution for
the fractional stochastic heat equation driven by
multiplicative L\'evy noise (i.e., $\sigma(x)=x$ in \eqref{e:levy1} and $X(0,x)=1$);
see \cite{CK23} for the stochastic heat equation driven by L\'evy space-time white noise and
\cite{BC} for the intermittency of stochastic heat equations with multiplicative L\'evy noise.
In connection with the fractional stochastic heat equation driven by
multiplicative L\'evy noise,
Berger-Lacoin \cite{BL22} refer to possible conditions (\cite[(2.30) and (2.31)]{BL22}),
which are similar to \eqref{e:condition1} and \eqref{eq:small-conv-0},
for the nondegeneracy of the partition function of the long range directed polymer
in L\'evy noise.
See \cite[Subsection 2.5.3 (A)]{BL22} for details.
\ \

The remainder of this paper is
organized
as follows.
In the next section, we establish
necessary and sufficient
conditions for the existence of the almost surely finiteness of the mild solution $X(t,x)$
to the fractional stochastic heat equation \eqref{eq:fractional-she1}.
In particular, we claim that the solution $X(t,x)$ is an infinitely divisible random variable,
and give the explicit expression of the characteristic function for it.
In Section \ref{section3},
we present the tail asymptotics of the solution $X(t,x)$ in terms of its associated L\'evy measure $\eta$.
In particular, we claim that the tail $\overline{\eta}$ of the L\'evy measure $\eta$ is
of extended regular variation at infinity.
Here we also directly relate the tail $\overline{\eta}$ with
the tail $\overline{\lambda}$ of the L\'evy measure $\lambda$.
Section \ref{section4} is devoted to the tail asymptotics of  the spatial supremum $\sup_{x\in A}X(t,x)$
to the solution $X(t,x)$ on some $A\in {\cal B}(\R^d)$.
For this,
we will verify that the solution $X(t,x)$ has a locally bounded and continuous modification.
With the aid of this, we can make a bridge between the spatial supremum $\sup_{x\in A}X(t,x)$
to the solution and the tail behaviors of modified L\'evy measures
$\eta_A$.
With all the conclusions at hand, we then give general results for spatial asymptotic behaviors of fractional stochastic heat equations
driven by additive L\'evy white noise on the whole space $\R^d$ or on the lattice $ \mathbb Z^d$ in Section \ref{section5}.

The notation $f\asymp g$ means that there is a constant $c\ge1$ such that
$c^{-1}f\le g\le c f$, and  $f\sim g$ means that $\lim_{r\to \infty}f(r)/g(r)=1$.
Furthermore, $f\preceq g$ (resp.\ $f\succeq g$) means that
there is a constant $c>0$ such that $f\le c g$ (resp.\ $f\ge cg$).

\section{Existence of the mild solution}
We first formulate the L\'evy space-time white noise.
Let $\lambda(\d z)$ be a
nontrivial
measure on ${\cal B}((0,\infty))$ such that
$\int_{(0,\infty)}(1\wedge |z|^2)\,\lambda(\d z)<\infty$.
Let $\nu$ be a measure on ${\cal B}((0,\infty))\otimes {\cal B}(\R^d) \otimes {\cal B}((0,\infty))$ defined by
$\nu(\d t\, \d x\, \d z)=\d t \otimes \d x \otimes \lambda({\rm d}z)$.
Let $\mu$ denote the Poisson random measure
on ${\cal B}((0,\infty))\otimes {\cal B}(\R^d) \otimes {\cal B}((0,\infty))$
with intensity measure $\nu$; that is,
\[
P(\mu(A)=n)=e^{-\nu(A)}\frac{\nu(A)^n}{n!},
\quad n=0,1,2,\dots, \ A\in {\cal B}((0,\infty))\otimes {\cal B}(\R^d)\otimes {\cal B}((0,\infty)).
\]
Define the L\'evy space-time white noise as a measure $\Lambda$
on ${\cal B}((0,\infty))\otimes {\cal B}(\R^d)$:
\[
\Lambda(\d t\, \d x)
=m\,\d t\,\d x+\int_{(0,1]}z\,(\mu-\nu)(\d t\, \d x\, \d z)
+\int_{(1,\infty)}z\,\mu(\d t\, \d x\, \d z),
\]
where $m\in \R^d$.

For $\alpha\in(0,2)$,
let $p_t(x,y)$ be the density function of the transition probability density
for the
(rotationally)
symmetric $\alpha$-stable process on $\R^d$ generated by $-(-\Delta)^{\alpha/2}$.
Then there exists a strictly decreasing smooth function $g:[0,\infty)\to (0,\infty)$ such that
\[
p_t(x,y)=\frac{1}{t^{d/\alpha}}g\left(\frac{|x-y|}{t^{1/\alpha}}\right), \quad t>0, \ x,y\in \R^d
\]
and for some $c_{d,\alpha}>0$,
\begin{equation}\label{eq:g-asymp}
g(r)\sim \frac{c_{d,\alpha}}{r^{d+\alpha}}, \quad r\rightarrow\infty;
\end{equation}
see \cite[Theorem 2.1]{BG60} and \cite[Proof of Lemma 5]{BJ07}.

We can formally define
the fractional stochastic heat equation with zero initial value condition
as follows:
\begin{equation}\label{eq:fractional-she}
\left\{
\begin{aligned}
\frac{\partial X}{\partial t}(t,x)
&=-(-\Delta)^{\alpha/2}X(t,x)+\dot{\Lambda}(t,x), \quad (t,x)\in (0,\infty)\times \R^d, \\
X(0,x)&=0, \quad x\in \R^d.
\end{aligned}
\right.
\end{equation}
The mild solution of \eqref{eq:fractional-she} is defined by
\begin{equation}\label{eq:mild}
X(t,x)=\int_{(0,t]\times \R^d}p_{t-s}(x-y)\,\Lambda(\d s \, \d y).
\end{equation}
When
\begin{equation}\label{eq:first-moment}
\int_{(0,1]}z\,\lambda({\rm d}z)<\infty,
\end{equation}
we
will consider the following
non-compensated version of $X(t,x)$:
\begin{equation}\label{eq:decomp-1}
X(t,x)=m_0t+\int_{(0,t]
\times \R^d\times (0,\infty)
}p_{t-s}(x-y)z\,\mu(\d s \, \d y\,\d z)
\end{equation}
with
$m_0=m-\int_{(0,1]}z\,\lambda(\d z)$.

We next present  necessary and sufficient conditions
for the existence of $X(t,x)$.
Let $\overline{\lambda}(r)=\lambda((r,\infty))$ for $r>0$.
\begin{thm}\label{thm:sol-exist}
For each $(t,x)\in (0,\infty)\times \R^d$,
the mild solution $X(t,x)$ given by \eqref{eq:mild} exists as a finite value $P$-a.s.\ if and only if
\begin{equation}\label{eq:small-conv-0}
\int_{(0,1]}z^{(1+\alpha/d)\wedge2}|\log z|^{{\bf 1}_{\{d=\alpha\}}}\,\lambda({\rm d}z)<\infty
\end{equation}
and
\begin{equation}\label{eq:big-conv}
\int_{(1,\infty)}z^{d/(d+\alpha)}\,\lambda(\d z)<\infty.
\end{equation}
In this case, for any $\theta\in \R$,
\begin{align*}
&E\left[\exp\left(i\theta X(t,x)\right)\right]
=\exp\left(i\theta a
+\int_{(0,\infty)}(e^{i\theta u}-1-i\theta(u\wedge 1))\,\eta(\d u)\right),
\end{align*}
where
\begin{align*}
a=&
(m-\overline{\lambda}(1))t\\
&-\int_{(0,t]\times \R^d\times (0,1]}
p_s(y)z{\bf 1}_{\{p_s(y)z>1\}}\,\d s\,\d y\, \lambda(\d z)
+\int_{(0,t]\times \R^d\times (1,\infty)}
\left\{1\wedge (p_s(y)z)\right\}
\,\d s\,\d y\, \lambda(\d z)
\end{align*}
and
the measure $\eta$ is defined by
\begin{equation}\label{eq:eta}
\eta(B)=\nu(\left\{(s,y,z)\in (0,t]\times \R^d\times (0,\infty)
: p_s(y)z\in B\right\}), \quad B\in {\cal B}((0,\infty)).
\end{equation}

In particular, if \eqref{eq:first-moment} is
satisfied, then
$X(t,x)$ given by \eqref{eq:decomp-1} exists as a finite value $P$-a.s.\ if and only if
\eqref{eq:big-conv} holds. In this case, for any $\theta\in \R$,
\begin{align*}
&E\left[\exp\left(i\theta X(t,x)\right)\right]
=\exp\left(i\theta m_0t
+\int_{(0,\infty)}(e^{i\theta u}-1)\,\eta(\d u)\right).
\end{align*}
\end{thm}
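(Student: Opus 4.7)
The plan is to apply the Lévy--Itô decomposition, writing $X(t,x) = mt + Y_s(t,x) + Y_\ell(t,x)$ where $Y_s = \int_{(0,t]\times\R^d\times(0,1]}p_{t-s}(x-y)z\,(\mu-\nu)$ is the compensated Poisson integral over small jumps and $Y_\ell = \int_{(0,t]\times\R^d\times(1,\infty)}p_{t-s}(x-y)z\,\mu$ is the Poisson sum over large jumps. Standard criteria for stochastic integrals with respect to Lévy space--time white noise (of Rajput--Rosinski type) give that $X(t,x)$ is finite a.s.\ if and only if
\[
\int_0^t\int_{\R^d}\int_0^\infty \bigl((p_s(y)z)^2 \wedge 1\bigr)\,\lambda(\d z)\,\d y\,\d s < \infty,
\]
equivalently $\int_{(0,\infty)}(u^2 \wedge 1)\,\eta(\d u) < \infty$ for the pushforward measure $\eta$ in \eqref{eq:eta}. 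Translation invariance together with the substitution $s \mapsto t-s$ makes this criterion independent of $x$.

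The analytical core of the proof is to show this integrability is equivalent to \eqref{eq:small-conv-0} on $(0,1]$ together with \eqref{eq:big-conv} on $(1,\infty)$. I would use the scaling $p_s(y) = s^{-d/\alpha}g(|y|/s^{1/\alpha})$ and the tail $g(r) \sim c_{d,\alpha}r^{-(d+\alpha)}$ from \eqref{eq:g-asymp}. Substituting $v = y/s^{1/\alpha}$ reduces the inner $y$-integral to $s^{d/\alpha}\int_{\R^d}\bigl((bg(|v|))^2 \wedge 1\bigr)\,\d v$ with $b := zs^{-d/\alpha}$. For $b$ small, the $\wedge 1$ is inactive and this is $z^2 s^{-d/\alpha}\|g\|_2^2$; for $b$ large, both the volume of $\{|v| < g^{-1}(1/b)\}$ and the tail $\int_{|v| > g^{-1}(1/b)}g(|v|)^2\,\d v$ contribute of order $b^{d/(d+\alpha)}$, giving $z^{d/(d+\alpha)} s^{d/(d+\alpha)}$. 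The regimes cross over at $s \asymp z^{\alpha/d}$; integrating in $s \in (0,t]$ then yields
\[
I(z) := \int_0^t\int_{\R^d}\bigl((p_s(y)z)^2 \wedge 1\bigr)\,\d y\,\d s \asymp \begin{cases} z^{d/(d+\alpha)}, & z \ge 1, \\[2pt] z^{(1+\alpha/d)\wedge 2}\,|\log z|^{\mathbf{1}_{\{d=\alpha\}}}, & 0 < z \le 1, \end{cases}
\]
where the logarithm in the marginal case $d = \alpha$ arises from $\int_{z^{\alpha/d}}^{t}s^{-1}\,\d s$. This case analysis is the principal technical obstacle; each subcase reduces to an elementary computation once the scaling is in hand. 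Splitting $\int I(z)\,\lambda(\d z)$ at $z = 1$ then gives the stated equivalence.

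For the characteristic function, independence of the three Lévy--Itô components together with the standard Lévy--Khintchine formulas for (compensated) Poisson integrals gives
\[
\log E\bigl[e^{i\theta X(t,x)}\bigr] = i\theta mt + \int_{z \le 1}\bigl(e^{i\theta p_s(y)z} - 1 - i\theta p_s(y)z\bigr)\,\d\nu + \int_{z > 1}\bigl(e^{i\theta p_s(y)z} - 1\bigr)\,\d\nu,
\]
where $\d\nu = \d s\,\d y\,\lambda(\d z)$ on $(0,t]\times\R^d\times(0,\infty)$. Pushing both Poisson integrals forward under $(s,y,z)\mapsto p_s(y)z$ and replacing the cutoffs (at $z = 1$, or none) by the single soft cutoff $u \wedge 1$ produces the form $i\theta a + \int_{(0,\infty)}(e^{i\theta u} - 1 - i\theta(u\wedge 1))\,\eta(\d u)$; the discrepancy is a linear--in--$\theta$ correction that gives the stated expression for $a$ upon bookkeeping over the sets $\{z \le 1, p_s(y)z > 1\}$ and $\{z > 1, p_s(y)z \le 1\}$. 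Finally, in the special case \eqref{eq:first-moment}, the non--compensated decomposition \eqref{eq:decomp-1} is used: the existence condition becomes $\int\int\int(p_s(y)z\wedge 1)\,\d\nu < \infty$; the small--$z$ contribution is bounded by $t\int_0^1 z\,\lambda(\d z)$ (using $\int p_s = 1$) and is finite by \eqref{eq:first-moment}, while the large--$z$ contribution reduces again to \eqref{eq:big-conv} via the same $z^{d/(d+\alpha)}$ asymptotic, and the characteristic function takes the simpler uncompensated form.
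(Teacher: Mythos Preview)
Your scaling analysis is correct and is essentially the same computation the paper carries out: the substitution $v=y/s^{1/\alpha}$, the crossover at $s\asymp z^{\alpha/d}$, and the resulting asymptotic $I(z)\asymp z^{d/(d+\alpha)}$ for large $z$ and $z^{(1+\alpha/d)\wedge 2}|\log z|^{\mathbf 1_{\{d=\alpha\}}}$ for small $z$ are exactly what the paper obtains through its five--region partition $A_1,A_2,B_0,B_1,B_2$. Your packaging is more compact; the paper's is more explicit but not different in substance.

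There is, however, a genuine gap in your invocation of the existence criterion. The Rajput--Rosinski condition for $\int f\,\d\Lambda$ to exist is \emph{not} just $\int((fz)^2\wedge 1)\,\d\nu<\infty$; it also requires finiteness of the shift term (in the paper's formulation, the two conditions for the compensated piece $X_1$ are $\int_{z\le 1} p_s(y)z\,\mathbf 1_{\{p_s(y)z>1\}}\,\d s\,\d y\,\lambda(\d z)<\infty$ \emph{and} the square--integrability you state, and the uncompensated piece $X_2$ needs $\int_{z>1}(p_s(y)z\wedge 1)\,\d\nu<\infty$, not merely $\int_{z>1}((p_s(y)z)^2\wedge 1)\,\d\nu$). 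Equivalently, you must also check that the drift $a$ in the statement is finite. In this problem the extra conditions are indeed implied---the first is equivalent to $\int_{(0,1]}z^{1+\alpha/d}\,\lambda(\d z)<\infty$, which follows from \eqref{eq:small-conv-0}, and for large $z$ the linear integral $\int(p_s(y)z\wedge 1)\,\d y$ has the \emph{same} $z^{d/(d+\alpha)}$ growth as the quadratic one because both are governed by the volume $|\{v:g(|v|)>1/b\}|\asymp b^{d/(d+\alpha)}$---but you never verify this. Without that verification, the implication ``\eqref{eq:small-conv-0}$+$\eqref{eq:big-conv} $\Rightarrow$ $X(t,x)$ finite'' is not established, since your $(u^2\wedge 1)$--criterion is strictly weaker than what is required.
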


\begin{proof}
We prove Theorem \ref{thm:sol-exist} by following the proof of \cite[Theorem 1.1]{CK22}.

(1) We first discuss the existence of the non-compensated version $X(t,x)$ given by \eqref{eq:decomp-1}.
Let \eqref{eq:first-moment} hold.
Then by \cite[p.\ 43, Theorem 2.7 (i)]{K14},
the integral
\[
\int_{(0,t]\times \R^d\times (0,\infty)}p_{t-s}(x-y)z\,\mu(\d s \, \d y\, \d z)
\]
exists as a finite value  $P$-a.s.\ if and only if
$$
\int_{(0,t]\times \R^d\times (0,\infty)}
\left\{1\wedge (p_{t-s}(x-y)z)\right\}
\, \d s\, \d y\, \lambda(\d z)<\infty,
$$
which is equivalent to saying that
\begin{equation}\label{eq:comp-conv-1}
\int_{(0,t]\times \R^d\times (0,\infty)}
\left\{1\wedge (p_s(y)z)\right\}
\, \d s\, \d y\, \lambda(\d z)<\infty.
\end{equation}

To verify
\eqref{eq:comp-conv-1},
we reveal the condition for $p_s(y)z\le 1$.
Let $M=g(0)$, and let $g^{-1}$ be the inverse function of $g$.
For $s>0$ and $z>0$, define
\[
H_1(z)=\left(Mz\right)^{\alpha/d}, \quad H_2(s,z)=s^{1/\alpha}g^{-1}\left(\frac{s^{d/\alpha}}{z}\right).
\]
Then $p_s(y)z\le 1$ if and only if
$s>H_1(z)$, or
$s\le H_1(z)$ and $|y|\ge H_2(s,z)$.
In particular, if we let $D=t^{d/\alpha}/M$,
then $H_1(z)\le t$ if and only if $z\le D$.
Hence, $s\le t\wedge H_1(z)$ if and only if
$z\le D$ and $s\le H_1(z)$, or $z>D$ and $s\le t$.
To summarize, we see that $p_s(y)z\le 1$ if and only if
either
of the next conditions is satisfied:
\begin{itemize}
\item $H_1(z)<s\le t$;
\item $z\le D$, $s\le H_1(z)$  and $|y|\ge H_2(s,z)$;
\item $z>D$, $s\le t$, and $|y|\ge H_2(s,z)$.
\end{itemize}

We define
\begin{align*}
A_1&=\left\{(s,y,z)\in (0,t]\times \R^d\times (0,\infty) :
z\le D, \, s\le H_1(z),\,  |y|<H_2(s,z) \right\},\\
A_2&=\left\{(s,y,z)\in (0,t]\times \R^d\times (0,\infty) :
z>D, \, |y|<H_2(s,z) \right\}.
\end{align*}
We also define
\begin{align*}
B_0&=\left\{(s,y,z)\in (0,t]\times \R^d\times (0,\infty) : H_1(z)<s\right\},\\
B_1&=\left\{(s,y,z)\in (0,t]\times \R^d\times (0,\infty) :
z\le D, \, s\le H_1(z), \, |y|\ge H_2(s,z) \right\},\\
B_2&=\left\{(s,y,z)\in (0,t]\times \R^d\times (0,\infty) :
z> D, \, |y|\ge H_2(s,z) \right\}.
\end{align*}
Then the sets $A_1$, $A_2$, $B_0$, $B_1$ and $B_2$ form a partition of
$(0,t]\times \R^d\times (0,\infty)$ and
\[
p_s(y)>\frac{1}{z}\iff (s,y,z)\in A_1\cup A_2.
\]

Let $\omega_d$ be the surface area of the unit ball in $\R^d$.
Then
\[
\int_{A_1}
\left\{1\wedge (p_s(y)z)\right\}
\,\d s\,\d y\,\lambda(\d z)
=\int_{A_1}\,\d s\,\d y\,\lambda(\d z)
=\omega_d\int_{(0,D]}\left(\int_0^{H_1(z)}H_2(s,z)^d\, \d s\right)\,\lambda(\d z).
\]
By the change of variables formula with $s=H_1(z)u^{\alpha/d} =(Mzu)^{\alpha/d}$,
we have
\[
\int_0^{H_1(z)}H_2(s,z)^d\, \d s
=\frac{\alpha M^{1+\alpha/d}}{d}z^{1+\alpha/d}\int_0^1g^{-1}(Mu)^du^{\alpha/d}\,\d u,
\]
and thus
\[
\int_{A_1}
\left\{1\wedge (p_s(y)z)\right\}
\,\d s\,\d y\,\lambda(\d z)
=\frac{\alpha \omega_dM^{1+\alpha/d}}{d}
\left(\int_0^1g^{-1}(Mu)^d u^{\alpha/d}\,\d u\right)\int_{(0,D]}z^{1+\alpha/d}\,\lambda(\d z).
\]
The first integral of the last expression above is convergent
because \eqref{eq:g-asymp} yields
\begin{equation}\label{eq:g^{-1}-asymp}
g^{-1}(r)\sim \frac{(c_{d,\alpha})^{1/(d+\alpha)}}{r^{1/(d+\alpha)}},
\quad r\rightarrow  
0.
\end{equation}
Therefore,
\begin{equation}\label{eq:a_1-equiv}
\int_{A_1}
\left\{1\wedge (p_s(y)z)\right\}
\,\d s\,\d y\,\lambda(\d z)<\infty
\iff \int_{(0,1]}z^{1+\alpha/d}\,\lambda(\d z)<\infty.
\end{equation}

In the same way as above,
\[
\int_{A_2}
\left\{1\wedge (p_s(y)z)\right\}
\,\d s\,\d y\,\lambda(\d z)
=\int_{A_2}\,\d s\,\d y\,\lambda(\d z)
=\omega_d\int_{(D,\infty)}\left(\int_0^t H_2(s,z)^d\, \d s\right)\,\lambda(\d z).
\]
Then,
 again
 by the change of variables formula with
$s=H_1(z)u^{\alpha/d}  = (Mu)^{\alpha/d} $,
we have
\[
\int_0^t H_2(s,z)^d\, \d s
=\frac{\alpha M^{1+\alpha/d}}{d}z^{1+\alpha/d}
\int_0^{t^{d/\alpha}/(Mz)}g^{-1}(Mu)^d u^{\alpha/d}\,\d u
,
\]
and so
\begin{align*}
&\int_{(D,\infty)}\left(\int_0^t H_2(s,z)^d\, \d s\right)\,\lambda(\d z)\\
&=\frac{\alpha \omega_d}{d}M^{1+\alpha/d}
\int_{(D,\infty)}
\left(\int_0^{t^{d/\alpha}/(Mz)}g^{-1}(Mu)^du^{\alpha/d}\,\d u\right)z^{1+\alpha/d}\,\lambda(\d z).
\end{align*}
As $D=t^{d/\alpha}/M$,
we see that $z\in (D,2D]$ if and only if $1/2\le t^{d/\alpha}/(Mz)<1$,
which yields
\begin{align*}
&\int_{(D,2D]}\left(\int_0^t H_2(s,z)^d\, \d s\right)\,\lambda(\d z)\\
&=
\frac{\alpha \omega_d}{d}M^{1+\alpha/d}
\int_{(D,2D])}
\left(\int_0^{t^{d/\alpha}/(Mz)}g^{-1}(Mu)^du^{\alpha/d}\,\d u\right)z^{1+\alpha/d}\,\lambda(\d z)\\
&\asymp \lambda((D,2D]).
\end{align*}
We also see that $z>2D$ if and only if $t^{d/\alpha}/(Mz)<1/2$.
Combining this with \eqref{eq:g^{-1}-asymp},
we get
\begin{align*}
&\int_{(2D,\infty)}
\left(\int_0^{t^{d/\alpha}/(Mz)}g^{-1}(Mu)^du^{\alpha/d}\,\d u\right)
z^{1+\alpha/d}\,\lambda(\d z)\\
&\asymp \int_{(2D,\infty)}
\left(\int_0^{t^{d/\alpha}/(Mz)}u^{(\alpha/d)-d/(d+\alpha)}\,\d u\right)
z^{1+\alpha/d}\,\lambda(\d z)
\asymp \int_{(2D,\infty)}z^{d/(d+\alpha)}\, \lambda(\d z).
\end{align*}
As a result, we obtain
\begin{equation}\label{eq:a_2-equiv}
\int_{A_2}
\left\{1\wedge (p_s(y)z)\right\}
\,\d s\,\d y\,\lambda(\d z)<\infty
\iff \int_{(1,\infty)}z^{d/(d+\alpha)}\, \lambda(\d z)<\infty.
\end{equation}

Furthermore, since $\displaystyle\int_{\R^d}p_s(y)\,\d y=1$, we have
\begin{align*}
\int_{B_0}
\left\{1\wedge (p_s(y)z)\right\}
\,\d s\, \,d y \, \lambda(\d z)
=\int_{B_0}p_s(y)z\,\d s\, dy \, \lambda(\d z)
=\int_{(0,D]}\left(t-H_1(z)\right)z\,\lambda(\d z).
\end{align*}
Then
\[
\int_{(0,D]}(t-H_1(z))z\,\lambda(\d z)\le t\int_{(0,D]}z\,\lambda(\d z)
\]
and
\[
\int_{(0,D]}(t-H_1(z))z\,\lambda(\d z)
\ge \int_{(0,D/2^{d/\alpha}]}(t-H_1(z))z\,\lambda(\d z)
\ge \frac{t}{2}\int_{(0,D/2^{d/\alpha}]}z\,\lambda(\d z).
\]
At the last inequality above, we used the fact that
$0<z\le D/2^{d/\alpha}$
 if and only if
$0<H_1(z)\le t/2$.
Thus
\begin{equation}\label{eq:b_0-equiv}
\int_{B_0}
\left\{1\wedge (p_s(y)z)\right\}
\,\d s\,\d y\,\lambda(\d z)<\infty
\iff \int_{(0,1]}z\,\lambda(\d z)<\infty.
\end{equation}

By definition,
$$\int_{B_1}
\left\{1\wedge (p_s(y)z)\right\}
\,\d s\, \d y\,\lambda(\d z)
=\int_{(0,D]}\left(\int_0^{(Mz)^{\alpha/d}}
\int_{|y|\ge H_2(s,z)}p_s(y)\,\d y\,\d s\right)z\,\lambda(\d z).
$$
If $0<z\le D$ and $0\le s\le (Mz)^{\alpha/d}$,
then by the polar coordinate transform and
change of variables formula with $r=s^{1/\alpha}u$,
\begin{align*}
\int_{|y|\ge H_2(s,z)}p_s(y)\,\d y
&=\int_{|y|\ge H_2(s,z)}\frac{1}{s^{d/\alpha}}g\left(\frac{|y|}{s^{1/\alpha}}\right)\,\d y
=\frac{\omega_d}{s^{d/\alpha}}
\int_{H_2(s,z)}^{\infty}g\left(\frac{r}{s^{1/\alpha}}\right)r^{d-1}\,\d r\\
&=\omega_d\int_{H_2(s,z)/s^{1/\alpha}}^{\infty}g(u)u^{d-1}\,\d u
=\omega_d
\int_{g^{-1}(s^{d/\alpha}/z)}^{\infty}g(u)u^{d-1}\,\d u.
\end{align*}
Hence by the Fubini theorem,
\begin{equation}\label{eq:b_1-fubini}
\begin{split}
&\int_0^{(Mz)^{\alpha/d}}
\left(\int_{|y|\ge H_2(s,z)}p_s(y)\,\d y\right)\,\d s
=\omega_d\int_0^{(Mz)^{\alpha/d}}
\left(\int_{g^{-1}(s^{d/\alpha}/z)}^{\infty}g(u)u^{d-1}\,\d u\right)\,{\rm d}s\\
&=\omega_d\int_0^{\infty}
\left(\int_{(zg(u))^{\alpha/d}}^{(Mz)^{\alpha/d}}\,\d s\right)g(u)u^{d-1}
\,\d u
=\omega_d z^{\alpha/d}\int_0^{\infty}(M^{\alpha/d}-g(u)^{\alpha/d})g(u)u^{d-1}\,\d u.
\end{split}
\end{equation}
The last integral above is convergent by \eqref{eq:g-asymp}.
Combining both conclusions above yields that
\[
\int_{B_1}
\left\{1\wedge (p_s(y)z)\right\}
\,\d s\, \d y\,\lambda(\d z)
=\omega_d\int_{(0,D]}z^{1+\alpha/d}\,\lambda(\d z)
\left(\int_0^{\infty}(M^{\alpha/d}-g(u)^{\alpha/d})g(u)u^{d-1}\,\d u\right),
\]
and so
\begin{equation}\label{eq:b_1-equiv}
\int_{B_1}
\left\{1\wedge (p_s(y)z)\right\}
\,\d s\, \d y\,\lambda(\d z)<\infty
\iff \int_{(0,1]}z^{1+\alpha/d}\,\lambda(\d z)<\infty.
\end{equation}

By definition,
\[
\int_{B_2}
\left\{1\wedge (p_s(y)z)\right\}
\,\d s\,\d y\, \lambda(\d z)
=\int_{(D,\infty)}\left(\int_0^t
\int_{|y|\ge H_2(s,z)}p_s(y)\,\d y\, \d s\right)z\,\lambda(\d z).
\]
Then as in \eqref{eq:b_1-fubini}, we have
\[
\int_0^t
\int_{|y|\ge H_2(s,z)}p_s(y)\,\d y\, \d s
=\omega_d\int_{g^{-1}(t^{d/\alpha}/z)}^{\infty}\left(t-(zg(u))^{\alpha/d}\right)g(u)u^{d-1}\,\d u
,
\]
so that
\[
\int_{B_2}
\left\{1\wedge (p_s(y)z)\right\}
\,\d s\,\d y\, \lambda(\d z)
=\omega_d \int_{(D,\infty)}
\left(\int_{g^{-1}(t^{d/\alpha}/z)}^{\infty}\left(t-(zg(u))^{\alpha/d}\right)g(u)u^{d-1}\,\d u\right)
z\,\lambda(\d z).
\]
Note that
\[
\int_{g^{-1}(t^{d/\alpha}/z)}^{\infty}\left(t-(zg(u))^{\alpha/d}\right)g(u)u^{d-1}\,\d u
\le t\int_{g^{-1}(t^{d/\alpha}/z)}^{\infty}g(u)u^{d-1}\,\d u
\]
and
\begin{align*}
\int_{g^{-1}(t^{d/\alpha}/z)}^{\infty}\left(t-(zg(u))^{\alpha/d}\right)g(u)u^{d-1}\,\d u
&\ge \int_{g^{-1}(t^{d/\alpha}/(2^{d/\alpha}z))}^{\infty}\left(t-(zg(u))^{\alpha/d}\right)g(u)u^{d-1}\,\d u\\
&\ge \frac{t}{2}\int_{g^{-1}(t^{d/\alpha}/(2^{d/\alpha}z))}^{\infty}g(u)u^{d-1}\,\d u.
\end{align*}
By \eqref{eq:g-asymp},
we also have for any $c\ge 1$ and $z>D$,
\[
\int_{g^{-1}(t^{d/\alpha}/(cz))}^{\infty}g(u)u^{d-1}\,\d u
\asymp \int_{g^{-1}(t^{d/\alpha}/(cz))}^{\infty}\frac{u^{d-1}}{u^{d+\alpha}}\,\d u
=\frac{1}{\alpha}\left(g^{-1}\left(\frac{t^{d/\alpha}}{cz}\right)\right)^{-\alpha}
\asymp
\frac{1}{z^{\alpha/(d+\alpha)}}.
\]
Therefore, if $z>D$, then
\[
\int_{g^{-1}(t^{d/\alpha}/z)}^{\infty}g(u)u^{d-1}\left(t-(zg(u))^{\alpha/d}\right)\,\d u
\asymp
\frac{1}{z^{\alpha/(d+\alpha)}}.
\]
This implies that
\begin{align*}
&\int_{(
D
,\infty)}
\left(\int_{g^{-1}(t^{d/\alpha}/z)}^{\infty}g(u)u^{d-1}\left(t-(zg(u))^{\alpha/d}\right)\,\d u\right)
z\,\lambda(\d z)\\
&\asymp
\int_{(
D
,\infty)}\frac{1}{z^{\alpha/(d+\alpha)}}
z\,\lambda(\d z)
=\int_{(
D
,\infty)}z^{d/(d+\alpha)}\,\lambda(\d z)
\end{align*}
and thus
\begin{equation}\label{eq:b_2-equiv}
\int_{B_2}
\left\{1\wedge (p_s(y)z)\right\}
\,\d s\, \d y\, \lambda(\d z)<\infty
\iff \int_{(1,\infty)}z^{d/(d+\alpha)}\,\lambda(\d z)<\infty.
\end{equation}

By \eqref{eq:a_1-equiv}, \eqref{eq:a_2-equiv}, \eqref{eq:b_0-equiv},
\eqref{eq:b_1-equiv} and \eqref{eq:b_2-equiv},
we see that, under \eqref{eq:first-moment},
\eqref{eq:comp-conv-1} holds if and only if
\eqref{eq:big-conv} holds.
Moreover, under this condition, it follows by
\cite[p.\ 43, Theorem 2.7 (i)]{K14}
that for any $\theta\in \R$,
\begin{align*}
&E[\exp\left(i\theta
X(t,x)
)\right)]=
E\left[\exp\left(i\theta\int_{(0,t]\times \R^d\times (0,\infty)}p_{t-s}(x-y)z\,\mu(\d s \, \d y \, \d z)\right)\right]\\
&=\exp\left(
\int_{(0,t]\times \R^d\times (0,\infty)}
\left(\exp\left(i\theta p_{t-s}(x-y)z\right)-1\right)\, \d s\, \d y \, \lambda(\d z)\right)\\
&=\exp\left(
\int_{(0,t]\times \R^d\times (0,\infty)}
\left(\exp\left(i\theta p_s(y)z\right)-1\right)\, \d s\, \d y \, \lambda(\d z)\right)
=\exp\left(
\int_{(0,\infty)}
\left(e^{i\theta u}-1\right)\,\eta(\d u)\right),
\end{align*}
where the measure $\eta$ is defined by \eqref{eq:eta}.

(2) We next show
the existence of the compensated version $X(t,x)$ given by \eqref{eq:mild}.
By definition,
\begin{align*}
&X(t,x)\\
&=mt+\int_{(0,t]\times \R^d\times (0,1]}p_{t-s}(x-y)z\,(\mu-\nu)(\d s \, \d y\, \d z)
+\int_{(0,t]\times \R^d\times (1,\infty)}p_{t-s}(x-y)z\,\mu(\d s \, \d y\, \d z)\\
&=mt+X_1(t,x)+X_2(t,x).
\end{align*}
Then by the argument as
in (1),
$X_2(t,x)$ is convergent if and only if \eqref{eq:big-conv} holds.
According to \cite[Theorem 2.7]{RR89},
$X_1(t,x)$ is convergent if and only if
$$\int_{(0,t]\times \R^d\times (0,1]}p_{s}(y)z{\bf 1}_{\{p_s(y)z>1\}}\, \d s\, \d y\, \lambda(\d z)<\infty$$
and
$$\int_{(0,t]\times \R^d\times (0,1]}
(p_{s}(y)z)^2
\, \d s\, \d y\, \lambda(\d z)<\infty.$$

First, we have
\begin{align*}
&\int_{(0,t]\times \R^d\times (0,1]}p_{s}(y)z{\bf 1}_{\{p_s(y)z>1\}}\, \d s\, \d y\, \lambda(\d z)\\
&=\int_{(0,1]}\left\{\int_0^{t\wedge H_1(z)}
\left(\int_{|y|<H_2(s,z)}p_s(y)\,\d y\right)\,\d s\right\}z\,\lambda(\d z)\\
&=\omega_d\int_{(0,1]}\left\{\int_0^{t\wedge (Mz)^{\alpha/d}}\frac{1}{s^{d/\alpha}}
\left(\int_0^{g^{-1}(s^{d/\alpha}/z)s^{1/\alpha}}
g\left(\frac{r}{s^{1/\alpha}}\right)r^{d-1}\,\d r\right)
\,\d s\right\}z\,\lambda(\d z)\\
&=\omega_d\int_{(0,1]}\left\{\int_0^{t\wedge (Mz)^{\alpha/d}}
\left(\int_0^{g^{-1}(s^{d/\alpha}/z)}
g(u)u^{d-1}\,\d u\right)\,\d s\right\} z\,\lambda(\d z).
\end{align*}
At the last equality above,
we used the change of variables formula with $r=s^{1/\alpha}u$.
Then the Fubini theorem yields  for $z\in (0,1]$,
\begin{align*}
\int_0^{t\wedge (Mz)^{\alpha/d}}
\left(\int_0^{g^{-1}(s^{d/\alpha}/z)}g(u)u^{d-1}\,\d u\right)\,\d s
&=\int_0^{\infty}\left(\int_0^{t\wedge (zg(u))^{\alpha/d})}\,\d s\right)g(u)u^{d-1}\,\d u\\
&=\int_0^{\infty}\left(t\wedge (zg(u))^{\alpha/d}\right)g(u)u^{d-1}\,\d u.
\end{align*}
Hence
\begin{align*}
&\int_{(0,1]}\left\{\int_0^{t\wedge (Mz)^{\alpha/d}}
\left(\int_0^{g^{-1}(s^{d/\alpha}/z)}
g(u)u^{d-1}\,\d u\right) \,\d s\right\}z\,\lambda(\d z)\\
&=\int_{(0,1]}\left(
\int_0^{\infty}\left(t\wedge (zg(u))^{\alpha/d}\right)g(u)u^{d-1}\,\d u\right)z\,\lambda(\d z)\\
&=\int_{(0,1\wedge D]}
\left(
\int_0^{\infty}(zg(u))^{\alpha/d}g(u)u^{d-1}\,\d u
\right)
z\,\lambda(\d z)
+t \int_{(1\wedge D,1]}\left(
\int_0^{g^{-1}(t^{d/\alpha}/z)}g(u)u^{d-1}\,\d u\right)z\,\lambda(\d z)\\
&\quad+\int_{(1\wedge D,1]}\left(
\int_{g^{-1}(t^{d/\alpha}/z)}^{\infty}g(u)u^{d-1}(zg(u))^{\alpha/d}\,\d u\right)z\,\lambda(\d z)\\
&=\int_0^{\infty}g(u)^{1+\alpha/d}u^{d-1}\,\d u
\int_{(0,1\wedge D]}z^{1+\alpha/d}\,\lambda(\d z)
+t \int_{(1\wedge D,1]}
\left(
\int_0^{g^{-1}(t^{d/\alpha}/z)}g(u)u^{d-1}\,\d u\right)
z\,\lambda(\d z)\\
&\quad+\int_{(1\wedge D,1]}
\left(
\int_{g^{-1}(t^{d/\alpha}/z)}^{\infty}g(u)^{1+\alpha/d}u^{d-1}\,\d u\right)
z^{1+\alpha/d}\,\lambda(\d z)\\
&={\rm (I)}_1+{\rm (I)}_2+{\rm (I)}_3.
\end{align*}
Since $\int_0^{\infty}g(u)^{1+\alpha/d}u^{d-1}\,\d u$ is convergent by \eqref{eq:g-asymp},
${\rm (I)}_2$ and ${\rm (I)}_3$ are convergent.
Combining all the conclusions above, we get
\begin{equation}\label{eq:pz>1-2}
\begin{split}
\int_{(0,t]\times \R^d\times (0,1]}p_{s}(y)z{\bf 1}_{\{p_s(y)z>1\}}\, \d s\, \d y\, \lambda(\d z)<\infty
&\iff {\rm (I)}_1<\infty\\
&\iff \int_{(0,1]}z^{1+\alpha/d}\,\lambda(\d z)<\infty.
\end{split}
\end{equation}

Let
\begin{align*}
&\int_{(0,t]\times \R^d\times (0,1]}(p_{s}(y)z)^2{\bf 1}_{\{p_s(y)z\le 1\}}\, \d s\, \d y\, \lambda(\d z)\\
&=\int_{B_0\cap((0,t]\times \R^d\times (0,1])}
(p_{s}(y)z)^2{\bf 1}_{\{p_s(y)z\le 1\}}\, \d s\, \d y\, \lambda(\d z)\\
&\quad+\int_{B_1\cap((0,t]\times \R^d\times (0,1])}
(p_s(y)z)^2{\bf 1}_{\{p_s(y)z\le 1\}}\, \d s\, \d y\, \lambda(\d z)\\
&\quad +\int_{B_2\cap((0,t]\times \R^d\times (0,1])}
(p_s(y)z)^2{\bf 1}_{\{p_s(y)z\le 1\}}\, \d s\, \d y\, \lambda(\d z)\\
&={\rm (II)}_1+{\rm (II)}_2+{\rm (II)}_3.
\end{align*}
Then, by the change of variables formula with $r=s^{1/\alpha}u$,
\begin{align*}
{\rm (II)}_1
&=\int_{B_0\cap((0,t]\times \R^d\times (0,1])}
\frac{z^2}{s^{2d/\alpha}}g\left(\frac{|y|}{s^{1/\alpha}}\right)^2 \,\d s\,\d y \, \lambda(\d z)\\
&=\omega_d\int_{0<z\le D\wedge1, \, H_1(z)<s\le t}
\frac{z^2}{s^{2d/\alpha}}\left(\int_0^{\infty}g\left(\frac{r}{s^{1/\alpha}}\right)^2r^{d-1}\,\d r\right)
\,\d s\lambda(\d z)\\
&=\omega_d\int_0^{\infty}g(u)^2u^{d-1}\,\d u
\int_{0<z\le D\wedge1, \, H_1(z)<s\le t}\frac{z^2}{s^{d/\alpha}}\,\d s \lambda(\d z)\\
&=\omega_d \int_0^{\infty}g(u)^2u^{d-1}\,\d u
\int_{(0,D\wedge1]}
\left(\int_{(Mz)^{\alpha/d}}^t\frac{1}{s^{d/\alpha}}\,\d s\right)
z^2\, \lambda(\d z)\\
&=
\begin{dcases}
\frac{\omega_d}{1-d/\alpha}\int_0^{\infty}g(u)^2u^{d-1}\,\d u
\int_{(0,D\wedge1]}(t^{1-d/\alpha}-(Mz)^{{\alpha/d}-1})z^2\,\lambda(\d z), & d\ne\alpha,\\
\omega_d \int_0^{\infty}g(u)^2u^{d-1}\,\d u
\int_{(0,D\wedge1]}z^2 \log\left(\frac{t}{(Mz)^{\alpha/d}}\right)\,\lambda(\d z), & d=\alpha.
\end{dcases}
\end{align*}
Therefore,
\[
{\rm (II)}_1<\infty \iff
\begin{dcases}
\int_{(0,1]}z^2|\log z|^{{\bf 1}_{\{d=\alpha\}}}\,\lambda(\d z)<\infty, & d\le \alpha,\\
\int_{(0,1]}z^{1+\alpha/d}\,\lambda(\d z)<\infty, & d>\alpha.
\end{dcases}
\]

If $0<z\le D\wedge1$,
then by the change of variables formula with $r=us^{1/\alpha}$ and the Fubini theorem,
\begin{align*}
&\int_0^{(Mz)^{\alpha/d}}
\frac{1}{s^{2d/\alpha}}\left(\int_{g^{-1}(s^{d/\alpha}/z)s^{1/\alpha}}^{\infty}g\left(\frac{r}{s^{1/\alpha}}\right)^2
r^{d-1}\,\d u\right)\,\d s\\
&=\int_0^{(Mz)^{\alpha/d}}
\frac{1}{s^{d/\alpha}}\left(\int_{g^{-1}(s^{d/\alpha}/z)}^{\infty}g(u)^2u^{d-1}\,\d u\right)\,\d s
=\int_0^{\infty}g(u)^2
\left(\int_{(g(u)z)^{\alpha/d}}^{(Mz)^{\alpha/d}}\frac{1}{s^{d/\alpha}}\,\d s\right)\, \d u.
\end{align*}
Hence
\begin{align*}
{\rm (II)}_2
&=\int_{B_1\cap ((0,t]\times \R^d\times (0,1])}\frac{z^2}{s^{2d/\alpha}}
g\left(\frac{|y|}{s^{1/\alpha}}\right)^2 \,\d s\,\d y \, \lambda(\d z)\\
&=\int_{(0,D\wedge1]}
\left\{\int_0^{H_1(z)}\frac{1}{s^{2d/\alpha}}
\left(\int_{|y|\ge H_2(s,z)}
g\left(\frac{|y|}{s^{1/\alpha}}\right)^2\,\d y\right) \,\d s\right\}
z^2\,\lambda(\d z)\\
&=\omega_d
\int_{(0,D\wedge1]}  \left\{\int_0^{(Mz)^{\alpha/d}}
\frac{1}{s^{2d/\alpha}}\left(\int_{g^{-1}(s^{d/\alpha}/z)s^{1/\alpha}}^{\infty}
g\left(\frac{r}{s^{1/\alpha}}\right)^2r^{d-1}\,\d r\right)\,\d s\right\}z^2\, \lambda(\d z)\\
&=\omega_d \int_{(0,D\wedge1]}
\left\{\int_0^{\infty}
\left(\int_{(g(u)z)^{\alpha/d}}^{(Mz)^{\alpha/d}}\frac{1}{s^{d/\alpha}}\,\d s\right)g(u)^2\, \d u\right\}
z^2\,\lambda(\d z)\\
&=
\begin{dcases}
\frac{\omega_d}{1-d/\alpha}
\int_0^{\infty} (M^{{\alpha/d}-1}-g(u)^{\alpha/d-1})g(u)^2\,\d u
\int_{(0,D\wedge1]}z^{1+\alpha/d}\,\lambda(\d z),
& d\ne\alpha,\\
\omega_d
\int_0^{\infty}g(u)^2 \log\left(\frac{M}{g(u)}\right)\, \d u
\int_{(0,D\wedge1]}z^2\,\lambda(\d z),
& d=\alpha.
\end{dcases}
\end{align*}
As a result, we obtain
\[
{\rm (II)}_2<\infty
\iff \int_{(0,1]}z^{1+\alpha/d}\,\lambda(\d z)<\infty.
\]

Furthermore,
\begin{align*}
{\rm (II)}_3
&=\int_{B_3\cap((0,t]\times \R^d\times (0,1])}
\frac{z^2}{s^{2d/\alpha}}g\left(\frac{|y|}{s^{1/\alpha}}\right)^2 \,\d s\,\d y \, \lambda(\d z)\\
&=\int_{(D\wedge1,1]}
\left\{\int_0^{t}\frac{1}{s^{2d/\alpha}}
\left(\int_{|y|\ge H_2(s,z)}
g\left(\frac{|y|}{s^{1/\alpha}}\right)^2\,\d y\right) \,\d s\right\}
z^2\,\lambda(\d z)\\
&=\omega_d
\int_{(D\wedge1,1]}  \left\{\int_0^{t}
\frac{1}{s^{2d/\alpha}}\left(\int_{g^{-1}(s^{d/\alpha}/z)s^{1/\alpha}}^{\infty}
g\left(\frac{r}{s^{1/\alpha}}\right)^2r^{d-1}\,\d r\right)\,\d s\right\}z^2\, \lambda(\d z)\\
&=\omega_d \int_{(D\wedge1,1]}
\left\{\int_0^{\infty}
\left(\int_{(g(u)z)^{\alpha/d}}^{t}\frac{1}{s^{d/\alpha}}\,\d s\right)g(u)^2\, \d u\right\}
z^2\,\lambda(\d z).
\end{align*}
We can calculate the last integral above as follows:
if $d<\alpha$, then
\[
{\rm (II)}_3
=\frac{d\omega_d}{\alpha-d}
\int_{(D\wedge1,1]}\left(\int_0^{\infty}(t^{1-d/\alpha}-(g(u)z)^{\alpha/d-1})g(u)^2\,\d u\right)z^2\lambda(\d z)
\preceq \int_{(0,1]}z^2\,\lambda(\d z).
\]
If $d>\alpha$, then
\[
{\rm (II)}_3
=\frac{d\omega_d}{d-\alpha}
\int_{(D\wedge1,1]}\left(\int_0^{\infty}((g(u)z)^{\alpha/d-1}-t^{1-d/\alpha})g(u)^2\,\d u\right)z^2\lambda(\d z)
\preceq \int_{(D\wedge1, 1]}z^{1+\alpha/d}\,\lambda(\d z).
\]
If $d=\alpha$, then
\begin{align*}
{\rm (II)}_3
&=\omega_d
\left(\int_0^{\infty}g(u)^2  \log\left(\frac{t}{g(u)}\right)\, \d u
\int_{(D\wedge1,1]}z^2\,\lambda(\d z)
+\int_0^{\infty}g(u)^2\, \d u
\int_{(D\wedge1,1]}z^2|\log z|\,\lambda(\d z)
\right)\\
&\preceq \int_{(D\wedge1,1]}z^2|\log z|\,\lambda(\d z).
\end{align*}
Hence, we see that
\[
\int_{(0,t]\times\R^d\times (0,1]}(p_s(y)z)^2{\bf 1}_{\{p_s(y)z\le 1\}}
\,\d s\, \d y\,\lambda(\d z)<\infty
\iff
\begin{dcases}
\int_{(0,1]}z^{
(1+\alpha/d)\wedge2}\,\lambda(\d z)<\infty, & d\ne \alpha,\\
\int_{(0,1]}z^2|\log z|\,\lambda(\d z)<\infty, & d=\alpha.
\end{dcases}
\]
Combining this with \eqref{eq:pz>1-2}, we arrive at the desired assertion.
\end{proof}

\section{Tail asymptotics}\label{section3}
In this section, we study
the tail behavior of the mild solution
$X(t,x)$ to
\eqref{eq:fractional-she}
in terms of its L\'evy measure.
Recall that  $\eta$ defined by
\eqref{eq:eta} is the L\'evy measure corresponding to the solution $X(t,x)$.
For $r>0$, let $\overline{\eta}(r)=\eta((r,\infty))$.
We first show basic properties of $\overline{\eta}$.

\begin{lem}\label{lem:eta-express}
The following statements hold.
\begin{enumerate}
\item $\overline{\eta}(r)<\infty$ for any $r>0$ if and only if $\lambda$ satisfies
\begin{equation}\label{eq:small-eta-conv}
\int_{(0,1]}z^{1+\alpha/d}\,\lambda(\d z)<\infty
\end{equation}
and \eqref{eq:big-conv}.

\item Under \eqref{eq:small-eta-conv} and \eqref{eq:big-conv},
$r\mapsto\overline{\eta}(r)$ is continuous and decreasing on $(0,\infty)$, and
\begin{equation}\label{eq:liminf-eta}
\liminf_{r\rightarrow\infty}r^{1+\alpha/d}\overline{\eta}(r)>0.
\end{equation}
Moreover, $\overline{\eta}$ is of extended regular variation at infinity; that is,
there exist positive constants
$\delta_1$ and $\delta_2$
such that for any $\lambda\ge 1$,
\[
\lambda^{\delta_1}
\le \liminf_{t\rightarrow\infty}\frac{\overline{\eta}(\lambda t)}{\overline{\eta}(t)}
\le \limsup_{t\rightarrow\infty}\frac{\overline{\eta}(\lambda t)}{\overline{\eta}(t)}
\le \lambda^{\delta_2}.
\]
\end{enumerate}
\end{lem}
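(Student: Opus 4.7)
The plan is to derive a single explicit representation for $\overline\eta(r)$ and read off all four assertions from it. Using the scaling $p_s(y)=s^{-d/\alpha}g(|y|/s^{1/\alpha})$ with $g$ strictly decreasing, the event $\{p_s(y)z>r\}$ is equivalent to $s\le (Mz/r)^{\alpha/d}$ together with $|y|<s^{1/\alpha}g^{-1}(rs^{d/\alpha}/z)$. Integrating first in $y$ (giving a ball volume) and then substituting $u=rs^{d/\alpha}/z$ in the $s$ variable yields
\begin{equation}\label{eq:eta-formula}
\overline\eta(r)=c_{d,\alpha}\,r^{-1-\alpha/d}\int_{(0,\infty)}z^{1+\alpha/d}\,G\!\bigl(M\wedge (rt^{d/\alpha}/z)\bigr)\,\lambda(\d z),
\end{equation}
where $G(U):=\int_0^U u^{\alpha/d}g^{-1}(u)^d\,\d u$ and $c_{d,\alpha}>0$. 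By \eqref{eq:g^{-1}-asymp}, $G(U)\asymp U^{\gamma}$ as $U\to 0^+$ with $\gamma:=\alpha/d+\alpha/(d+\alpha)$; since $G$ is continuous with $G(M)<\infty$ and $G(U)/U^\gamma$ has a positive limit at $0$, in fact $G(U)\asymp U^\gamma$ on all of $(0,M]$.

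For Part (i), I would split the integral in \eqref{eq:eta-formula} at $z_0(r):=rt^{d/\alpha}/M$. On $(0,z_0]$ the argument of $G$ equals $M$, so the contribution is $c_{d,\alpha}G(M)\,r^{-1-\alpha/d}\int_{(0,z_0]}z^{1+\alpha/d}\,\lambda(\d z)$, finite for each $r>0$ iff \eqref{eq:small-eta-conv} holds. On $(z_0,\infty)$ the argument of $G$ lies in $(0,M)$ and $G(rt^{d/\alpha}/z)\asymp (rt^{d/\alpha}/z)^\gamma$; combining exponents shows this piece is comparable to $t^{d\gamma/\alpha}r^{-d/(d+\alpha)}\int_{(z_0,\infty)}z^{d/(d+\alpha)}\,\lambda(\d z)$, finite iff \eqref{eq:big-conv} holds. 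This settles Part (i).

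Monotonicity of $\overline\eta$ in Part (ii) is immediate from the tail definition, while continuity follows since for each fixed $z>0$ the hypersurface $\{(s,y)\in (0,t]\times\R^d:p_s(y)z=r\}$ has zero Lebesgue measure, hence by Fubini $\eta(\{r\})=0$ for each $r$, and dominated convergence gives continuity. The liminf bound \eqref{eq:liminf-eta} follows by retaining only the $z\le z_0$ piece of \eqref{eq:eta-formula}: $r^{1+\alpha/d}\overline\eta(r)\ge c_{d,\alpha}G(M)\int_{(0,z_0(r)]}z^{1+\alpha/d}\,\lambda(\d z)$, strictly positive for all large $r$ by nontriviality of $\lambda$. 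For the extended regular variation, rewrite \eqref{eq:eta-formula} as $\overline\eta(r)=c_{d,\alpha}r^{-1-\alpha/d}F(r)$ with $F(r):=\int z^{1+\alpha/d}\phi(rt^{d/\alpha}/z)\,\lambda(\d z)$ and $\phi(U):=G(M\wedge U)$ nondecreasing; this immediately gives $F(\lambda r)\ge F(r)$, hence $\overline\eta(\lambda r)/\overline\eta(r)\ge \lambda^{-(1+\alpha/d)}$ for $\lambda\ge 1$. For the matching polynomial upper bound, the key pointwise estimate is $\phi(\lambda u)\le C\lambda^\gamma \phi(u)$ for all $\lambda\ge 1$ and $u>0$, which I would prove by splitting on where $u,\lambda u$ lie relative to $M$: Case (A) $\lambda u\le M$ uses $G(U)\asymp U^\gamma$ on $(0,M]$; Case (B) $u\le M<\lambda u$ bounds $G(M)/G(u)\le G(M)/(cu^\gamma)\le C\lambda^\gamma$ because $u>M/\lambda$; Case (C) $u>M$ gives ratio $1$. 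Integrating against $z^{1+\alpha/d}\,\lambda(\d z)$ yields $F(\lambda r)\le C\lambda^\gamma F(r)$, whence $\overline\eta(\lambda r)/\overline\eta(r)\le C\lambda^{-d/(d+\alpha)}$, producing the polynomial two-sided sandwich.

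The main obstacle will be Case (B) in the pointwise bound on $\phi$: extracting the $\lambda^\gamma$ factor from $G(M)/G(u)$ requires the \emph{lower} polynomial bound $G(u)\ge cu^\gamma$ on the \emph{entire} interval $(0,M]$, not merely the near-zero asymptotic \eqref{eq:g^{-1}-asymp}. This in turn rests on $G(U)/U^\gamma$ being continuous and strictly positive on the compact set $[0,M]$, which follows from the smoothness of $g$ combined with \eqref{eq:g^{-1}-asymp}.
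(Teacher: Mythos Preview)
Your representation \eqref{eq:eta-formula} is correct and equivalent (via Fubini and the substitution $u=rs^{d/\alpha}/z$) to the paper's formula \eqref{eq:eta-expression}; Part~(i) and the continuity/monotonicity/liminf claims of Part~(ii) proceed the same way in both proofs, with the same split at $z_0(r)=rt^{d/\alpha}/M$.

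The genuine difference is in the extended regular variation argument. The paper differentiates: writing $\overline\eta(r)=c\,r^{-(1+\alpha/d)}\Theta(r)$ for an explicit increasing $\Theta$, it computes $r\,\overline\eta'(r)/\overline\eta(r)$, shows this lies in $[-(1+\alpha/d),0]$ by exploiting that the inner $z$-integrand is decreasing in the outer variable, and then invokes the Karamata-type representation theorem \cite[Theorem~2.2.6]{BGT89}. Your route avoids differentiation entirely: monotonicity of $\phi$ gives $F(\lambda r)\ge F(r)$ and hence the clean lower bound $\overline\eta(\lambda r)/\overline\eta(r)\ge\lambda^{-(1+\alpha/d)}$, while the pointwise inequality $\phi(\lambda u)\le C\lambda^\gamma\phi(u)$ furnishes the polynomial upper bound. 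This is more elementary and, as a bonus, yields a nontrivial upper Matuszewska index $-d/(d+\alpha)$, whereas the paper's derivative bound only gives $\delta_2=0$.

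One small point to tidy: your upper bound carries a multiplicative constant $C>1$, whereas the BGT definition of extended regular variation requires $\limsup_{r\to\infty}\overline\eta(\lambda r)/\overline\eta(r)\le\lambda^{\delta_2}$ with no prefactor. The simplest fix is to note that monotonicity of $\overline\eta$ already gives $\overline\eta(\lambda r)/\overline\eta(r)\le 1=\lambda^0$ for all $\lambda\ge1$, so $\delta_2=0$ suffices for ER; your $\phi$-estimate is then additional information (bounded decrease) rather than the source of the ER conclusion itself.
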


It was proved in Theorem \ref{thm:sol-exist} that the mild solution
$X(t,x)$ to the equation
\eqref{eq:fractional-she} exists as a finite value almost surely
if and only if \eqref{eq:small-conv-0} and \eqref{eq:big-conv} are satisfied.
Since these two conditions are stronger than \eqref{eq:small-eta-conv} and \eqref{eq:big-conv},
the almost surely finiteness of $X(t,x)$ implies $\overline{\eta}(r)<\infty$ for any $r>0$.

\begin{proof}[Proof of Lemma $\ref{lem:eta-express}$]
(1)
 We first claim that
for any $r>0$,
\begin{equation}\label{eq:eta-expression}
\begin{split}
\overline{\eta}(r)
&=\omega_d\int_0^t
\left(\int_{(s^{d/\alpha}r/M,\infty)}g^{-1}(s^{d/\alpha}r/z)^d \,\lambda(\d z)\right)
s^{d/\alpha}\,\d s\\
&=\frac{\alpha \omega_d}{d}\frac{1}{r^{1+\alpha/d}}
\int_0^{t^{d/\alpha}r}
\left(\int_{(u/M,\infty)}g^{-1}\left(\frac{u}{z}\right)^d \,\lambda(\d z)\right)
u^{\alpha/d}\,\d u,
\end{split}
\end{equation} where $M=g(0)$. Indeed,
by definition,
\begin{equation}\label{eq:eta-express}
\begin{split}
\overline{\eta}(r)
&=\int_{(0,t]\times \R^d\times (0,\infty)}
{\bf 1}_{\{p_s(y)z>r\}}\,\d s\, \d y\,\lambda(\d z)\\
&=\int_{(0,\infty)}\left\{\int_0^{t\wedge H_1(z/r)}
\left(\int_{|y|<H_2(s,z/r)}\,\d y\right)\,\d s\right\}\,\lambda(\d z)\\
&=\omega_d \int_{(0,\infty)}
\left(\int_0^{t\wedge (Mz/r)^{\alpha/d}}H_2(s,z/r)^d \,\d s\right)\,\lambda(\d z)\\
&=\omega_d \int_{(0,\infty)}
\left(\int_0^{t\wedge (Mz/r)^{\alpha/d}}g^{-1}(s^{d/\alpha}r/z)^d s^{d/\alpha}\,\d s\right)\,\lambda(\d z).
\end{split}
\end{equation}
By the Fubini theorem
and the change of variables formula with $u=s^{d/\alpha}r$,
the last expression above is equal to
\begin{align*}
&\omega_d\int_0^t
\left(\int_{(s^{d/\alpha}r/M,\infty)}g^{-1}(s^{d/\alpha}r/z)^d \,\lambda(\d z)\right)
s^{d/\alpha}\,\d s\\
&=
\frac{\alpha \omega_d}{d}\frac{1}{r^{1+\alpha/d}}
\int_0^{t^{d/\alpha}r}
\left(\int_{(u/M,\infty)}g^{-1}\left(\frac{u}{z}\right)^d \,\lambda(\d z)\right)
u^{\alpha/d}\,\d u.
\end{align*}
Therefore, the proof of \eqref{eq:eta-expression} is complete.

(2) Next, we  verify the assertion (i). By \eqref{eq:g^{-1}-asymp},
we have $g^{-1}(r)\preceq r^{-1/(d+\alpha)}$ for any $r>0$
so that by \eqref{eq:eta-express},
\begin{equation}\label{eq:eta-upper-0}
\begin{split}
\overline{\eta}(r)
&\preceq
\int_0^t
\left(\int_{(s^{d/\alpha}r/M,\infty)}\left(\frac{z}{s^{d/\alpha}r}\right)^{d/(d+\alpha)}\,\lambda(\d z)\right)
\,s^{d/\alpha}\,\d s\\
&=\frac{1}{r^{d/(d+\alpha)}}\int_0^t
\left(\int_{(s^{d/\alpha}r/M,\infty)}z^{d/(d+\alpha)}\,\lambda(\d z)\right)\,s^{d/(d+\alpha)}\,\d s.
\end{split}
\end{equation}
Then by  the Fubini theorem,
\begin{equation}\label{eq:eta-fubini}
\begin{split}
&\int_0^t
\left(\int_{(s^{d/\alpha}r/M,\infty)}z^{d/(d+\alpha)}\,\lambda(\d z)\right)\,s^{d/(d+\alpha)}\,\d s\\
&=\int_{(0,\infty)}\left(\int_0^{t\wedge (Mz/r)^{\alpha/d}}s^{d/(d+\alpha)}\,\d s\right)z^{d/(d+\alpha)}\,\lambda(\d z)\\
&=\frac{d+\alpha}{2d+\alpha}
\int_{(0,\infty)}\left(t\wedge \left(\frac{Mz}{r}\right)^{\alpha/d}\right)^{1+d/(d+\alpha)}z^{d/(d+\alpha)}\,\lambda(\d z)\\
&=\frac{d+\alpha}{2d+\alpha}
\left(\frac{M^{\alpha/d+\alpha/(d+\alpha)}}{r^{\alpha/d+\alpha/(d+\alpha)}}
\int_{(0,rt^{d/\alpha}/M]}z^{1+\alpha/d}\,\lambda(\d z)
+t\int_{(rt^{d/\alpha}/M,\infty)}z^{d/(d+\alpha)}\,\lambda(\d z)\right).
\end{split}
\end{equation}
Namely, we have
\[
\overline{\eta}(r)
\preceq
\frac{1}{r^{1+\alpha/d}}
\int_{(0,rt^{d/\alpha}/M]}z^{1+\alpha/d}\,\lambda(\d z)
+\frac{t}{r^{d/(d+\alpha)}}
\int_{(rt^{d/\alpha}/M,\infty)}z^{d/(d+\alpha)}\,\lambda(\d z).
\]

On the other hand, if $z\ge 2s^{d/\alpha}r/M$, then by \eqref{eq:g^{-1}-asymp},
\[
g^{-1}\left(\frac{s^{d/\alpha}r}{z}\right)^d\asymp \left(\frac{z}{s^{d/\alpha}r}\right)^{d/(d+\alpha)}.
\]
As in \eqref{eq:eta-fubini} we obtain
\begin{equation}\label{eq:eta-lower-0}
\begin{split}
\overline{\eta}(r)
&\ge  \frac{\omega_d}{r^{d/(d+\alpha)}}\int_{(0,\infty)}
\left(\int_0^{t\wedge (Mz/(2r))^{\alpha/d}}g^{-1}\left(\frac{s^{d/\alpha}r}{z}\right)^d
s^{d/\alpha}\,\d s\right)
\,\lambda(\d z)\\
&\asymp \frac{1}{r^{1+\alpha/d}}
\int_{(0,2rt^{d/\alpha}/M]}z^{1+\alpha/d}\,\lambda(\d z)
+\frac{t}{r^{d/(d+\alpha)}}
\int_{(2rt^{d/\alpha}/M,\infty)}z^{d/(d+\alpha)}\,\lambda(\d z).
\end{split}
\end{equation}
Therefore, $\overline{\eta}(r)<\infty$ for any $r>0$
if and only if $\lambda$ satisfies
\eqref{eq:small-eta-conv} and \eqref{eq:big-conv}.

(3) By the Fubini theorem, we have
\begin{align*}
\overline{\eta}(r)
&=\int_{(0,t]\times \R^d\times (0,\infty)}
{\bf 1}_{\{p_s(y)z>r\}}\,\d s\, \d y\,\lambda(\d z)\\
&=\int_{(0,t]\times \R^d\times (0,\infty)}
{\bf 1}_{\{p_s(y)z\ge r\}}\,\d s\, \d y\,\lambda(\d z)
=\eta([r,\infty)),
\end{align*}
so that $\overline{\eta}(r)$ is continuous on $(0,\infty)$.
The decreasing property of $\overline{\eta}(r)$ is obvious by the definition, and
\eqref{eq:liminf-eta}
is a direct consequence of \eqref{eq:eta-expression}.

(4)  By the fundamental theorem of calculus,
\[
\log \overline{\eta}(r)
=\log\overline{\eta}(1)+\int_1^r \frac{\overline{\eta}'(s)}{\overline{\eta}(s)}\,\d s.
\]
Then by the change of variables formula with $rs^{d/\alpha}=t^{d/\alpha}v$ in \eqref{eq:eta-expression}, we have
$$\overline{\eta}(r)
=\frac{\alpha \omega_d}{d}\frac{t^{1+d/\alpha}}{r^{1+\alpha/d}}
\int_0^{r}
\left(\int_{(vt^{d/\alpha}/M,\infty)}g^{-1}\left(\frac{vt^{d/\alpha}}{z}\right)^d \,\lambda(\d z)\right)
v^{\alpha/d}\,\d v. $$
Since
\begin{align*}
\overline{\eta}'(r)
&=\frac{\alpha\omega_d}{d}t^{1+d/\alpha}
\Biggl\{-\frac{1+\alpha/d}{r^{2+\alpha/d}}
\int_0^r
\left(\int_{(t^{d/\alpha}v/M,\infty)}
g^{-1}\left(\frac{t^{d/\alpha}v}{z}\right)^d\,\lambda(\d z)\right)
v^{\alpha/d}\,\d v\\
&\qquad\qquad\qquad+\frac{1}{r}\int_{(t^{d/\alpha}r/M,\infty)}g^{-1}\left(\frac{t^{d/\alpha}r}{z}\right)^d\,\lambda(\d z)
\Biggr\}\\
&=\frac{\alpha \omega_d t^{1+d/\alpha}}{dr^{2+\alpha/d}}
\Biggl\{
r^{1+\alpha/d}\int_{(t^{d/\alpha}r/M,\infty)}g^{-1}\left(\frac{t^{d/\alpha}r}{z}\right)^d\,\lambda(\d z)\\
&\qquad\qquad\qquad-\left(1+\frac{\alpha}{d}\right)
\int_0^r
\left(\int_{(t^{d/\alpha}v/M,\infty)}g^{-1}\left(\frac{t^{d/\alpha}v}{z}\right)^d\,\lambda(\d z)\right)
v^{\alpha/d}\,\d v
\Biggr\},
\end{align*}
we have
\begin{equation}\label{eq:eta-deri}
\begin{split}
\frac{\overline{\eta}'(r)}{\overline{\eta}(r)}
&=\frac{1}{r}
\left\{
\frac{
r^{1+\alpha/d}
\int_{(t^{d/\alpha}r/M,\infty)} g^{-1}(t^{d/\alpha}r/z)^d\,\lambda(\d z)}
{\int_0^r \left(\int_{(t^{d/\alpha}v/M,\infty)}g^{-1}(t^{d/\alpha}v/z)^d\,\lambda(\d z)\right)
v^{\alpha/d}
\,\d v}
-\left(1+\frac{\alpha}{d}\right)\right\}.
\end{split}
\end{equation}
Since the function
\[
f(r)=\int_{(t^{d/\alpha}r/M,\infty)} g^{-1}\left(\frac{t^{d/\alpha}r}{z}\right)^d\,\lambda(\d z)
\]
is decreasing in $r$, we obtain
\begin{align*}
&\int_0^r
\left(\int_{(t^{d/\alpha}v/M,\infty)}g^{-1}\left(\frac{t^{d/\alpha}v}{z}\right)^d\,\lambda(\d z)\right)
v^{\alpha/d}\,\d v\\
&\ge \int_0^r \left(\int_{(t^{d/\alpha}r/M,\infty)}
g^{-1}\left(\frac{t^{d/\alpha}r}{z}\right)^d\,\lambda(\d z)\right)
v^{\alpha/d}\,\d v
=\frac{r^{1+\alpha/d}}{1+\alpha/d}\int_{(t^{d/\alpha}r/M,\infty)}
g^{-1}\left(\frac{t^{d/\alpha}r}{z}\right)^d\,\lambda(\d z)
\end{align*}
and thus
$$\frac{\overline{\eta}'(r)}{\overline{\eta}(r)}\le 0,
\quad r\ge 1. $$
We also see by \eqref{eq:eta-deri} that
$$\frac{\overline{\eta}'(r)}{\overline{\eta}(r)}\ge -\frac{1}{r}\left(1+\frac{\alpha}{d}\right),
\quad r\ge 1. $$
Then,
the function $h(s):=
s\overline{\eta}'(s)/\overline{\eta}(s)$
is bounded on $[1,\infty)$.
Moreover,  since
\[
\int_1^r \frac{\overline{\eta}'(s)}{\overline{\eta}(s)}\,\d s
=\int_1^r \frac{h(s)}{s}\,\d s,
\]
it follows by \cite[p.\ 74, Theorem 2.2.6]{BGT89} that
$\overline{\eta}$ is of extended regularly variation at infinity (see \cite[p.\ 65, Definition]{BGT89}
or \cite[p.\ 66, Theorem 2.0.7]{BGT89}).
\end{proof}

Let $(\tau_i,\eta_i,\zeta_i)\in (0,\infty)\times \R^d\times (0,\infty) \ (i\ge 1)$
be a realization of the points associated with the Poisson random measure
$\mu$.
For $t\ge 0$, we define
the following largest contribution to the process $X(t,x)$ by a single atom:
\[
\overline{X}(t)=\sup_{i\ge 1, \, \tau_i\le t}p_{t-\tau_i}(\eta_i)\zeta_i.
\]

We now prove that
the tail behavior of the mild solution $X(t,x)$ to \eqref{eq:fractional-she}
is determined by $\overline{\eta}$, and also dominated by $\overline{X}(t)$.
\begin{thm}\label{thm:eta-tail}
Suppose that \eqref{eq:small-conv-0} and \eqref{eq:big-conv} are satisfied.
Then, for
any
$t>0$ and $x\in \R^d$,
\[
P(X(t,x)>r)\sim P(\overline{X}(t)>r)\sim \overline{\eta}(r),
\quad r\rightarrow\infty.
\]
\end{thm}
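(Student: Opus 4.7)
The plan is to prove the two asymptotic equivalences separately, starting with the easier one. The relation $P(\overline{X}(t)>r)\sim\overline{\eta}(r)$ is immediate from the Poisson structure: the counting variable
\[
N_r:=\mu\!\left(\{(s,y,z)\in(0,t]\times\R^d\times(0,\infty): p_{t-s}(x-y)z>r\}\right)
\]
is Poisson-distributed with mean $\overline{\eta}(r)$, so $P(\overline{X}(t)>r)=P(N_r\ge 1)=1-e^{-\overline{\eta}(r)}$, and since $\overline{\eta}(r)\to 0$ by Lemma \ref{lem:eta-express}, the right-hand side is asymptotic to $\overline{\eta}(r)$.

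For the deeper relation $P(X(t,x)>r)\sim\overline{\eta}(r)$, I would fix a threshold $K>1$ and perform a big-jump / small-jump decomposition relative to the product $p_{t-s}(x-y)z$ rather than $z$ alone. Writing $X(t,x)=X^{>K}(t,x)+X^{\le K}(t,x)$, the piece
\[
X^{>K}(t,x)=\int_{(0,t]\times\R^d\times(0,\infty)}p_{t-s}(x-y)z\,\mathbf{1}_{\{p_{t-s}(x-y)z>K\}}\,\mu(\d s\,\d y\,\d z)
\]
is a compound Poisson sum whose count is Poisson with mean $\overline{\eta}(K)$ and whose positive jumps are i.i.d.\ with law $\eta(\,\cdot\,\cap(K,\infty))/\overline{\eta}(K)$, while $X^{\le K}(t,x)$ is the appropriately compensated remainder that absorbs the drift $a$ from Theorem \ref{thm:sol-exist}. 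The extended regular variation of $\overline{\eta}$ from Lemma \ref{lem:eta-express} forces the jump distribution of $X^{>K}$ to be subexponential, so the classical one-big-jump principle for compound Poisson sums yields $P(X^{>K}(t,x)>r)\sim\overline{\eta}(r)$ as $r\to\infty$. To control $X^{\le K}$, I would apply a Kabanov-type moment estimate: for any integer $p\ge 2$ the centered moment $\Ee[|X^{\le K}(t,x)-\Ee X^{\le K}(t,x)|^p]$ is bounded by a finite combination of integrals $\int(p_s(y)z)^q\mathbf{1}_{\{p_s(y)z\le K\}}\,\d s\,\d y\,\lambda(\d z)$ with $q\in\{2,\dots,p\}$, and each such integral is finite under \eqref{eq:small-conv-0} and \eqref{eq:big-conv} since the integrand falls into exactly the convergence classes analyzed in the proof of Theorem \ref{thm:sol-exist}. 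Markov's inequality then gives $P(|X^{\le K}(t,x)|>\epsilon r)\le C_{p,K,\epsilon}r^{-p}$, and combined with $\overline{\eta}(r)\succeq r^{-(1+\alpha/d)}$ from \eqref{eq:liminf-eta}, choosing $p>1+\alpha/d$ yields $P(|X^{\le K}|>\epsilon r)=o(\overline{\eta}(r))$.

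Combining the pieces through the two-sided inclusions
\[
\{X^{>K}>(1+\epsilon)r\}\cap\{|X^{\le K}|\le\epsilon r\}\subset\{X(t,x)>r\}\subset\{X^{>K}>(1-\epsilon)r\}\cup\{X^{\le K}>\epsilon r\}
\]
and exploiting the extended regular variation of $\overline{\eta}$ to control the ratios $\overline{\eta}((1\pm\epsilon)r)/\overline{\eta}(r)$ uniformly as $\epsilon\to 0$, one concludes $P(X(t,x)>r)\sim\overline{\eta}(r)$. The hardest step will be the one-big-jump asymptotic $P(X^{>K}>r)\sim\overline{\eta}(r)$ in the \emph{extended} (rather than classical) regular-variation regime, together with the delicate interchange $r\to\infty$ then $\epsilon\to 0$: extended regular variation only gives polynomial, not exact, control on $\overline{\eta}((1-\epsilon)r)/\overline{\eta}(r)$, and recovering the sharp constant $1$ will require calling on the explicit integral representation of $\overline{\eta}$ from Lemma \ref{lem:eta-express} rather than its envelope alone.
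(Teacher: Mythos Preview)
Your approach is correct but takes a different route from the paper. The paper does not perform a big-jump/small-jump decomposition at all; instead it observes (via Lemma \ref{lem:eta-express}) that $\overline{\eta}$ is of extended regular variation, deduces that its normalization is long-tailed and of dominated variation, hence subexponential by \cite[p.~429]{BGT89}, and then applies Pakes' theorem \cite[Theorem 3.3]{P07} as a black box: for any infinitely divisible law whose L\'evy tail lies in the subexponential class ${\cal S}_0$, the distribution tail is asymptotically equal to the L\'evy tail. The $\overline{X}(t)$ part is handled exactly as you do.

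Your argument---split $X$ according to whether $p_{t-s}(x-y)z$ exceeds a threshold $K$, use the compound-Poisson one-big-jump principle for $X^{>K}$, and kill $X^{\le K}$ by moment bounds---is essentially a self-contained re-proof of the special case of Pakes' theorem needed here. It is longer but avoids citing \cite{P07}. The moment step is fine: since $\eta$ is the L\'evy measure of the finite random variable $X(t,x)$ (Theorem \ref{thm:sol-exist}), one has $\int_0^K u^2\,\eta(\d u)<\infty$, and then $\int_0^K u^p\,\eta(\d u)\le K^{p-2}\int_0^K u^2\,\eta(\d u)<\infty$ for every $p\ge 2$, so all moments of the centred $X^{\le K}$ are finite without re-entering the case analysis of Theorem \ref{thm:sol-exist}.

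Your closing worry is unfounded. Extended regular variation with lower index $\delta_1\ge -(1+\alpha/d)$ (as established in the proof of Lemma \ref{lem:eta-express}) already yields, for each fixed $\epsilon\in(0,1)$,
\[
1\le \limsup_{r\to\infty}\frac{\overline{\eta}((1-\epsilon)r)}{\overline{\eta}(r)}\le (1-\epsilon)^{\delta_1},
\qquad
(1+\epsilon)^{\delta_1}\le \liminf_{r\to\infty}\frac{\overline{\eta}((1+\epsilon)r)}{\overline{\eta}(r)}\le 1,
\]
and both outer bounds tend to $1$ as $\epsilon\downarrow 0$. Thus the sandwich recovers the exact constant $1$ from the polynomial envelope alone; you do not need to invoke the explicit integral representation of $\overline{\eta}$.
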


\begin{proof}
As shown in Lemma \ref{lem:eta-express}(ii), $\overline{\eta}$ is of extended regular variation at infinity.
Then, it follows by \cite[p.\ 66, Theorem 2.0.7]{BGT89}  that,
for each $\Lambda>1$, the next asymptotic relation holds
uniformly in $\lambda\in [1,\Lambda]$:
\[
(1+o(1))\lambda^{\delta_1}
\le \frac{\overline{\eta}(\lambda t)}{\overline{\eta}(t)}
\le (1+o(1))\lambda^{\delta_2},
\quad t\rightarrow\infty.
\]
Then for each fixed $s>0$,
\begin{equation}\label{eq:extended-regular}
(1+o(1))\left(1+\frac{s}{r}\right)^{\delta_1}
\le \frac{\overline{\eta}(r+s)}{\overline{\eta}(r)}
=\frac{\overline{\eta}(r(1+s/r))}{\overline{\eta}(r)}
\le (1+o(1))\left(1+\frac{s}{r}\right)^{\delta_2}
\end{equation}
and thus
\[
\lim_{r\rightarrow\infty}
\frac{\overline{\eta}(r+s)}{\overline{\eta}(r)}=1.
\]
In particular, if we normalize $\overline{\eta}(t)$
as $\overline{\eta}_0(t):=\overline{\eta}(t)/\overline{\eta}(1)$ for $t\ge 1$,
then
\[
\lim_{r\rightarrow\infty}
\frac{\overline{\eta}_0(r+s)}{\overline{\eta}_0(r)}=1.
\]
We can also see by \eqref{eq:extended-regular} that
\[
\limsup_{t\rightarrow\infty}
\frac{\overline{\eta}_0(t)}{\overline{\eta}_0(2t)}\le
\frac{1}{2^{\delta_1}}.
\]
Hence,
$\overline{\eta}_0$
is subexponential by \cite[p.\ 429]{BGT89}:
if $\overline{\eta}_0*\overline{\eta}_0$ denotes the convolution of $\overline{\eta}_0$ and itself, then
\[
\lim_{x\rightarrow\infty}\frac{\overline{\eta}_0*\overline{\eta}_0(x)}{\overline{\eta}_0(x)}=2.
\]
Namely, $\overline{\eta}_0$ belongs to ${\cal S}_0$ in the sense of
\cite[p.\ 297, the beginning of Section 3]{P07}.
Since Theorem \ref{thm:sol-exist} (i)
says that
the distribution of $X(t,x)$ is infinitely divisible with L\'evy measure $\eta$,
we can apply \cite[Theorem 3.3]{P07} with $\gamma=0$ to see that
\[
\lim_{r\rightarrow\infty}\frac{P(X(t,x)>r)}{\overline{\eta}(r)}=1.
\]

For $r>0$, let
\[
S_r=\left\{(s,y,z)\in (0,t]\times \R^d\times (0,\infty) : p_s(y)z>r\right\}.
\]
Since $\nu(S_r)=\overline{\eta}(r)$, we have
\[
P(\overline{X}(t)\le r)=P(\mu(S_r)=0)=e^{-\overline{\eta}(r)}
\]
and thus
\[
P(\overline{X}(t)>r)=1-e^{-\overline{\eta}(r)}\sim \overline{\eta}(r),
\quad r\rightarrow\infty.
\]
Hence the proof is complete.
\end{proof}

The following statement indicates that
one can deduce the asymptotic behavior of $\overline{\eta}(r)$ as $r\rightarrow\infty$ directly
from that of $\overline{\lambda}(r)$ under some mild conditions.
\begin{lem}\label{lem:eta-express-2}
Suppose that \eqref{eq:small-eta-conv} and \eqref{eq:big-conv} are satisfied. Then the following statements hold.
\begin{enumerate}
\item[{\rm (i)}]
If $\int_{(1,
\infty)}z^{1+\alpha/d}\,\lambda(\d z)<\infty$,
then as $r\rightarrow\infty$,
\[
\overline{\eta}(r)\sim \frac{\alpha\omega_d}{d}\frac{1}{r^{1+\alpha/d}}
\int_0^{\infty}u^{\alpha/d}\left(\int_{(u/g(0),\infty)}g^{-1}\left(\frac{u}{z}\right)^d\,\lambda(\d z)\right)\,\d u.
\]
\item[{\rm (ii)}]
Assume that for some $\beta\in [d/(d+\alpha),1+\alpha/d]$ and slowly varying function $l$
at infinity,
\begin{equation}\label{eq:lambda-slowly}
\overline{\lambda}(r)
\asymp
\frac{l(r)}{r^{\beta}}, \quad r\ge 1.
\end{equation}
\begin{enumerate}
\item[{\rm (ii-a)}]
If $d/(d+\alpha)<\beta<1+\alpha/d$, then
$\overline{\eta}(r)\asymp\overline{\lambda}(r)$ as $r\rightarrow\infty$.

\item[{\rm (ii-b)}]
If $\beta=d/(d+\alpha)$ and $\int_1^{\infty}l(r)/r\,\d r<\infty$,
then as $r\rightarrow\infty$,
\[
\overline{\eta}(r)\asymp \frac{1}{r^{\alpha/(d+\alpha)}}\int_r^{\infty}\frac{l(u)}{u}\, \d u.
\]
\item[{\rm (ii-c)}]
If $\beta=1+\alpha/d$, then as $r\rightarrow\infty$,
\[
\overline{\eta}(r)\asymp \frac{1}{r^{1+\alpha/d}}\int_1^r\frac{l(u)}{u}\, \d u.
\]
\end{enumerate}
\end{enumerate}
\end{lem}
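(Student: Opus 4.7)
The plan is to exploit the two-sided estimate
\[
\overline{\eta}(r) \asymp \frac{1}{r^{1+\alpha/d}}\int_{(0,cr]}z^{1+\alpha/d}\,\lambda(\d z)
+ \frac{1}{r^{d/(d+\alpha)}}\int_{(cr,\infty)}z^{d/(d+\alpha)}\,\lambda(\d z),
\]
valid for some positive constant $c$ depending only on $t,d,\alpha,M$. This already follows by combining the Fubini computation in \eqref{eq:eta-upper-0}--\eqref{eq:eta-fubini} with the matching lower bound \eqref{eq:eta-lower-0}. Each assertion of the lemma is then reduced to asymptotically evaluating the two integrals on the right under the respective hypothesis on $\lambda$.

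For (i), I would rather start from the exact identity \eqref{eq:eta-expression},
\[
\overline{\eta}(r)
= \frac{\alpha\omega_d}{d}\frac{1}{r^{1+\alpha/d}}
\int_0^{t^{d/\alpha}r} u^{\alpha/d}\left(\int_{(u/M,\infty)} g^{-1}(u/z)^d\,\lambda(\d z)\right)\d u,
\]
and use the substitution $u=zv$ together with Fubini to show that the natural extension of the outer integral to $(0,\infty)$ equals
\[
\left(\int_0^M v^{\alpha/d}g^{-1}(v)^d\,\d v\right)\int_{(0,\infty)}z^{1+\alpha/d}\,\lambda(\d z).
\]
The first factor is finite by \eqref{eq:g^{-1}-asymp} and $g^{-1}(M)=0$, while the second factor is finite under the extra hypothesis of (i) combined with \eqref{eq:small-eta-conv}. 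Monotone convergence as $r\to\infty$ then gives the claimed $\sim$ asymptotic.

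For (ii), the main tool is Karamata's theorem applied to the regularly varying tail $\overline{\lambda}(r)\asymp l(r)/r^{\beta}$. Integration by parts yields, up to additive constants absorbed into the leading behaviour,
\[
\int_{(0,R]}z^{1+\alpha/d}\,\lambda(\d z) \asymp
\begin{cases} R^{1+\alpha/d-\beta}\,l(R), & \beta<1+\alpha/d,\\ \int_1^R l(u)/u\,\d u, & \beta=1+\alpha/d, \end{cases}
\]
and
\[
\int_{(R,\infty)} z^{d/(d+\alpha)}\,\lambda(\d z) \asymp
\begin{cases} R^{d/(d+\alpha)-\beta}\,l(R), & \beta>d/(d+\alpha),\\ \int_R^\infty l(u)/u\,\d u, & \beta=d/(d+\alpha). \end{cases}
\]
Substituting into the two-sided bound for $\overline{\eta}(r)$ produces: in (ii-a), both pieces collapse to the common order $r^{-\beta}l(r)\asymp\overline{\lambda}(r)$; in (ii-b), the large-$z$ piece dominates because $l(r)$ is of smaller order than $\int_r^\infty l(u)/u\,\d u$ for any slowly varying $l$ with finite logarithmic integral; and in (ii-c), the small-$z$ piece dominates, since $l(r)$ is comparable to or of smaller order than $\int_1^r l(u)/u\,\d u$, both sub-cases $\int^\infty l(u)/u\,\d u<\infty$ and $=\infty$ being handled uniformly.

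The main obstacle is the careful treatment of the two boundary cases $\beta\in\{d/(d+\alpha),\,1+\alpha/d\}$, where Karamata's direct half only yields a slowly varying factor rather than a pure power. One must invoke the monotone density theorem (or a direct slowly varying function argument) to confirm that the boundary contribution $l(R)$ arising from integration by parts is of smaller order than the corresponding logarithmic integral term. With this in hand, matching the two competing prefactors $r^{-(1+\alpha/d)}$ and $r^{-d/(d+\alpha)}$ and identifying the dominant one is then a routine bookkeeping task.
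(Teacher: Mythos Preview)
Your proposal is correct and follows essentially the same route as the paper: part (i) via the exact identity \eqref{eq:eta-expression} plus Fubini and the finiteness of $\int_0^M v^{\alpha/d}g^{-1}(v)^d\,\d v$, and part (ii) via the two-sided estimate from \eqref{eq:eta-fubini}--\eqref{eq:eta-lower-0} combined with Karamata-type asymptotics (the paper cites \cite[Propositions 1.5.9a,b]{BGT89}). One minor caveat: the upper and lower bounds in \eqref{eq:eta-fubini} and \eqref{eq:eta-lower-0} come with \emph{different} cutoff constants ($t^{d/\alpha}/M$ versus $2t^{d/\alpha}/M$), so your sentence ``valid for some positive constant $c$'' glosses over a small step---but under \eqref{eq:lambda-slowly} the two expressions are asymptotically comparable (this is exactly \eqref{eq:int-slow} in the paper), so the argument goes through unchanged.
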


\begin{proof}
(1) Let $M=g(0)$.
Assume that $
\int_{(0,\infty)}z^{1+\alpha/d}\,\lambda(\d z)<\infty$.
Then by the Fubini theorem,
\begin{align*}
&\int_0^{\infty}
\left(\int_{(u/M,\infty)}g^{-1}\left(\frac{u}{z}\right)^d \,\lambda(\d z)\right)
u^{\alpha/d}\,\d u
=\int_{(0,\infty)}
\left(\int_0^{Mz}
u^{\alpha/d}g^{-1}\left(\frac{u}{z}\right)^d\,\d u\right)\,\lambda(\d z)\\
&=\int_{(0,\infty)}
\left(\int_{Mz/2}^{Mz}
u^{\alpha/d}g^{-1}\left(\frac{u}{z}\right)^d \,\d u\right)\,\lambda(\d z)
 +\int_{(0,\infty)}
\left(\int_0^{Mz/2}
u^{\alpha/d}g^{-1}\left(\frac{u}{z}\right)^d \,\d u\right)\,\lambda(\d z)\\
&={\rm (I)}+{\rm (II)}.
\end{align*}
Since there exists $c_1>0$ such that $g^{-1}(s/z)^d\le c_1$ for $s\ge Mz/2$,
we have
\[
{\rm (I)}\le c_1\int_{(0,\infty)}
\left(\int_0^{Mz}u^{\alpha/d}\,\d u\right)\,\lambda(\d z)
=\frac{c_1M^{1+
\alpha/d}}{1+\alpha/d}\int_{(0,\infty)}z^{1+\alpha/d}\,\lambda(\d z)<\infty.
\]
By \eqref{eq:g^{-1}-asymp}, there exist positive constants $c_2$ and $c_3$ such that
\[
{\rm (II)}\le c_2\int_{(0,\infty)}
\left(\int_0^{Mz/2}u^{\alpha/d}\left(\frac{z}{u}\right)^{d/(d+\alpha)}\,\d u\right)\,\lambda(\d z)
=c_3\int_{(0,\infty)}z^{1+\alpha/d}\,\lambda(\d z)<\infty.
\]
Therefore,
\[
\int_0^{\infty}
\left(\int_{(u/M,\infty)}g^{-1}\left(\frac{u}{z}\right)^d \,\lambda(\d z)\right)u^{\alpha/d}\,\d u<\infty.
\]
Then (i) follows by \eqref{eq:eta-expression}.

(2)
Since $g^{-1}(r)\preceq r^{-1/(d+\alpha)} \ (r>0)$ by \eqref{eq:g^{-1}-asymp},
we have by \eqref{eq:eta-expression},
\[\overline{\eta}(r)
\preceq
\frac{1}{r^{1+\alpha/d}}\int_0^{t^{d/\alpha}r}
\left(\int_{(u/M,\infty)}z^{d/(d+\alpha)}\,\lambda(\d z)\right)\,u^{\alpha/d-d/(d+\alpha)}\,\d u.
\]
Let
\begin{equation*}
\begin{split}
&\int_0^{t^{d/\alpha}r}
\left(\int_{(u/M,\infty)}z^{d/(d+\alpha)}\,\lambda(\d z)\right)\,u^{\alpha/d-d/(d+\alpha)}\,\d u\\
&=
\int_0^{1}
\left(\int_{(u/M,\infty)}z^{d/(d+\alpha)} \,\lambda(\d z)\right)
u^{\alpha/d-d/(d+\alpha)}\,\d u\\
&\quad +\int_1^{t^{d/\alpha}r}
\left(\int_{(u/M,\infty)}z^{d/(d+\alpha)} \,\lambda(\d z)\right)
u^{\alpha/d-d/(d+\alpha)}\,\d u\\
&={\rm (III)}+{\rm (IV)}.
\end{split}
\end{equation*}
Then by the Fubini theorem with \eqref{eq:small-eta-conv} and \eqref{eq:big-conv},
\begin{equation*}
\begin{split}
{\rm (III)}&=\int_{(0,\infty)}
\left(\int_0^{1\wedge(zM)}
u^{\alpha/d-d/(d+\alpha)}\,\d u\right)z^{d/(d+\alpha)} \,\lambda(\d z)\\
&\preceq
\int_{(0,1]}
z^{1+\alpha/d}\, \lambda(\d z)+\int_{(1,\infty)}
z^{d/(d+\alpha)} \,\lambda(\d z)\asymp 1.
\end{split}
\end{equation*}
Since the Fubini theorem yields
\begin{align*}
&\int_{(r,\infty)}z^{d/(d+\alpha)} \,\lambda(\d z)
=\frac{d}{d+\alpha}\int_{(r,\infty)}\left(\int_0^z v^{-\alpha/(d+\alpha)}\, \d v\right) \,\lambda(\d z)\\
&=\frac{d}{d+\alpha}
\left\{\int_{(r,\infty)}\left(\int_0^r v^{-\alpha/(d+\alpha)}\,\d v\right) \,\lambda(\d z)
+\int_{(r,\infty)}\left(\int_r^z v^{-\alpha/(d+\alpha)}\,\d v\right) \,\lambda(\d z)\right\}\\
&=r^{d/(d+\alpha)}\overline{\lambda}(r)
+\frac{d}{d+\alpha}\int_r^{\infty} v^{-\alpha/(d+\alpha)}\overline{\lambda}(v)\,\d v,
\end{align*}
we obtain
\begin{align*}
{\rm (IV)}
&\asymp \int_1^{t^{d/\alpha}r} u^{\alpha/d}
\overline{\lambda}\left(\frac{u}{M}\right)\,\d u
+\int_1^{t^{d/\alpha}r}
\left(\int_{u/M}^{\infty} v^{-\alpha/(d+\alpha)}\overline{\lambda}(v)\,\d v\right)
u^{\alpha/d-d/(d+\alpha)}\,\d u\\
&={\rm (IV)_1}+{\rm (IV)_2}.
\end{align*}

As the function $l$ is slowly varying at infinity,
we have the following:
\begin{itemize}
\item If $d/(d+\alpha)<\beta<1+\alpha/d$, then
\[
{\rm (IV)_1}\asymp {\rm (IV)_2}\asymp r^{1+\alpha/d}\overline{\lambda}(r), \quad r\rightarrow\infty,
\]
so that ${\rm (IV)}\asymp r^{1+\alpha/d}\overline{\lambda}(r)$ as $r\rightarrow\infty$.
\item For $\beta=d/(d+\alpha)$ and $\int_1^{\infty}l(r)/r\,\d r<\infty$,
we see by \cite[p.\ 27, Proposition 1.5.9b]{BGT89} that for any $c>0$,
\begin{equation}\label{eq:int-slow}
\int_{cr}^{\infty} \frac{l(u)}{u}\,\d u\sim \int_r^{\infty}\frac{l(u)}{u}\,\d u.
\end{equation}
Hence
\[
{\rm (IV)_1}\asymp r^{1+\alpha/d}\overline{\lambda}(r)=r^{1+\alpha/d-d/(d+\alpha)}l(r),
\quad {\rm (IV)_2}\asymp r^{1+\alpha/d-d/(d+\alpha)}\int_r^{\infty}\frac{l(u)}{u}\,\d u, \quad r\rightarrow\infty.
\]
Moreover, since
\[
\lim_{r\rightarrow\infty}\frac{1}{l(r)}\int_r^{\infty}\frac{l(u)}{u}\,\d u=\infty
\]
by \cite[p.\ 27, Proposition 1.5.9b]{BGT89} again,
we have
${\rm (IV)}\asymp r^{1+\alpha/d-d/(d+\alpha)}\int_r^{\infty}l(u)/u\,\d u$ as $r\rightarrow\infty$.
\item For $\beta=1+\alpha/d$, note that for any $c>0$, we have as $r\rightarrow\infty$,
thanks to \cite[p.\ 26, Proposition 1.5.9a]{BGT89},
\begin{equation}\label{eq:int-slow}
\int_1^{cr}\frac{l(u)}{u}\,\d u\sim \int_1^r\frac{l(u)}{u}\,\d u.
\end{equation}
Therefore,
\[
{\rm (IV)_1}\asymp \int_1^r\frac{l(u)}{u}\, \d u,
\quad {\rm (IV)_2}\asymp r^{1+\alpha/d}\overline{\lambda}(r)=l(r), \quad r\rightarrow\infty.
\]
Moreover, since  \[
\lim_{r\rightarrow\infty}\frac{1}{l(r)}\int_1^r \frac{l(u)}{u}\,\d u=\infty
\]
by \cite[p.\ 26, Proposition 1.5.9a]{BGT89} again,
we have
${\rm (IV)}\asymp \int_1^rl(u)/u\,\d u$ as $r\rightarrow\infty$.
\end{itemize}
Noting that
\[
\overline{\eta}(r)\preceq \frac{1}{r^{1+d/\alpha}}({\rm (III)}+{\rm (IV)})
\asymp \frac{{\rm (IV)}}{r^{1+d/\alpha}},
\]
we obtain the desired upper bounds of $\overline{\eta}(r)$ for (ii-a)--(ii-c).
By \eqref{eq:eta-lower-0} and the same argument as before,
we also get the desired lower bounds of $\overline{\eta}(r)$ for (ii-a)--(ii-c).
\end{proof}

\section{Tails of the spatial supremum}\label{section4}
In this section, we
consider the tail asymptotics of the local supremum of the mild
solution $X(t,x)$ to \eqref{eq:fractional-she}.

\subsection{Tail of the mild solution}
Fix $A\in {\cal B}(\R^d)$.
For $B\in
{\cal B}((0,\infty))$, define
\begin{equation}\label{eq:eta_A}
\eta_A(B)=\nu\left(\left\{(s,y,z)\in (0,t]\times \R^d\times (0,\infty)
: \frac{z}{(t-s)^{d/\alpha}}g\left(\frac{d(y,A)}{(t-s)^{1/\alpha}}\right)
\in B
\right\}\right),
\end{equation}
where $d(x,A)=\inf_{y\in A}|x-y|$ for any $x\in \R^d.$
Note that by the definition of $\nu$,
\[
\eta_A(B)=\nu\left(\left\{(s,y,z)\in (0,t]\times \R^d\times (0,\infty)
: \frac{z}{s^{d/\alpha}}g\left(\frac{d(y,A)}{s^{1/\alpha}}\right)
\in B
\right\}\right).
\]
For any $r>0$,
let
$\overline{\eta_A}(r)=\eta_A((r,\infty))$.

\begin{prop}\label{thm:sup-tail}
Assume that $\lambda$ satisfies   \eqref{eq:big-conv} and
\begin{equation}\label{eq:xa-conv-1}
\begin{dcases}
\int_{(0,1]}z \,\lambda(\d z)<\infty,&\quad \alpha\le d,\\
\int_{(0,1]}z^{\gamma}\,\lambda(\d z)<\infty,&\quad\alpha>d=1,
\end{dcases}
\end{equation} where $\gamma\in ((1+\alpha)/2, \alpha)$.
For any bounded Borel set $A\subset \R^d$, if the normalization of $\overline{\eta_A}(r)$ is subexponential, then
\begin{equation}\label{eq:xa-tail}
P\left(\sup_{x\in A}X(t,x)>r\right)\sim\overline{\eta_A}(r), \quad r\rightarrow\infty.
\end{equation}
\end{prop}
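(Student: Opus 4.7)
The plan is to compare $\sup_{x\in A}X(t,x)$ with the point-process functional
\[
\overline{X}_A(t):=\sup_{i\ge 1,\,\tau_i\le t}\zeta_i\sup_{x\in A}p_{t-\tau_i}(x-\eta_i).
\]
Because $g$ is decreasing, $\sup_{x\in A}p_{s}(x-y)=s^{-d/\alpha}g(d(y,A)/s^{1/\alpha})$, so by the definition \eqref{eq:eta_A} the event $\{\overline{X}_A(t)\le r\}$ coincides with $\{\mu(S_r^A)=0\}$ where $\nu(S_r^A)=\overline{\eta_A}(r)$; this yields $P(\overline{X}_A(t)>r)=1-e^{-\overline{\eta_A}(r)}\sim\overline{\eta_A}(r)$ as $r\to\infty$, exactly as in the last paragraph of the proof of Theorem \ref{thm:eta-tail}. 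It then suffices to show that $\sup_{x\in A}X(t,x)$ and $\overline{X}_A(t)$ share this tail.

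First I would fix a threshold $\rho>1$ and decompose $X=Y^\rho+Z^\rho$, where $Z^\rho(t,x)$ collects the almost surely finitely many contributions from atoms with $\zeta_i>\rho$ (finiteness via \eqref{eq:big-conv}) and $Y^\rho$ is the remainder with centering as in Theorem \ref{thm:sol-exist}. For the upper bound, $\sup_{x\in A}Z^{\rho}(t,x)\le\sum_i\zeta_i\sup_{x\in A}p_{t-\tau_i}(x-\eta_i)\,{\bf 1}_{\{\zeta_i>\rho\}}$ is a compound Poisson sum whose summand tail equals $\overline{\eta_A}$ truncated at $\rho$; the assumed subexponentiality of the normalization of $\overline{\eta_A}$, combined with the classical fact that sums of i.i.d.\ subexponential variables inherit the single-summand tail (the same input used through \cite[Theorem 3.3]{P07} in the proof of Theorem \ref{thm:eta-tail}), gives $\limsup_{r\to\infty}P(\sup_{x\in A}Z^\rho(t,x)>r)/\overline{\eta_A}(r)\le 1$, sharpened to equality by letting $\rho\downarrow 1$ via \eqref{eq:big-conv}. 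The matching lower bound is obtained by conditioning on the largest-contribution atom in $\overline{X}_A(t)$ and absorbing an $(1+\delta)$-factor through the extended regular variation of $\overline{\eta_A}$ (proved as in Lemma \ref{lem:eta-express}(ii) from the explicit representation of $\overline{\eta_A}$).

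The crux is then to verify
\[
P\!\left(\sup_{x\in A}|Y^{\rho}(t,x)|>\varepsilon r\right)=o(\overline{\eta_A}(r)),\qquad r\to\infty,
\]
for every $\varepsilon>0$. This requires a continuous (or at least locally bounded) modification of $Y^\rho$ on $A$, which I would produce by Kolmogorov--Chentsov-type increment estimates built on the heat-kernel asymptotics \eqref{eq:g-asymp}. The hypothesis \eqref{eq:xa-conv-1} provides the integrability that drives these estimates: a second-moment Burkholder--Davis--Gundy bound when $\alpha\le d$, and a $\gamma$-th moment bound with $\gamma\in((1+\alpha)/2,\alpha)$ when $\alpha>d=1$. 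This latter non-$L^2$ regime will be the main obstacle: the polynomial decay $g(r)\sim c_{d,\alpha}r^{-d-\alpha}$ rules out the exponential-moment arguments available in the Gaussian case of \cite{CK22}, so one must carefully propagate the $\gamma$-th moment through the chaining on $A$, with the exponent $(1+\alpha)/2$ in \eqref{eq:xa-conv-1} being precisely what makes the resulting entropy integral converge.
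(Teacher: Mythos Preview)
Your route differs substantially from the paper's, and it contains a genuine gap in the case $\alpha\le d$.

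The paper does not compare $\sup_{x\in A}X(t,x)$ with $\overline{X}_A(t)$ at all. Instead it reduces \eqref{eq:xa-tail} to two analytic facts---that $X(t,\cdot)$ has a continuous modification and that $\sup_{x\in A}|X(t,x)|<\infty$ a.s.---and then invokes \cite[Theorem 3.1]{RS93} on subadditive functionals of infinitely divisible processes as a black box. The bulk of the proof is the regularity/boundedness, carried out through the decomposition $X=X_1+X_2+X_3$ of \eqref{eq:x-decomposition} with the \emph{time-dependent} threshold $z\le p_{t-s}(0)^{-1}$, not a fixed $\rho$; for $\alpha\le d$ the paper switches to the non-compensated form \eqref{eq:decomp-1} (available via \eqref{eq:xa-conv-1}) and argues as for $X_3$ using \cite[Theorem 2.7(i)]{K14}, bypassing any $L^2$ estimate.

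Two concrete problems with your sketch. First, the claim that $Z^{\rho}$ involves ``almost surely finitely many contributions from atoms with $\zeta_i>\rho$'' is false: the intensity of $\mu$ on $(0,t]\times\R^d\times(\rho,\infty)$ is $t\cdot|\R^d|\cdot\overline{\lambda}(\rho)=\infty$, so there are infinitely many such atoms a.s.; \eqref{eq:big-conv} only guarantees convergence of the weighted sum, not finiteness of the index set. Second, and more seriously, for $\alpha\le d$ your proposed second-moment BDG bound on $Y^{\rho}$ cannot work: one has $\int_{\R^d}p_s(y)^2\,\d y\asymp s^{-d/\alpha}$, hence
\[
E\bigl[Y^{\rho}(t,x)^2\bigr]=\int_{(0,\rho]}z^2\,\lambda(\d z)\int_0^t\!\!\int_{\R^d}p_s(y)^2\,\d y\,\d s\asymp\int_0^t s^{-d/\alpha}\,\d s=\infty,
\]
and the same divergence persists for the increment $E[|Y^{\rho}(t,x)-Y^{\rho}(t,x')|^2]$ (the small-time contribution $\int_0^{|x-x'|^{\alpha}}s^{-d/\alpha}\,\d s$ already diverges). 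So no $L^2$ chaining is available in this regime; this is precisely why \eqref{eq:xa-conv-1} imposes $\int_{(0,1]}z\,\lambda(\d z)<\infty$ when $\alpha\le d$, allowing the non-compensated argument the paper uses. Your $\gamma$-moment idea is correct in spirit for $\alpha>d=1$ (and matches the paper's treatment of $X_2$ there), but you still need a separate mechanism for $\alpha\le d$.
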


\begin{proof}
(1) We first note that for the proof of \eqref{eq:xa-tail},
it is enough to prove that $X(t,\cdot)$ has a continuous modification,
and for any bounded Borel set $A\subset \R^d$,
\begin{equation}\label{eq:x-bound}
P\left(\sup_{x\in A}|X(t,x)|<\infty\right)=1.
\end{equation}
To adjust the notations of \cite[Section 3]{RS93},
we define
$S=(0,t]\times \R^d$, $M(\d t\,\d x)=\Lambda(\d t \, \d x)$ and $f_t(x,s,y)=p(t-s,x-y)$.
For any  function $\alpha:\R^d\to \R$, we also define
\[
\phi(\alpha)=\sup_{x\in A\cap \Q^d}\alpha(x), \quad q(\alpha)=\sup_{x\in A\cap \Q^d}|\alpha(x)|,
\]
and
\[
H(r)=\overline{\eta_A}(r)
=\nu\left(\left\{(s,y,z)\in (0,t]\times \R^d\times (0,\infty) : \phi(zf_t(\cdot,s,y))>r\right\}\right).
\]
Then
$X(t,x)=\int_S f_t(x,s,y)\,M(\d s\,\d y)$, and
the continuity of $X(t,\cdot)$
would yield
\[
\phi(X(t,\cdot))=\sup_{x\in A}X(t,x), \quad q(X(t,\cdot))=\sup_{x\in A}|X(t,x)|.
\]
Hence if the normalization of
$\overline{\eta_A}$ is subexponential and \eqref{eq:x-bound} holds,
then we have \eqref{eq:xa-tail} by \cite[Theorem 3.1]{RS93} applied to $X(t,\cdot)$.

(2)
We next
prove that $X(t,\cdot)$ has a continuous modification
and \eqref{eq:x-bound} holds for any bounded Borel set $A\subset\R^d$.
Assume first that $\alpha>d=1$.
Then by  definition,
\begin{equation}\label{eq:x-decomposition}
\begin{split}
X(t,x)
&=mt+\int_{(0,t]\times \R
\times (0,\infty)}
p_{t-s}(x-y)z\left({\bf 1}_{\{z\le p_{t-s}(0)^{-1}\}}-{\bf 1}_{\{z\le 1\}}\right)\,\nu(\d s\,\d y\,\d z)\\
&\quad +\int_{(0,t]\times \R
\times (0,\infty)}
p_{t-s}(x-y)z {\bf 1}_{\{z\le p_{t-s}(0)^{-1}\}}\,(\mu-\nu)(\d s\,\d y\,\d z)\\
&\quad +\int_{(0,t]\times \R
\times (0,\infty)}
p_{t-s}(x-y)z {\bf 1}_{\{z>p_{t-s}(0)^{-1}\}}\,\mu(\d s\,\d y\,\d z)\\
&=:X_1(t,x)+X_2(t,x)+X_3(t,x).
\end{split}
\end{equation}

Let
$M=g(0)$ and
\begin{align*}
X_1(t,x)
&=mt+\int_{(0,t]\times \R
\times (0,\infty)}
p_s(x-y)z{\bf 1}_{\{1<z\le p_s(0)^{-1}\}}\,\d s\,\d y\,\lambda(\d z)\\
&\quad -\int_{(0,t]\times \R
\times (0,\infty)}p_s(x-y)z {\bf 1}_{\{p_s(0)^{-1}<z\le 1\}}\,\d s\,\d y\,\lambda(\d z)\\
&=:mt+X_{11}(t,x)-X_{12}(t,x).
\end{align*}
Then by the Fubini theorem,
\begin{align*}
X_{11}(t,x)
&=\int_0^t \left(\int_{(0,\infty)}z{\bf 1}_{\{1<z\le p_s(0)^{-1}\}}\,\lambda(\d z)\right)\,\d s
=\int_{(1,\infty)} \left(\int_0^t
{\bf 1}_{\{s>(Mz)^{\alpha}\}}
\,\d s\right)z\,\lambda(\d z)\\
&=\int_{(1,\infty)}
(t-(Mz)^{\alpha}){\bf 1}_{\{z\le t^{1/\alpha}/M\}}z
\,\lambda(\d z)
\le \frac{t^{1+1/\alpha}}{M}
\overline{\lambda}(1)<\infty
\end{align*}
and
\begin{align*}
X_{12}(t,x)
&=\int_{(0,1]} \left(\int_0^t {\bf 1}_{\{s\le (Mz)^{\alpha}\}}\,\d s\right)z\,\lambda(\d z)
=\int_{(0,1]} (t\wedge (Mz)^{\alpha})z\,\lambda(\d z)\\
&\le M^{\alpha}\int_{(0,1]} z^{1+\alpha}\,\lambda(\d z)<\infty.
\end{align*}
Therefore, $X_1(t,x)$ is independent of $x$,
and there exists  $M_0\in (0,\infty)$ such that
\begin{equation}\label{eq:x_1-bound}
P\left(\sup_{x\in \R^d}|X_1(t,x)|\le M_0\right)=1.
\end{equation}

For $x,x'\in \R$,
\begin{align*}
&X_2(t,x)-X_2(t,x')\\
&=\int_{(0,t]\times \R\times (0,\infty)}
\left(p_{t-s}(x-y)-p_{t-s}(x'-y)\right)z {\bf 1}_{\{z\le (t-s)^{1/\alpha}/M\}}\,(\mu-\nu)(\d s\,\d y\,\d z).
\end{align*}
Then by \cite[Theorem 1]{MR14} with $\alpha=p=2$,
\begin{align*}
&E\left[\left(X_2(t,x)-X_2(t,x')\right)^2\right]\\
&=E\left[\left(\int_{(0,t]\times \R\times (0,\infty)}
\left(p_{t-s}(x-y)-p_{t-s}(x'-y)\right)z {\bf 1}_{\{z\le (t-s)^{1/\alpha}/M\}}\,(\mu-\nu)(\d s\,\d y\,\d z)
\right)^2\right]\\
&\le \int_{(0,t]\times \R\times (0,\infty)}
\left(p_{t-s}(x-y)-p_{t-s}(x'-y)\right)^2 z^2
{\bf 1}_{\{z\le (t-s)^{1/\alpha}/M\}}\,\nu (\d s\,\d y\,\d z)\\
&=\int_{(0,t]\times \R\times (0,\infty)}
\left(p_s(x-y)-p_s(x'-y)\right)^2 z^2
{\bf 1}_{\{z\le s^{1/\alpha}/M\}}\,\d s\,\d y\,\lambda(\d z).
\end{align*}
Since $g(r)\le g(0)=M$ for any $r>0$,
we have for any $w\in \R$,
$z>0$ and $s\in (0,t]$,
\[
p_s(w)z{\bf 1}_{\{z\le s^{1/\alpha}/M\}}
=\frac{1}{s^{1/\alpha}}g\left(\frac{|w|}{s^{1/\alpha}}\right)z{\bf 1}_{\{z\le s^{1/\alpha}/M\}}
\le \frac{M}{s^{1/\alpha}}\frac{s^{1/\alpha}}{M}=1.
\]
This implies that for any $\gamma\in (0,2)$,
\begin{align*}
&\left(p_s(x-y)-p_s(x'-y)\right)^2 z^2
{\bf 1}_{\{z\le s^{1
/\alpha}/M \}}\\
&\le (p_s(x-y)+p_s(x'-y))^{2-\gamma}z^{2-\gamma}
|p_s(x-y)-p_s(x'-y)|^{\gamma} z^{\gamma}
{\bf 1}_{\{z\le s^{1
/\alpha}/M \}}\\
&\le 2^{2-\gamma}|p_s(x-y)-p_s(x'-y)|^{\gamma}
z^{\gamma} {\bf 1}_{\{z\le t^{1
/\alpha}/M \}}.
\end{align*}
In particular, if we take $\gamma\in (1,2)$ so that
\begin{equation}\label{eq:gamma-cond}
\frac{1+\alpha}{2}<\gamma<\alpha,
\end{equation}
then by Lemma \ref{lem:heat-kernel} (ii),
\begin{align*}
&\int_{(0,t]\times \R\times (0,\infty)}
\left(p_s(x-y)-p_s(x'-y)\right)^2 z^2
{\bf 1}_{\{z\le s^{1/\alpha}/M\}}\,\d s\,\d y\,\lambda(\d z)\\
&\preceq \int_{(0,t]\times \R}
|p_s(x-y)-p_s(x'-y)|^{\gamma}\,\d s\,\d y
\int_{(0,t^{1/\alpha}/M]}z^{\gamma} \,\lambda(\d z)\\
&\preceq |x-x'|^{(1-\gamma)+\alpha}\int_{(0,t^{1/\alpha}/M]}z^{\gamma} \,\lambda(\d z).
\end{align*}
Moreover, since
$(1-\gamma)+\alpha>1$,
we see by \cite[Theorem 4.3]{K09} that $X_2(t,\cdot)$ has a continuous modification,
and for any
$\theta\in [0,(\alpha-\gamma)/2)$,
\[
E\left[
\sup_{x,x'\in \R,\, x\ne x'}\left(\frac{|X_2(t,x)-X_2(t,x')|}{|x-x'|^{\theta}}\right)^2\right]<\infty.
\]
Hence for any $a\in A$ and $\gamma\in
(1,2)$ with \eqref{eq:gamma-cond},
we have by the triangle inequality and \cite[Theorem 1]{MR14} again,
\begin{align*}
E\left[\sup_{x\in A}X_2(t,x)^2\right]
&\le 2E\left[\sup_{x, x'\in A}(X_2(t,x)-X_2(t,x'))^2\right]+2E\left[X_2(t,a)^2\right]\\
&\preceq \sup_{x,x'\in A}|x-x'|^{\theta}
+\int_{(0,t]\times \R\times (0,\infty)}p_s(a-y)^2 z^2
{\bf 1}_{\{z\le s^{1/\alpha}/M\}}\,\d s\,\d y\,\lambda(\d z)\\
&\preceq \sup_{x,x'\in A}|x-x'|^{\theta}
+\int_{(0,t]\times \R}p_s(y)^{\gamma}\,\d s\,\d y
\int_{(0, t^{1/\alpha}/M]}z^{\gamma}\,\lambda(\d z).
\end{align*}
Then by assumption,
$\int_{(0, t^{1/\alpha}/M]}z^{\gamma}\,\lambda(\d z)<\infty$.
Since, for $\gamma>1$,
\begin{equation}\label{eq:heat-gamma}
\begin{split}
&\int_{(0,t]\times \R}p_s(y)^{\gamma}\,\d s\,\d y
\le \int_0^t \left(\frac{M}{s^{1/\alpha}}\right)^{\gamma-1}\,\d s\int_{\R} p_s(y)\,\d y
\preceq t^{1+(1-\gamma)/\alpha},
\end{split}
\end{equation}
we have for any $t>0$ and bounded Borel set
$A\subset \R
$,
\begin{equation}\label{eq:x_2-bound}
E\left[\sup_{x\in A}X_2(t,x)^2\right]<\infty.
\end{equation}

Let $A\subset \R
$ be a bounded Borel set such that
$A\subset B(r)$ for some $r>0$.
Then for any $x\in A$,
\begin{align*}
X_3(t,x)
&=\int_{(0,t]\times B(2r)\times (0,\infty)}
\frac{1}{(t-s)^{1/\alpha}}g\left(\frac{|x-y|}{(t-s)^{1/\alpha}}\right)
z{\bf 1}_{\{z>(t-s)^{1/\alpha}M\}}\,\mu(\d s\, \d y\, \d z)\\
&\quad +\int_{(0,t]\times B(2r)^c\times (0,\infty)}\frac{1}{(t-s)^{1/\alpha}}g\left(\frac{|x-y|}{(t-s)^{1/\alpha}}\right)
z{\bf 1}_{\{z>(t-s)^{1/\alpha}/M\}}\,\mu(\d s\, \d y\, \d z)\\
&={\rm (I)}+{\rm (II)}.
\end{align*}

Since $g(r)\le g(0)=M$ for any $r>0$, we have
\begin{equation}\label{eq:x_3-I}
{\rm (I)}\le
\int_{(0,t]\times B(2r)\times (0,\infty)}
\frac{Mz}{(t-s)^{1/\alpha}}
{\bf 1}_{\{z>(t-s)^{1/\alpha}/M\}}\,\mu(\d s\, \d y\, \d z).
\end{equation}
Then
\begin{align*}
&\int_{(0,t]\times B(2r)\times (0,\infty)}
\left\{1\wedge \left(\frac{Mz}{s^{1/\alpha}}{\bf 1}_{\{z>s^{1/\alpha}/M\}}\right)\right\}
\,\d s\,\d y\,\lambda(\d z)\\
&=\int_{(0,t]\times B(2r)\times (0,\infty)}
{\bf 1}_{\{z>s^{1/\alpha}/M\}}\,\d s\,\d y\,\lambda(\d z)
\asymp \int_{(0,\infty)}(z^{\alpha}\wedge t)\,\lambda(\d z).
\end{align*}
As
the last integral
above is convergent
by  \eqref{eq:xa-conv-1},
the right hand side of \eqref{eq:x_3-I} and thus ${\rm (I)}$ are convergent almost surely
by \cite[p.\ 43, Theorem 2.7 (i)]{K14}.

We also note that for any $x\in A$ and $y\in B(2r)^c$, $|y|/2\le |y-x|\le 3|y|/2
$.
Then by \eqref{eq:g-asymp},
\begin{equation}\label{eq:x_3-II}
\begin{split}
{\rm (II)}
&\preceq
\int_{(0,t]\times B(2r)^c\times (0,\infty)}
\frac{1}{(t-s)^{1
/\alpha}}g\left(\frac{|y|}{(t-s)^{1/\alpha}}\right)
z
{\bf 1}_{\{z>(t-s)^{1/\alpha}/M\}}
\,\mu(\d s\, \d y\, \d z)\\
&\preceq
\int_{(0,t]\times \R
\times (0,\infty)}
p_{t-s}(y)z {\bf 1}_{\{z>(t-s)^{1
/\alpha}/M\}}
\,\mu(\d s\, \d y\, \d z).
\end{split}
\end{equation}
Since $1=d<\alpha<2$ by assumption, \eqref{eq:xa-conv-1} implies \eqref{eq:small-conv-0}.
As \eqref{eq:big-conv} holds by assumption,
we can follow the proof of Theorem \ref{thm:sol-exist}
to verify that
\[
\int_{(0,t]\times \R \times(0,\infty)}
\left\{1\wedge \left(p_s(y)z{\bf 1}_{\{z>s^{1/\alpha}/M\}}\right)\right\}\,\d s\d y\lambda(\d z)<\infty.
\]
Indeed, $z>s^{1/\alpha}/M$ if and only if $s<H_1(z)$,
and so the assertion above follows from the arguments  for \eqref{eq:a_1-equiv} and \eqref{eq:b_1-equiv}.
Then by \cite[p.\ 43, Theorem 2.7 (i)]{K14},
the last integral in \eqref{eq:x_3-II} and thus ${\rm (II)}$ are convergent almost surely.

By the argument above,
the upper bounds of ${\rm (I)}$ and ${\rm (II)}$ are independent of $x\in A$;
that is,
\begin{equation}\label{eq:x_3-bound}
P\left(\sup_{x\in A}X_3(t,x)<\infty\right)=1.
\end{equation}
Moreover, $X_3(t,\cdot)$ is continuous by the continuity of $g$
and the dominated convergence
theorem.

To summarize the argument above,
if $\alpha>d=1$, then we obtain
\begin{itemize}
\item $X_1(t,x)$ is independent of $x\in \R^d$;
\item $X_2(t,\cdot)$ has a continuous modification;
\item $X_3(t,\cdot)$ is continuous.
\end{itemize}
Therefore, $X(t,\cdot)$ also has a continuous modification,
for which we use the same notation.
We also have \eqref{eq:x-bound}  by \eqref{eq:x_1-bound}, \eqref{eq:x_2-bound} and \eqref{eq:x_3-bound}.

On the other hand, if $d\ge \alpha$, then, by \eqref{eq:xa-conv-1},
\eqref{eq:first-moment} is satisfied
and so \eqref{eq:decomp-1} holds.
Thus, we can follow the argument for $X_3(t,x)$
(as well as the proof of Theorem \ref{thm:sol-exist})
to prove \eqref{eq:x-bound}.
\end{proof}

\subsection{Tail behaviors of measures}
Let $A\subset \R^d$ be a
bounded
Borel set with $0<|\overline{A}|<\infty$ (here and in what follows, $\bar A$ denotes the closure of $A$).
We proved in Theorem \ref{thm:sup-tail} that, under some assumptions,
the measure $\eta_A$
determines
the asymptotic tail distribution of $\sup_{x\in A}X(t,x)$.
On the other hand, since the function $g(r)$ is decreasing on $(0,\infty)$,
by the definition of $\eta_A$ in \eqref{eq:eta_A},
the main contribution to the mass of $\eta_A$ comes from  the points
$(s,y,z)\in (0,t]\times \R^d\times (0,\infty)$ with $y\in \overline{A}$.
In other words, we expect that $\overline{\eta}_A(r)$ is comparable to
\begin{align*}
&\nu\left(\left\{(s,y,z)\in (0,t]\times \overline{A}\times (0,\infty) :
\frac{z}{
(t-s)^{d/\alpha} 
}g\left(\frac{d(y,A)}{(t-s)^{d/\alpha}}\right)>r\right\}\right)\\
&=\nu\left(\left\{(s,y,z)\in (0,t]\times \overline{A}\times (0,\infty) :
\frac{z}{s^{d/\alpha}}g(0)>r\right\}\right)=|\overline{A}|\overline{\tau}(r/g(0)).
\end{align*}
Here $m$ is the Lebesgue measure on ${\cal B}(\R^d)$
and $\tau$ is the measure on
${\cal B}((0,\infty))$ defined by
\begin{equation}\label{eq:def-tau}
\tau(B)=(m\otimes \lambda)\left(\left\{(s,z)\in (0,t]\times (0,\infty) :
z/s^{d/\alpha}\in B\right\}\right), \quad B\in
{\cal B}((0,\infty)).
\end{equation}

Our purpose in this subsection is to reveal the relation between
$\overline{\eta_A}(r)$ and $\overline{\tau}(r)$
with the aid of $\overline{\lambda}(r)$,
which yields the subexponentiality of $\overline{\eta_A}(r)$.
We first prove basic properties of $\overline{\tau}(r)$.
\begin{lem}\label{lem:tau-finite}
$\overline{\tau}(r)<\infty$ for any $r>0$, if and only if
\begin{equation}\label{eq:tau-conv}
\int_{(0,1]}z^{\alpha/d}\,\lambda(\d z)<\infty.
\end{equation}
Under this condition,
$r\mapsto\overline{\tau}(r)$ is continuous and decreasing on $(0,\infty)$ such that
\begin{equation}\label{eq:liminf-tau}
\liminf_{r\rightarrow\infty}r^{\alpha/d}\overline{\tau}(r)>0;
\end{equation}
moreover, $\overline{\tau}(r)$ is of extended regular variation at infinity.
\end{lem}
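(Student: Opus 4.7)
The plan is to derive a concrete integral representation of $\overline{\tau}$ and then extract the four claims from it. Applying Fubini in \eqref{eq:def-tau} gives $\overline{\tau}(r)=\int_0^t\overline{\lambda}(rs^{d/\alpha})\,\d s$, and the substitution $u=rs^{d/\alpha}$ yields
$$
\overline{\tau}(r)=\frac{\alpha}{d}r^{-\alpha/d}\int_0^{rt^{d/\alpha}}\overline{\lambda}(u)u^{\alpha/d-1}\,\d u;
$$
interchanging the order of integration directly at the level of $\lambda$ also gives the equivalent form
$$
\overline{\tau}(r)=r^{-\alpha/d}\int_{(0,rt^{d/\alpha}]}z^{\alpha/d}\,\lambda(\d z)+t\overline{\lambda}(rt^{d/\alpha}).
$$

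From these representations the first three assertions are essentially immediate. Since $\overline{\lambda}(a)<\infty$ for every $a>0$ (a consequence of $\int(1\wedge z^2)\,\lambda(\d z)<\infty$), the only possible source of divergence is the behaviour of $z^{\alpha/d}$ near $0$ in the second representation, giving the criterion $\overline{\tau}(r)<\infty$ for every $r>0$ if and only if \eqref{eq:tau-conv} holds. Monotonicity is obvious from the definition. Continuity is read off the first representation, since $\overline{\tau}(r)$ equals a smooth prefactor times an indefinite integral of a locally bounded function. Finally, multiplying the second representation by $r^{\alpha/d}$ shows that $r^{\alpha/d}\overline{\tau}(r)\ge\int_{(0,rt^{d/\alpha}]}z^{\alpha/d}\,\lambda(\d z)$, and the right-hand side increases to $\int_{(0,\infty)}z^{\alpha/d}\,\lambda(\d z)>0$ by the nontriviality of $\lambda$, proving \eqref{eq:liminf-tau}.

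For the extended regular variation I will mirror the argument used in the proof of Lemma~\ref{lem:eta-express}(ii). Differentiating the first representation gives
$$
\frac{\overline{\tau}'(r)}{\overline{\tau}(r)}=\frac{1}{r}\left[\frac{\overline{\lambda}(rt^{d/\alpha})(rt^{d/\alpha})^{\alpha/d}}{\int_0^{rt^{d/\alpha}}\overline{\lambda}(u)u^{\alpha/d-1}\,\d u}-\frac{\alpha}{d}\right],
$$
and using the monotonicity of $\overline{\lambda}$ (the denominator is bounded below by $(d/\alpha)\overline{\lambda}(rt^{d/\alpha})(rt^{d/\alpha})^{\alpha/d}$) shows that the bracket lies in $[-\alpha/d,0]$. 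Thus $h(r):=r\overline{\tau}'(r)/\overline{\tau}(r)$ is bounded on $[1,\infty)$, and the representation $\log\overline{\tau}(r)=\log\overline{\tau}(1)+\int_1^r h(s)/s\,\d s$ together with \cite[p.~74, Theorem~2.2.6]{BGT89} yields the extended regular variation at infinity.

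The only mildly delicate point is that $\overline{\lambda}$ is merely right-continuous, so the pointwise differentiation in the last step is valid only almost everywhere. This is not a real obstacle: because the first representation writes $r^{\alpha/d}\overline{\tau}(r)$ as a multiple of an indefinite integral of a locally bounded function, $\overline{\tau}$ is absolutely continuous, the fundamental theorem of calculus applies, and the logarithmic representation above remains valid.
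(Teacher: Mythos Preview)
Your proof is correct and follows essentially the same route as the paper: both derive the two integral representations of $\overline{\tau}$, read off finiteness, monotonicity, and \eqref{eq:liminf-tau} from the decomposition $\overline{\tau}(r)=r^{-\alpha/d}\int_{(0,rt^{d/\alpha}]}z^{\alpha/d}\,\lambda(\d z)+t\overline{\lambda}(rt^{d/\alpha})$, and then verify extended regular variation by bounding $r\overline{\tau}'(r)/\overline{\tau}(r)$ in $[-\alpha/d,0]$ and invoking \cite[Theorem~2.2.6]{BGT89}. The only cosmetic difference is the continuity argument: the paper shows $\overline{\tau}(r)=\tau([r,\infty))$ via Fubini, whereas you appeal directly to the absolute continuity visible in the integral representation; your remark about $\overline{\lambda}$ being only right-continuous and the a.e.\ validity of the differentiation is a welcome clarification that the paper glosses over.
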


\begin{proof}
By definition, we have for $r>0$,
\begin{equation}\label{eq:tau-decomp}
\begin{split}
\overline{\tau}(r)
&=m\otimes\lambda\left(\left\{(s,z)\in (0,t]\times (0,\infty) :
z/s^{d/\alpha}
>r\right\}\right)
=\int_{(0,\infty)}\left(t\wedge \left(\frac{z}{r}\right)^{\alpha/d}\right)\,\lambda(\d z)\\
&=\frac{1}{r^{\alpha/d}}\int_{(0,rt^{d/\alpha}]}z^{\alpha/d}\,\lambda(\d z)
+t\overline{\lambda}(rt^{d/\alpha}).
\end{split}
\end{equation}
Therefore, the first assertion follows.

Assume that $\overline{\tau}(r)<\infty$ for any $r>0$.
Then for any $r_0>0$,
$\lim_{r\rightarrow r_0+}\overline{\tau}(r)=\overline{\tau}(r_0)$ by definition.
Since the Fubini theorem implies that
\[
\overline{\tau}(r)=\int_0^t\left(\int_{(rs^{\alpha/d},\infty)}
\,\lambda(\d z)\right)\,\d s
=\int_0^t\left(\int_{[rs^{\alpha/d},\infty)}
\,\lambda(\d z)\right)\,\d s=\tau([r,\infty)),
\]
we also have for any $r_0>0$,
$\lim_{r\rightarrow r_0-}\overline{\tau}(r)=\overline{\tau}(r_0)$.
Hence  $\overline{\tau}$ is continuous.
The decreasing property of $\overline{\tau}$ is obvious,
and
\eqref{eq:liminf-tau}
follows from \eqref{eq:tau-decomp}.

The proof of the last assertion is similar to that of Lemma \ref{lem:eta-express} (ii).
Let
$$f(r)=\frac{\alpha}{d}
\int_0^{rt^{d/\alpha}}u^{\alpha/d-1}\overline{\lambda}(u)\,\d u.$$
Then by definition,
\begin{align*}
\overline{\tau}(r)
& =m\otimes\lambda\left(\left\{(s,z)\in (0,t]\times (0,\infty) : z/s^{d/\alpha}
>r\right\}\right)\\
&=\int_{0}^t\overline{\lambda}(rs^{d/\alpha})\,\d s
=\frac{\alpha}{d}\frac{1}{r^{\alpha/d}}\int_0^{rt^{d/\alpha}}u^{\alpha/d-1}\overline{\lambda}(u)\,\d u
=\frac{f(r)}{r^{\alpha/d}}.
\end{align*}
Since
\[
\log f(r)-\log f(1)=\int_1^r\frac{f'(s)}{f(s)}\, \d s
=\int_1^r
\frac{s^{\alpha/d-1}t \overline{\lambda}(st^{d/\alpha})}{\int_0^{st^{d/\alpha}}u^{\alpha/d-1}\overline{\lambda}(u)\,\d u}
\,\d s,
\]
we have
\begin{equation}\label{eq:log-tau}
\begin{split}
\log \overline{\tau}(r)
=-\frac{\alpha}{d}\log r+\log f(r)
&=-\frac{\alpha}{d}\log r+\log f(1)
+\int_1^r
\frac{s^{\alpha/d-1}t\overline{\lambda}(st^{d/\alpha})}
{\int_0^{st^{d/\alpha}}u^{\alpha/d-1}\overline{\lambda}(u)\,\d u}\,\d s\\
&=\log f(1)
+\int_1^r\left(\frac{s^{\alpha/d}t\overline{\lambda}(st^{d/\alpha})}
{\int_0^{st^{d/\alpha}}u^{\alpha/d-1}\overline{\lambda}(u)\,\d u}-\frac{\alpha}{d}\right)\frac{1}{s}\,\d s.
\end{split}
\end{equation}

Let
\[
\xi(s)=\frac{s^{\alpha/d}t\overline{\lambda}(st^{d/\alpha})}
{\int_0^{st^{d/\alpha}}u^{\alpha/d-1}\overline{\lambda}(u)\,\d u}-\frac{\alpha}{d}.
\]
Since
\[
\int_0^{st^{d/\alpha}}u^{\alpha/d-1}\overline{\lambda}(u)\,\d u
\ge \overline{\lambda}(st^{d/\alpha})\int_0^{st^{d/\alpha}}u^{\alpha/d-1}\,\d u
=\frac{d}{\alpha}s^{\alpha/d}t \overline{\lambda}(st^{d/\alpha}),
\]
we get
\[
-\frac{\alpha}{d}<\xi(s)\le 0, \quad s\ge 0.
\]
Hence
by \eqref{eq:log-tau} and \cite[p.\ 74, Theorem 2.2.6]{BGT89}$,
\overline{\tau}$ is of extended regular
variation at infinity
so that (ii) follows.
\end{proof}

In
Subsection
\ref{section:tau} of Appendix below,
we will discuss the connection of the measure $\tau$ defined by \eqref{eq:def-tau} with
a functional of the Poisson random measure.
In particular,
we will point out there
that if \eqref{eq:tau-conv} fails, then for any $x\in \R^d$ and $r>0$,
\[
\sup_{y\in B(x,r)}X(t,y)=\infty, \quad \text{$P$-a.s.}
\]

We next reveal the relation between $\overline{\tau}(r)$ and $\overline{\lambda}(r)$.
\begin{lem}\label{lem:tau}
Suppose that \eqref{eq:tau-conv} holds. Then we have the following statements.
\begin{enumerate}
\item[{\rm (i)}]
If $\int_{(1,
\infty)}z^{\alpha/d}\,\lambda(\d z)<\infty$,
then as $r\rightarrow\infty$,
\[
\overline{\tau}(r)\sim\frac{1}{r^{\alpha/d}}\int_{(0,\infty)}z^{\alpha/d}\,\lambda(\d z).
\]

\item[{\rm (ii)}]
Suppose that $\overline{\lambda}(r)=l(r)/r^{\beta}$ for $r\ge 1$,
where $\beta\in [0,\alpha/d]$ and  slowly varying function $l(r)$ at infinity.
Then as $r\rightarrow\infty$,
\[
\overline{\tau}(r)\sim
\begin{dcases}
\frac{1}{r^{\alpha/d}}\left(\int_{(0,1]}z^{\alpha/d}\,\lambda(\d z)+\overline{\lambda}(1)
+\frac{\alpha}{d}\int_1^r
\frac{l(u)}{u}\,\d u \right), & \beta=\frac{\alpha}{d},\\
\frac{\alpha}{\alpha-d\beta}t^{1-d\beta/\alpha}\overline{\lambda}(r), & 0\le \beta<\frac{\alpha}{d}.
\end{dcases}
\]
In particular,
if $\beta=\alpha/d$ and $\int_1^{\infty}l(u)/u\,\d u<\infty$,
then as $r\rightarrow\infty$,
\[
\overline{\tau}(r)\sim
\frac{1}{r^{\alpha/d}}\left(\int_{(0,1]}z^{\alpha/d}\,\lambda(\d z)+\overline{\lambda}(1)
+\frac{\alpha}{d}\int_1^{\infty}
\frac{l(u)}{u}\,\d u \right).
\]
On the other hand,
if $\beta=\alpha/d$ and $\int_1^{\infty}l(u)/u\,\d u=\infty$,
then as $r\rightarrow\infty$,
\[
\overline{\tau}(r)\sim
\frac{\alpha}{d}\frac{1}{r^{\alpha/d}}
\int_1^r \frac{l(u)}{u}\,\d u.
\]
\end{enumerate}
\end{lem}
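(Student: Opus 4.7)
The plan is to start from the explicit decomposition established in \eqref{eq:tau-decomp}, namely
\[
\overline{\tau}(r)
=\frac{1}{r^{\alpha/d}}\int_{(0,rt^{d/\alpha}]}z^{\alpha/d}\,\lambda(\d z)+t\,\overline{\lambda}(rt^{d/\alpha}),
\]
and analyze the two pieces under each of the three hypotheses. This reduces the lemma to an application of Karamata's theorem (\cite[p.~26--27, Proposition 1.5.9a/b]{BGT89}) after an integration by parts.

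For statement (i), under $\int_{(1,\infty)}z^{\alpha/d}\,\lambda(\d z)<\infty$ the first term tends, after multiplication by $r^{\alpha/d}$, to $\int_{(0,\infty)}z^{\alpha/d}\,\lambda(\d z)$ by monotone convergence. The second term is negligible because Markov's inequality gives
\[
r^{\alpha/d}\overline{\lambda}(rt^{d/\alpha})
\le t^{-1}\int_{(rt^{d/\alpha},\infty)}z^{\alpha/d}\,\lambda(\d z)\longrightarrow 0,
\]
which yields (i).

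For statement (ii), I would rewrite the inner integral via the Stieltjes integration by parts $\lambda(\d z)=-\d\overline{\lambda}(z)$:
\[
\int_{(1,R]}z^{\alpha/d}\,\lambda(\d z)
=\overline{\lambda}(1)-R^{\alpha/d}\overline{\lambda}(R)+\frac{\alpha}{d}\int_1^R z^{\alpha/d-1}\overline{\lambda}(z)\,\d z,
\]
then substitute $\overline{\lambda}(z)=l(z)/z^{\beta}$ and $R=rt^{d/\alpha}$. When $0\le\beta<\alpha/d$ the exponent $\alpha/d-1-\beta>-1$, so Karamata's theorem \cite[Proposition~1.5.9a]{BGT89} gives
\[
\int_1^R z^{\alpha/d-1-\beta}l(z)\,\d z\sim\frac{R^{\alpha/d-\beta}l(R)}{\alpha/d-\beta},
\]
and a short algebraic manipulation combined with the slow variation of $l$ (which implies $l(rt^{d/\alpha})\sim l(r)$) collapses both terms of $\overline{\tau}(r)$ into the single constant $\alpha/(\alpha-d\beta)\,t^{1-d\beta/\alpha}\,\overline{\lambda}(r)$. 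When $\beta=\alpha/d$ the exponent is exactly $-1$, so that step produces $\int_1^R l(z)/z\,\d z$, which is itself slowly varying (and possibly divergent); the boundary term $-R^{\alpha/d}\overline{\lambda}(R)=-l(R)$ exactly cancels the $t\overline{\lambda}(rt^{d/\alpha})=r^{-\alpha/d}l(rt^{d/\alpha})$ contribution modulo lower-order error. To handle the convergent versus divergent subcases for $\int_1^\infty l(u)/u\,\d u$, I would invoke \cite[Proposition~1.5.9b]{BGT89} in the first case to replace $rt^{d/\alpha}$ by $\infty$, and \cite[Proposition~1.5.9a]{BGT89} in the second to replace $rt^{d/\alpha}$ by $r$ and absorb the constants into the dominant slowly varying integral.

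The main obstacle is the delicate cancellation at $\beta=\alpha/d$: one needs the boundary term $-l(R)$ from the integration by parts to combine exactly with $t\overline{\lambda}(rt^{d/\alpha})$ so that only the Karamata-type integral $\int_1^r l(u)/u\,\d u$ survives. Getting the right constants and keeping track of which remainders are absorbed as $o(1)$ (when the slowly varying integral diverges) versus kept (when it converges) is where I expect most of the bookkeeping to go. Once this is done, assembling the final asymptotics in the three subcases is routine.
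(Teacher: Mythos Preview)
Your proposal is correct and, at its core, follows the same line as the paper: both arguments reduce $\overline{\tau}(r)$ to the key integral $\int_1^{rt^{d/\alpha}} u^{\alpha/d-1}\overline{\lambda}(u)\,\d u$ and then invoke Karamata's theorem (\cite[Propositions 1.5.8--1.5.9]{BGT89}) to extract the asymptotics in each regime. The difference is how you arrive at that integral. The paper splits $\overline{\tau}(r)$ according to $z\le 1$ versus $z>1$, writes the latter contribution as $\int_0^t\overline{\lambda}(rs^{d/\alpha}\vee 1)\,\d s$, and then changes variables $u=rs^{d/\alpha}$ directly; you instead stay with the decomposition \eqref{eq:tau-decomp} and integrate by parts against $\d\overline{\lambda}$.

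One remark: the ``delicate cancellation'' you flag as the main obstacle is in fact an exact algebraic identity, not merely an asymptotic one. With $R=rt^{d/\alpha}$ one has $R^{\alpha/d}=r^{\alpha/d}t$, so the boundary term $-R^{\alpha/d}\overline{\lambda}(R)$ from the integration by parts and the term $r^{\alpha/d}t\,\overline{\lambda}(R)$ coming from the second summand of \eqref{eq:tau-decomp} cancel \emph{exactly} (up to at most a single atom at $R$, which is asymptotically negligible). This leaves the clean identity
\[
r^{\alpha/d}\overline{\tau}(r)=\int_{(0,1]}z^{\alpha/d}\,\lambda(\d z)+\overline{\lambda}(1)+\frac{\alpha}{d}\int_1^{rt^{d/\alpha}}u^{\alpha/d-1}\overline{\lambda}(u)\,\d u,
\]
from which all three subcases follow immediately by substituting $\overline{\lambda}(u)=l(u)/u^{\beta}$ and applying the appropriate half of \cite[Proposition 1.5.9]{BGT89}. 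So your route actually involves less bookkeeping than you anticipate, and arguably a little less than the paper's own argument.
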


\begin{proof}
(1) By \eqref{eq:tau-decomp} and
\[
t\overline{\lambda}(rt^{d/\alpha})
=\frac{1}{r^{\alpha/d}}r^{\alpha/d}t\overline{\lambda}(rt^{d/\alpha})
\le \frac{1}{r^{\alpha/d}}\int_{(rt^{d/\alpha},\infty)}z^{\alpha/d}\,\lambda(\d z),
\]
we have (i).

(2) Let $\overline{\lambda}(r)$ satisfy the condition in (ii).
For  $r>1$,
\begin{align*}
\overline{\tau}(r)
&=m\otimes\lambda\left(\left\{(s,z)\in (0,t]\times (0,1] :
z/s^{d/\alpha}
>r\right\}\right)\\
&\quad +m\otimes\lambda\left(\left\{(s,z)\in (0,t]\times (1,\infty) :
z/s^{d/\alpha}
>r\right\}\right)\\
&=\int_{(0,1]}\left(t\wedge \left(\frac{z}{r}\right)^{\alpha/d}\right)\,\lambda(\d z)
+\int_0^t \overline{\lambda}(rs^{d/\alpha}\vee 1)\,\d s\\
&=\frac{1}{r^{\alpha/d}}\int_{(0,rt^{d/\alpha}\wedge1]}z^{\alpha/d}\,\lambda(\d z)
+t\lambda((rt^{d/\alpha}\wedge1,1])\\
&+\int_0^{t\wedge (1/r)^{\alpha/d}
}\overline{\lambda}(1)\,\d s
+\int_{t\wedge (1/r)^{\alpha/d}
}^t\overline{\lambda}(rs^{d/\alpha})\,\d s.
\end{align*}

Since
$$\int_{t\wedge (1/r)^{\alpha/d}
}^t\overline{\lambda}(rs^{d/\alpha})\,\d s
=\frac{\alpha}{d}\frac{1}{r^{\alpha/d}}
\int_{1\wedge (r t^{d/\alpha})
}^{rt^{d/\alpha}}u^{\alpha/d-1}\overline{\lambda}(u)\,\d u,$$
we see that if $\beta\in [0,\alpha/d)$, then  as $r\rightarrow\infty$,
\[
\overline{\tau}(r)
\sim \frac{\alpha/d}{\alpha/d-\beta}\frac{1}{r^{\alpha/d}}(rt^{d/\alpha})^{\alpha/d-\beta}l(rt^{d/\alpha})
\sim \frac{\alpha}{\alpha-d\beta}t^{1-d\beta/\alpha}\frac{l(r)}{r^{\beta}}
=\frac{\alpha}{\alpha-d\beta}t^{1-d\beta/\alpha}\overline{\lambda}(r).
\]
If $\beta=\alpha/d$, then the function
\[
f(r):=\int_1^{rt^{d/\alpha}}u^{\alpha/d-1}\overline{\lambda}(u)\,\d u
=\int_1^{rt^{d/\alpha}}\frac{l(u)}{u}\,\d u
\]
is slowly varying at infinity by \cite[p.26, Proposition 1.5.9a]{BGT89}.
Hence the assertion also follows from the arguments above.
\end{proof}

\begin{lem}\label{lem:eta-tail}
Suppose that \eqref{eq:tau-conv} holds.
Let $l(r)$ be a slowly varying function at infinity, and
let $A\subset \R^d$ be a bounded Borel set with
$0<|\overline{A}|<\infty$.
\begin{enumerate}
\item[{\rm (i)}]
Assume that either of the following conditions holds{\rm :}
\begin{enumerate}
\item[{\rm (a)}]
$\int_{(1,\infty)}z^{(\alpha/d)\vee (d/(d+\alpha))}\,\lambda(\d z)<\infty${\rm ;}
\item[{\rm (b)}]
$d/(d+\alpha)<\alpha/d$ and
$\overline{\lambda}(r)=l(r)/r^{\alpha/d}$ for $r\ge 1$.
\end{enumerate}
Then
\begin{equation}\label{eq:lim-ratio-}
\lim_{r\rightarrow\infty}\frac{\overline{\eta_A}(r)}{\overline{\tau}(r)}=|\overline{A}|g(0)^{\alpha/d}.
\end{equation}
\item[{\rm (ii)}]
Let $d/(d+\alpha)<\alpha/d$.
Assume that for some $\beta\in (d/(d+\alpha),\alpha/d)$,
$\overline{\lambda}(r)=l(r)/r^{\beta} $ for $r\ge 1$.
Then
\begin{equation}\label{eq:lim-ratio}
\lim_{r\rightarrow\infty}\frac{\overline{\eta_A}(r)}{\overline{\tau}(r)}
=|\overline{A}|g(0)^{\beta}+\frac{1}{t^{1-d\beta/\alpha}}\left(1-\frac{d\beta}{\alpha}\right)
\int_{(0,t]\times (\overline{A})^c}s^{-d\beta/\alpha}
g\left(\frac{d(y,A)}{s^{1/\alpha}}\right)^{\beta}\,\d s\,\d y.
\end{equation}
\end{enumerate}
\end{lem}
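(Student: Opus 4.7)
The plan is to split $\overline{\eta_A}(r)$ according to whether $y\in\overline{A}$ or $y\in\overline{A}^c$. Since $g$ attains its maximum $g(0)$ at $0$, the contribution from $y\in\overline{A}$ equals $|\overline{A}|\,\overline{\tau}(r/g(0))$ exactly, and the excess is
\[
E_A(r):=\int_{\overline{A}^c}\int_0^t \overline{\lambda}\!\left(\frac{rs^{d/\alpha}}{g(d(y,A)/s^{1/\alpha})}\right)ds\,dy.
\]
Under either set of hypotheses, Lemma \ref{lem:tau} shows $\overline{\tau}$ is regularly varying at infinity, with index $-\alpha/d$ in case (i) (both (a) and (b) imply $\int z^{\alpha/d}\,\lambda(dz)<\infty$) and index $-\beta$ in case (ii); consequently $\overline{\tau}(r/g(0))/\overline{\tau}(r)\to g(0)^{\alpha/d}$ or $g(0)^{\beta}$ respectively, producing the first term of each claimed limit.

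For case (ii), I would use the uniform convergence theorem for regularly varying functions together with Potter-bound domination to deduce
\[
E_A(r)\sim \overline{\lambda}(r)\int_{(0,t]\times\overline{A}^c}\frac{g(d(y,A)/s^{1/\alpha})^{\beta}}{s^{d\beta/\alpha}}\,ds\,dy.
\]
The limit integral is finite because $\beta>d/(d+\alpha)$, combined with $g(u)\sim c_{d,\alpha}/u^{d+\alpha}$, kills the tail as $d(y,A)\to\infty$, while $\beta<\alpha/d$ preserves integrability in $s$ near $0$. Coupling with $\overline{\tau}(r)\sim\frac{\alpha}{\alpha-d\beta}t^{1-d\beta/\alpha}\overline{\lambda}(r)$ from Lemma \ref{lem:tau}(ii) yields \eqref{eq:lim-ratio}.

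For case (i), I need $E_A(r)=o(\overline{\tau}(r))$. Under (b), the same regular-variation argument at the critical index $\beta=\alpha/d$ gives $E_A(r)\sim C\,\overline{\lambda}(r)$ for a finite constant $C$ (finiteness uses $d/(d+\alpha)<\alpha/d$), and the critical case of Lemma \ref{lem:tau}(ii) combined with Karamata theory forces $\overline{\lambda}(r)/\overline{\tau}(r)\to 0$: either $l(r)\to 0$ when $\int_1^\infty l(u)/u\,du<\infty$, or $l(r)/\int_1^r l(u)/u\,du\to 0$ by slow variation of the integral otherwise. Under (a), where no regular variation of $\overline{\lambda}$ is available, I would instead change variables $u=rs^{d/\alpha}/z$ in $E_A(r)$ to recast
\[
r^{\alpha/d}E_A(r)=\frac{\alpha}{d}\int_{(0,\infty)} z^{\alpha/d}\int_0^{g(0)\wedge rt^{d/\alpha}/z}\phi_A\!\left((uz/r)^{1/d}g^{-1}(u)\right)u^{\alpha/d-1}\,du\,\lambda(dz),
\]
where $\phi_A(\rho):=|\{y\in\overline{A}^c:d(y,A)<\rho\}|$, and verify by dominated convergence that this tends to $0$: pointwise the inner integrand vanishes (using $g^{-1}(u)\preceq u^{-1/(d+\alpha)}$ near $0$ and $g^{-1}(u)\downarrow 0$ as $u\uparrow g(0)$), and a dominating envelope is constructed from $\phi_A(\rho)\preceq\rho+\rho^d$ together with the splitting of $\{z<D\}$ and $\{z\geq D\}$ at $D:=rt^{d/\alpha}/g(0)$.

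The principal technical obstacle is the dominated convergence step in (i)(a): the summands $\rho$ and $\rho^d$ in $\phi_A(\rho)\preceq\rho+\rho^d$ produce contributions controlled respectively by $z^{\alpha/d}$-type integrability on $\{z<D\}$ and by $z^{d/(d+\alpha)}$-type integrability on $\{z\geq D\}$ (where the $u$-integration is truncated at $rt^{d/\alpha}/z$ and the singularity $g^{-1}(u)^d\sim u^{-d/(d+\alpha)}$ at $u\downarrow 0$ fixes the exponent). This matches precisely the double exponent $(\alpha/d)\vee(d/(d+\alpha))$ in (a), explaining why neither moment condition alone would suffice.
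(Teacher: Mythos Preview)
Your split of $\overline{\eta_A}(r)$ into the exact contribution $|\overline{A}|\,\overline{\tau}(r/g(0))$ from $y\in\overline{A}$ plus the excess $E_A(r)$ over $\overline{A}^c$ is the natural decomposition, and your treatment of cases (ii) and (i)(b) via Potter bounds and Lemma~\ref{lem:tau} is essentially the paper's argument. The paper, however, works with an $\varepsilon$-buffer: it bounds $\overline{\eta_A}(r)$ above via $A^\varepsilon$ (obtaining ${\rm(I)}\le |A^\varepsilon|\,\overline{\tau}(r/g(0))$) and below via $\overline{A}$, shows the far piece ${\rm(II)}$ over $(A^\varepsilon)^c$ is $o(\overline{\tau}(r))$ for each fixed $\varepsilon>0$, and only then lets $\varepsilon\to0$. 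The buffer guarantees $d(y,A)\ge\varepsilon$ on the far piece, which is what makes the Potter-bound domination uniform: the argument $rs^{d/\alpha}/g(d(y,A)/s^{1/\alpha})\succeq \varepsilon^{d+\alpha}r/t$ stays large. Your direct split at $\overline{A}^c$ loses this uniformity near $\partial A$, so the dominated-convergence justification in your (ii) and (i)(b) needs extra care for $y$ close to $\overline{A}$.

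The more serious gap is in (i)(a). First, the bound $\phi_A(\rho)\preceq\rho+\rho^d$ for small $\rho$ is not available for a general bounded Borel set; without boundary regularity one only has $\phi_A(\rho)\to0$ and $\phi_A(\rho)\preceq 1+\rho^d$. Second, even with the latter bound, the $\rho^d$-part of your envelope for $r^{\alpha/d}E_A(r)$ is $z^{\alpha/d}\,t^{d/\alpha}g^{-1}(u)^d u^{\alpha/d-1}$, and since $g^{-1}(u)^d\asymp u^{-d/(d+\alpha)}$ near $u=0$, the $u$-integral diverges precisely when $\alpha/d\le d/(d+\alpha)$. Your proposed repair via the $r$-dependent split $\{z<D\}\cup\{z\ge D\}$ is no longer dominated convergence; it is a direct estimate, and on $\{z\ge D\}$ the $\rho^d$-contribution is only $o(r^{-d/(d+\alpha)})$, which is \emph{not} $o(r^{-\alpha/d})$ in the sub-case $\alpha/d> d/(d+\alpha)$ of (a) unless one argues differently there. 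The paper treats the two sub-cases of (a) separately: when $d/(d+\alpha)\ge\alpha/d$ it applies Chebyshev on $(A^\varepsilon)^c$ with exponent $\theta=d/(d+\alpha)$ inside a large ball and $\theta<d/(d+\alpha)$ outside (where $d(y,A)\asymp|y|$), yielding ${\rm(II)}=o(r^{-d/(d+\alpha)})$, and then invokes $d/(d+\alpha)\ge\alpha/d$ with Lemma~\ref{lem:tau}(i). When $d/(d+\alpha)<\alpha/d$ it uses Chebyshev with $\theta=\alpha/d$ together with the finiteness of $\int_{(0,t]\times(A^\varepsilon)^c}s^{-1}g(d(y,A)/s^{1/\alpha})^{\alpha/d}\,ds\,dy$ (which again needs the buffer $d(y,A)\ge\varepsilon$) and dominated convergence in $z$ alone.
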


\begin{proof}
(1) We assume that (a) or (b) holds.
Fix a bounded Borel
set $A\subset \R^d$
with $0<|\overline{A}|<\infty$.
For $\varepsilon>0$, let $A^{\varepsilon}=\{y\in \R^d : d(y,A)<\varepsilon\}$.
Then for any $r>1$,
\begin{align*}
\overline{\eta_A}(r)
&=\nu\left(\left\{(s,y,z)\in (0,t]\times
A^{\varepsilon}
\times (0,\infty)
: \frac{z}{s^{d/\alpha}}g\left(\frac{d(y,A)}{s^{1/\alpha}}\right)>r
\right\}\right)\\
&\quad+\nu\left(\left\{(s,y,z)\in (0,t]\times
(A^{\varepsilon})^c
\times (0,\infty)
: \frac{z}{s^{d/\alpha}}g\left(\frac{d(y,A)}{s^{1/\alpha}}\right)>r
\right\}\right)\\
&={\rm (I)}+{\rm (II)}.
\end{align*}
Since $0\le d(y,A)<\varepsilon$ for any $y\in A^{\varepsilon}$,
we have
\[
{\rm (I)}\le \nu\left(\left\{(s,y,z)\in (0,t]\times A^{\varepsilon}
\times (0,\infty)
: \frac{z}{s^{d/\alpha}}>\frac{r}{g(0)}
\right\}\right)
=|A^{\varepsilon}|\overline{\tau}\left(\frac{r}{g(0)}\right).
\]
Then by  Lemma \ref{lem:tau},
\begin{equation}\label{eq:i-sup}
\limsup_{r\rightarrow\infty}\frac{{\rm (I)}}{\overline{\tau}(r)}
\le |A^{\varepsilon}|\limsup_{r\rightarrow\infty}\frac{\overline{\tau}(r/g(0))}{\overline{\tau}(r)}
=|A^{\varepsilon}|g(0)^{\alpha/d}\rightarrow|\overline{A}|g(0)^{\alpha/d},
\quad \varepsilon\rightarrow0.
\end{equation}
We here note that if (a) holds, then
\[
\int_{(1,\infty)}z^{\alpha/d}\,\lambda(\d z)
\le \int_{(1,\infty)}z^{(\alpha/d)\vee (d/(d+\alpha))}\,\lambda(\d z)<\infty,
\]
and so
Lemma \ref{lem:tau} (i) is applicable;
if (b) holds with $\int_1^\infty l(u)/u\,\d u<\infty$, then Lemma \ref{lem:tau} (ii) applies;
if (b) holds with $\int_1^\infty l(u)/u\,\d u=\infty$,
then the desired assertion follows from Lemma \ref{lem:tau} (ii) and \cite[p.26, Proposition 1.5.9a]{BGT89}.

On the other hand, since
$d(y,A)=0$ for any $y\in \overline{A}$, we have
\begin{equation*}
\begin{split}
\overline{\eta_A}(r)
&\ge \nu\left(\left\{(s,y,z)\in (0,t]\times \overline{A} \times (0,\infty)
: \frac{z}{s^{d/\alpha}}g\left(\frac{d(y,A)}{s^{1/\alpha}}\right)>r\right\}\right)\\
&=\nu\left(\left\{(s,y,z)\in (0,t]\times \overline{A} \times (0,\infty)
: \frac{z}{s^{d/\alpha}}g(0)>r\right\}\right)
=|\overline{A}|\overline{\tau}\left(\frac{r}{g(0)}\right).
\end{split}
\end{equation*}
Then by Lemma \ref{lem:tau} again,
\begin{equation}\label{eq:lim-inf}
\liminf_{r\rightarrow\infty}\frac{\overline{\eta_A}(r)}{\overline{\tau}(r)}
\ge |\overline{A}|g(0)^{\alpha/d}.
\end{equation}
Hence if we can prove that
\begin{equation}\label{eq:ii-lim}
\lim_{r\rightarrow\infty}\frac{{\rm (II)}}{\overline{\tau}(r)}=0,
\end{equation}
then by \eqref{eq:i-sup}, we will obtain
\[
\limsup_{r\rightarrow\infty}\frac{\overline{\eta_A}(r)}{\overline{\tau}(r)}
\le |\overline{A}|g(0)^{\alpha/d}.
\]
Combining this with \eqref{eq:lim-inf}, we
will
get \eqref{eq:lim-ratio-}.

Let us prove \eqref{eq:ii-lim}.
We first assume (a) with $d/(d+\alpha)\ge \alpha/d$
so that $\int_{(1,\infty)}z^{d/(d+\alpha)}\,\lambda(\d z)<\infty$.
For $R>0$, let $B(R)=\{y\in \R^d : |y|\le R\}$ and
\begin{align*}
{\rm (II)}
&=\nu\left(\left\{(s,y,z)\in (0,t]\times (A^{\varepsilon})^c\times (0,1]
: \frac{z}{s^{d/\alpha}}g\left(\frac{d(y,A)}{s^{1/\alpha}}\right)>r
\right\}\right)\\
&\quad+\nu\left(\left\{(s,y,z)\in (0,t]\times ((A^{\varepsilon})^c\cap B(R))\times (1,\infty)
: \frac{z}{s^{d/\alpha}}g\left(\frac{d(y,A)}{s^{1/\alpha}}\right)>r
\right\}\right)\\
&\quad+\nu\left(\left\{(s,y,z)\in (0,t]\times ((A^{\varepsilon})^c\cap B(R)^c)\times (1,\infty)
: \frac{z}{s^{d/\alpha}}g\left(\frac{d(y,A)}{s^{1/\alpha}}\right)>r
\right\}\right)\\
&={\rm (II)}_1+{\rm (II)}_2+{\rm (II)}_3.
\end{align*}

For $\theta>0$, we have by the Chebyshev inequality and \eqref{eq:g^{-1}-asymp},
\begin{equation*}
\begin{split}
{\rm (II)}_2
&\le \frac{1}{r^{\theta}}
\int_{(0,t]\times ((A^{\varepsilon})^c\cap B(R))\times (1,\infty)}
\left(\frac{z}{s^{d/\alpha}}g\left(\frac{d(y,A)}{s^{1/\alpha}}\right)\right)^{\theta}
{\bf 1}_{\{zg(d(y,A)/s^{1/\alpha})/s^{d/\alpha}>r\}}\,\d s\,\d y\lambda(\d z)\\
&\preceq \frac{1}{r^{\theta}}
\int_{(0,t]\times ((A^{\varepsilon})^c\cap B(R))\times (1,\infty)}
\left(\frac{z}{s^{d/\alpha}}\frac{s^{1+d/\alpha}}{d(y,A)^{d+\alpha}}\right)^{\theta}
{\bf 1}_{\{zg(0)/s^{d/\alpha}>r\}}\,\d s\,\d y\lambda(\d z)\\
&\preceq \frac{1}{\varepsilon^{\theta(d+\alpha)}}
\frac{1}{r^{\theta}}
\int_{(1,\infty)}\left(\int_0^{t\wedge (zg(0)/r)^{\alpha/d}}s^{\theta}\,\d s\right)z^{\theta}\,\lambda(\d z).
\end{split}
\end{equation*}
In particular, if we take $\theta=d/(d+\alpha)$, then
\[
{\rm (II)}_2\preceq
\frac{1}{\varepsilon^d}
\frac{1}{r^{d/(d+\alpha)}}
\int_{(1,\infty)}\left(\int_0^{t\wedge (zg(0)/r)^{\alpha/d}}s^{d/(d+\alpha)}\,\d s\right)
z^{d/(d+\alpha)}\,\lambda(\d z)=o(r^{-d/(d+\alpha)}).
\]

If $R>0$ is large enough, then for any $y\in B(R)^c$, $d(y,A)\asymp |y|$.
Hence by the Chebyshev inequality again, we have for any $\theta\in (0,d/(d+\alpha))$,
\begin{equation*}
\begin{split}
{\rm (II)}_3
&\le \frac{1}{r^{\theta}}
\int_{(0,t]\times ((A^{\varepsilon})^c\cap B(R)^c)\times (1,\infty)}
\left(\frac{z}{s^{d/\alpha}}g\left(\frac{d(y,A)}{s^{1/\alpha}}\right)\right)^{\theta}
{\bf 1}_{\{zg(d(y,A)/s^{1/\alpha})/s^{d/\alpha}>r\}}\,\d s\,\d y\lambda(\d z)\\
&\preceq \frac{1}{r^{\theta}}
\int_{(0,t]\times ((A^{\varepsilon})^c\cap B(R)^c)\times (1,\infty)}
\left(\frac{z}{s^{d/\alpha}}\frac{s^{1+d/\alpha}}{|y|^{d+\alpha}}\right)^{\theta}
{\bf 1}_{\{c_1sz/|y|^{d+\alpha}>r\}}{\bf 1}_{\{zg(0)/s^{d/\alpha}>r\}}\,\d s\,\d y\lambda(\d z)\\
&\preceq \frac{1}{\varepsilon^{\theta(d+\alpha)}}
\frac{1}{r^{\theta}}
\int_{(1,\infty)}\left\{\int_0^{t\wedge (zg(0)/r)^{\alpha/d}}
\left(\int_{|y|<(c_1 sz/r)^{1/(d+\alpha)}}\frac{1}{|y|^{\theta(d+\alpha)}}\,\d y\right)
s^{\theta}\,\d s\right\}z^{\theta}\,\lambda(\d z)\\
&\asymp \frac{1}{\varepsilon^{\theta(d+\alpha)}}
\frac{1}{r^{d/(d+\alpha)}}
\int_{(1,\infty)}\left(\int_0^{t\wedge (zg(0)/r)^{\alpha/d}}s^{d/(d+\alpha)}
\,\d s\right)z^{d/(d+\alpha)}\,\lambda(\d z)\\
&=o(r^{-d/(d+\alpha)}).
\end{split}
\end{equation*}
Following the calculation for ${\rm (II)}_2$ and ${\rm (II)}_3$,
we also obtain for any $\theta\in (0,d/(d+\alpha))$,
\begin{equation*}
\begin{split}
{\rm (II)}_1
&\preceq
\frac{1}{\varepsilon^d}
\frac{1}{r^{d/(d+\alpha)}}
\int_{(0,1]}\left(\int_0^{t\wedge (zg(0)/r)^{\alpha/d}}s^{d/(d+\alpha)}\,\d s\right)
z^{d/(d+\alpha)}\,\lambda(\d z)\\
&\quad +\frac{1}{\varepsilon^{\theta(d+\alpha)}}
\frac{1}{r^{d/(d+\alpha)}}
\int_{(0,1]}\left(\int_0^{t\wedge (zg(0)/r)^{\alpha/d}}s^{d/(d+\alpha)}
\,\d s\right)z^{d/(d+\alpha)}\,\lambda(\d z)\\
&\asymp \frac{1}{r^{1+\alpha/d}}\int_{(0,1]}z^{1+\alpha/d}\,\lambda(\d z)
=o(r^{-d/(d+\alpha)}).
\end{split}
\end{equation*}
Therefore,
\[
{\rm (II)}=o(r^{-d/(d+\alpha)}), \quad r\rightarrow\infty.
\]
Furthermore, since $d/(d+\alpha)\ge \alpha/d$ by assumption,
Lemma \ref{lem:tau} (i) yields \eqref{eq:ii-lim}.

We next assume (a) with $d/(d+\alpha)< \alpha/d$, and so
$\int_{(1,\infty)}z^{\alpha/d}\,\lambda(\d z)<\infty$.
By the Schwarz inequality,
\begin{equation}\label{eq:schwarz}
\begin{split}
{\rm (II)}
&\le \frac{1}{r^{\alpha/d}}
\int_{(0,t]\times (A^{\varepsilon})^c
\times (0,\infty)}
\frac{z^{\alpha/d}}{s}
g\left(\frac{d(y,A)}{s^{d/\alpha}}\right)^{\alpha/d}
{\bf 1}_{\{zg(d(y,A)/s^{d
/\alpha})/s>r\}}
\,\d s\,\d y\,\lambda(\d z)\\
&\le \frac{1}{r^{\alpha/d}}\int_{(0,\infty)}z^{\alpha/d}\,\lambda(\d z)
\left(\int_{(0,t]\times
(A^{\varepsilon})^c}
\frac{1}{s}g\left(\frac{d(y,A)}{s^{d/\alpha}}\right)^{\alpha/d}
{\bf 1}_{\{
s<zg(0)/r\}}\,\d s\,\d y\right).
\end{split}
\end{equation}
Since
$$
\frac{d(y,A)}{s^{1/\alpha}}\ge \frac{\varepsilon}{t^{1/\alpha}}, \quad y\in (A^{\varepsilon})^c, \, s\in (0,t],
$$
\eqref{eq:g-asymp} yields
\begin{equation}\label{eq:g-compare}
g\left(\frac{d(y,A)}{s^{1/\alpha}}\right)
\asymp
\frac{s^{1+d/\alpha}}{d(y,A)^{d+\alpha}},
\quad y\in (A^{\varepsilon})^c, \, s\in (0,t].
\end{equation}
Namely,
\begin{equation}\label{eq:g-compare-1}
\int_{(0,t]\times (A^{\varepsilon})^c}
\frac{1}{s}g\left(\frac{d(y,A)}{s^{1/\alpha}}\right)^{\alpha/d}
\,\d s\,\d y
\asymp
\int_0^t s^{\alpha/d}\,\d s
\int_{(A^{\varepsilon})^c}\frac{1}{d(y,A)^{\alpha(d+\alpha)/d}}\,\d y.
\end{equation}
Furthermore, since $A$ is bounded by assumption, there exists $R_0>0$ such that
$\overline{A}\subset B(R_0)$.
Then for any $y\in B(2R_0)^c$,
since
\[
|y|\le d(y,A)+d(0,A)\le d(y,A)+R_0\le d(y,A)+\frac{1}{2}|y|,
\]
we have $d(y,A)\ge |y|/2$.
Note also that $d(y,A)\ge \varepsilon$ for any $y\in (A^{\varepsilon})^c$.
Therefore,
\begin{equation}\label{eq:dist-int}
\begin{split}
&\int_{(A^{\varepsilon})^c}\frac{1}{d(y,A)^{\alpha(d+\alpha)/d}}\,\d y\\
&=\int_{(A^{\varepsilon})^c\cap B(2R_0)}
\frac{1}{d(y,A)^{\alpha(d+\alpha)/d}}\,\d y
+\int_{(A^{\varepsilon})^c\cap B(2R_0)^c}
\frac{1}{d(y,A)^{\alpha(d+\alpha)/d}}\,\d y\\
&\le \frac{|B(2R_0)|}
{\varepsilon^{\alpha(d+\alpha)/d}}
+2
^{\alpha(d+\alpha)/d}\int_{B(2R_0)^c}
\frac{1}{|y|^{\alpha(d+\alpha)/d}}\,\d y
<\infty.
\end{split}
\end{equation}
The integrability at the last inequality  follows by the condition $d/(d+\alpha)<\alpha/d$.
Hence by the dominated convergence theorem with \eqref{eq:schwarz}, \eqref{eq:g-compare-1}
and \eqref{eq:dist-int},
we have ${\rm (II)}=o(r^{-\alpha/d})$ as $r\rightarrow\infty$.
Combining this with Lemma \ref{lem:tau} (ii), we arrve at \eqref{eq:ii-lim}.

We finally assume (b).
Then
\begin{equation}\label{eq:ii-calc}
\begin{split}
{\rm (II)}
&
\asymp
\int_{(0,t]\times (A^{\varepsilon})^c}
\overline{\lambda}\left(\frac{s^{d/\alpha}r}{g\left(d(y,A)/s^{1/\alpha}\right)}\right)
\,\d s\,\d y\\
&=\frac{1}{r^{\alpha/d}}
\int_{(0,t]\times (A^{\varepsilon})^c}
l\left(\frac{s^{d/\alpha}r}{g\left(d(y,A)/s^{1/\alpha}\right)}\right)
\frac{1}{s}g\left(\frac{d(y,A)}{s^{1/\alpha}}\right)^{\alpha/d}\,\d s\, \d y.
\end{split}
\end{equation}

By the Potter bound (\cite[p.25, Theorem 1.5.6]{BGT89}),
for any $\delta>0$ and $C>1$,
there exists $c>0$ such that for any $x,y\in \R$ with $x,y\ge c$,
\[
\frac{l(y)}{l(x)}\le C\left(\left(\frac{y}{x}\right)^{\delta}\vee\left(\frac{y}{x}\right)^{-\delta}\right).
\]
On the other hand, we have by
\eqref{eq:g-compare},
\[
\frac{s^{d/\alpha}r}{g(d(y,A)/s^{1/\alpha})}
\asymp \frac{d(y,A)^{d+\alpha}r}{s}
\succeq
\frac{\varepsilon^{d+\alpha}r}{t}
, \quad y\in (A^{\varepsilon})^c, \, s\in (0,t], \, r>1.
\]
Hence for any $y\in (A^{\varepsilon})^c$, $s\in (0,t]$
and $r>1$, (by taking $C$ large if necessary),
\begin{equation}\label{eq:potter-bound}
l\left(\frac{s^{d/\alpha}r}{g(d(y,A)/s^{1/\alpha})}\right)/l(r)
\le C\left(\left(\frac{s^{d/\alpha}}{g(d(y,A)/s^{1/\alpha})}\right)^{\delta}\vee
\left(\frac{s^{d/\alpha}}{g(d(y,A)/s^{1/\alpha})}\right)^{-\delta}\right).
\end{equation}
Note that the right hand side above is independent of $r$.
By \eqref{eq:g-compare}, we
have for any $\varepsilon>0$ and $\delta\in {\mathbb R}$,
\[
\int_{(0,t]\times (A^{\varepsilon})^c}
\left(
\frac{1}{s}g\left(\frac{d(y,A)}{s^{1/\alpha}}\right)^{\alpha/d}\right)^{1-\delta d/\alpha}\,\d s\, \d y
\asymp
\int_{(0,t]\times (A^{\varepsilon})^c}
\left(
\frac{s^{\alpha/d}}{d(y,A)^{\alpha(d+\alpha)/d}}\right)^{1-\delta d/\alpha}\,\d s\, \d y.
\]
As in \eqref{eq:dist-int}, we can show that if $d<\alpha(d+\alpha)/d$,
then there exists $\delta_0>0$ such that the last integral above is convergent
for any $\delta\in \R$ with $|\delta|<\delta_0$.

Combining the argument above  with \eqref{eq:potter-bound}
and
$$
l\left(\frac{s^{d/\alpha}r}{g(d(y,A)/s^{1/\alpha})}\right)/l(r)\rightarrow 1, \quad r\rightarrow\infty,
$$
we can apply the dominated convergence theorem for \eqref{eq:ii-calc} to obtain
\begin{equation}\label{eq:ii-II}
{\rm (II)}
\asymp
\frac{l(r)}{r^{\alpha/d}}
\int_{(0,t]\times (A^{\varepsilon})^c}
\frac{1}{s}g\left(\frac{d(y,A)}{s^{1/\alpha}}\right)^{\alpha/d}\,\d s\,\d y, \quad r\rightarrow\infty.
\end{equation}
Since it follows by \cite[p.\ 26, Proposition 1.5.9a]{BGT89}
that
$$
\lim_{r\rightarrow\infty}\frac{1}{l(r)}\int_1^r\frac{l(u)}{u}\,\d u=\infty,
$$
Lemma \ref{lem:tau} (ii)
yields \eqref{eq:ii-lim}.
The proof is complete under the condition (i).

(2)
Assume the condition in (ii).
Then as in \eqref{eq:ii-calc},
\[
{\rm (II)}
=\overline{\lambda}(r)
\int_{(0,t]\times(A^{\varepsilon})^c}
\frac{1}{l(r)}l\left(s^{d/\alpha}g\left(\frac{d(y,A)}{s^{1/\alpha}}\right)^{-1}r\right)
\frac{1}{s^{d\beta/\alpha}}g\left(\frac{d(y,A)}{s^{1/\alpha}}\right)^{\beta}\,\d s\,\d y.
\]
Since $\beta>d/(d+\alpha)$,
we follow the proof of \eqref{eq:ii-II} to see that
\[
{\rm (II)}
\sim \overline{\lambda}(r)\int_{(0,t]\times(A^{\varepsilon})^c}
\frac{1}{s^{d\beta/\alpha}}g\left(\frac{d(y,A)}{s^{1/\alpha}}\right)^{\beta}\,\d s\,\d y,
\quad r\rightarrow\infty,
\]
whence
\[
\lim_{\varepsilon\rightarrow +0}\lim_{r\rightarrow\infty}\frac{{\rm (II)}}{\overline{\lambda}(r)}
=\int_{(0,t]\times(\overline{A})^c}
\frac{1}{s^{d\beta/\alpha}}g\left(\frac{d(y,A)}{s^{1/\alpha}}\right)^{\beta}\,\d s\,\d y.
\]
Since $g$ is bounded and $\beta<\alpha/d$, we
also
have in the same way as above
\[
\lim_{\varepsilon\rightarrow +0}
\lim_{r\rightarrow\infty}\frac{{\rm (I)}}{\overline{\lambda}(r)}
=\int_{(0,t]\times \overline{A}}
\frac{1}{s^{d\beta/\alpha}}g\left(\frac{d(y,A)}{s^{1/\alpha}}\right)^{\beta}\,\d s\,\d y.
\]
These two equalities above yield  as $r\rightarrow \infty$ and then $\varepsilon\rightarrow+0$,
\begin{equation*}
\begin{split}
\frac{\overline{\eta_A}(r)}{\overline{\lambda}(r)}
&=\frac{{\rm (I)}}{\overline{\lambda}(r)}+\frac{{\rm (II)}}{\overline{\lambda}(r)}
\rightarrow \int_{(0,t]\times \R^d}
\frac{1}{s^{d\beta/\alpha}}g\left(\frac{d(y,A)}{s^{1/\alpha}}\right)^{\beta}\,\d s\,\d y\\
&=\frac{\alpha}{\alpha-d\beta}t^{1-d\beta/\alpha}|\overline{A}|g(0)^{\beta}
+
\int_{(0,t]\times (\overline{A})^c}s^{-d\beta/\alpha}g\left(\frac{d(y,A)}{s^{1/\alpha}}\right)^{\beta}\,\d s\,\d y.
\end{split}
\end{equation*}
Combining this with Lemma \ref{lem:tau} (ii),
we complete the proof under the condition in (ii).
\end{proof}

By Proposition \ref{thm:sup-tail} with Lemmas \ref{lem:tau-finite}
and \ref{lem:eta-tail},
we have
\begin{thm}\label{cor:sup-tail}
Let $A\subset \R^d$ be a bounded Borel set with $0<|\overline{A}|<\infty$.
Suppose that
\[
\begin{dcases}
\int_{(0,1]}z^{\alpha/d} \,\lambda(\d z)<\infty,&\quad \alpha\le d,\\
\int_{(0,1]}z^{\gamma}\,\lambda(\d z)<\infty,&\quad\alpha>d=1
\end{dcases}
\]
with $\gamma\in ((1+\alpha)/2,\alpha)$,
and that either of the following conditions holds{\rm :}
\begin{itemize}
\item[{\rm (a)}] $\int_{(1,\infty)}z^{(\alpha/d)\vee (d/(d+\alpha))}\,\lambda(\d z)<\infty${\rm ;}
\item[{\rm (b)}]
$d/(d+\alpha)<\alpha/d$ and
$\overline{\lambda}(r)=l(r)/r^{\beta}$ for $r\ge 1$ with
$\beta\in (d/(d+\alpha),\alpha/d]$ and $l(r)$ being a slowly varying function at infinity.
\end{itemize}
Then,
the normalization of
$\overline{\eta_A}$ is subexponential and there is a constant $c_A>0$
such
that
\[
P\left(\sup_{x\in A}X(t,x)>r\right)\sim \overline{\eta_A}(r)\sim c_A\overline{\tau}(r), \quad r\rightarrow\infty.
\]
\end{thm}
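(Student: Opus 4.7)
The plan is to assemble the statement from the three preparatory results already in place: Proposition \ref{thm:sup-tail}, Lemma \ref{lem:tau-finite}, and Lemma \ref{lem:eta-tail}. The program is to verify the hypotheses of each, transfer the extended regular variation of $\overline{\tau}$ to $\overline{\eta_A}$, and then deduce the subexponentiality needed to invoke Proposition \ref{thm:sup-tail}.

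First, condition \eqref{eq:big-conv} required by Proposition \ref{thm:sup-tail} follows from (a) (since $(\alpha/d)\vee(d/(d+\alpha)) \ge d/(d+\alpha)$) or from (b) (since $\beta > d/(d+\alpha)$ combined with the prescribed power-law tail of $\overline{\lambda}$ forces $\int_{(1,\infty)}z^{d/(d+\alpha)}\lambda(\d z)<\infty$). The small-jump assumption \eqref{eq:xa-conv-1} is imposed by Proposition \ref{thm:sup-tail} verbatim, and it also implies the integrability \eqref{eq:tau-conv} required by Lemma \ref{lem:tau-finite} and Lemma \ref{lem:eta-tail} by a direct exponent comparison on $(0,1]$: when $\alpha \ge d$, $\alpha/d \ge 1$ and $z^{\alpha/d} \le z$ there, while for $\alpha > d = 1$ we have $\gamma < \alpha = \alpha/d$, so integrability of $z^\gamma$ is the stronger requirement. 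Consequently, Lemma \ref{lem:tau-finite} guarantees that $\overline{\tau}$ is finite, continuous, decreasing, and of extended regular variation at infinity, and Lemma \ref{lem:eta-tail}, applied in the sub-case appropriate to whether $\beta = \alpha/d$ or $\beta < \alpha/d$, produces a strictly positive constant $c_A$ with
\begin{equation*}
\lim_{r \to \infty} \frac{\overline{\eta_A}(r)}{\overline{\tau}(r)} = c_A.
\end{equation*}

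This equivalence forces $\overline{\eta_A}$ itself to be of extended regular variation at infinity, so the subexponentiality argument from the proof of Theorem \ref{thm:eta-tail} applies verbatim: writing $h(r) = \overline{\eta_A}(r)/\overline{\eta_A}(1)$, extended regular variation yields both $h(r+s)/h(r)\to 1$ for each fixed $s > 0$ and $\limsup_{r\to\infty} h(r)/h(2r) < 1$, whence $h$ is subexponential by \cite[p.\ 429]{BGT89} and belongs to ${\cal S}_0$ in the sense of \cite[p.\ 297]{P07}. With subexponentiality in hand, Proposition \ref{thm:sup-tail} yields
\begin{equation*}
P\left(\sup_{x \in A} X(t,x) > r\right) \sim \overline{\eta_A}(r) \sim c_A \overline{\tau}(r), \quad r \to \infty,
\end{equation*}
which is the asserted conclusion.

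The step I expect to require the most care is the bookkeeping of the case analysis when invoking Lemma \ref{lem:eta-tail}: the value of the limiting constant $c_A$ and the correct sub-lemma to apply depend on whether we are in case (a), in the boundary case $\beta = \alpha/d$ of (b), or in the interior $\beta \in (d/(d+\alpha), \alpha/d)$ of (b); moreover, under (a) the argument for \eqref{eq:lim-ratio-} splits further according to whether $\alpha/d$ or $d/(d+\alpha)$ is larger. Aside from this casework, the proof is a direct assembly of the already-established machinery.
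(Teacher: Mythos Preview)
Your proposal is correct and follows essentially the same route as the paper: verify the standing hypotheses of Proposition \ref{thm:sup-tail}, Lemma \ref{lem:tau-finite}, and Lemma \ref{lem:eta-tail}; use Lemma \ref{lem:eta-tail} to get $\overline{\eta_A}(r)/\overline{\tau}(r)\to c_A$; transfer the extended regular variation of $\overline{\tau}$ to $\overline{\eta_A}$; deduce subexponentiality as in the proof of Theorem \ref{thm:eta-tail}; and finish with Proposition \ref{thm:sup-tail}. One small slip to clean up: your exponent-comparison sentence for deriving \eqref{eq:tau-conv} has the cases reversed---for $\alpha\le d$ the hypothesis \emph{is} $\int_{(0,1]}z^{\alpha/d}\,\lambda(\d z)<\infty$, so nothing needs to be compared, while for $\alpha>d=1$ the comparison $z^{\alpha}\le z^{\gamma}$ on $(0,1]$ (since $\gamma<\alpha=\alpha/d$) is what does the work; your phrase ``when $\alpha\ge d$'' should read ``when $\alpha\le d$''.
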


\begin{proof}
Since
\[
z^{d/(d+\alpha)}
=\frac{d}{d+\alpha}\int_0^z\frac{1}{u^{\alpha/(d+\alpha)}}\,\d u,
\]
we have by the Fubini theorem,
\begin{align*}
&
\int_{(1,\infty)}
z^{d/(d+\alpha)}\,\lambda(\d z)
=\frac{d}{d+\alpha}
\int_{(1,\infty)}
\left(\int_0^z \frac{1}{u^{\alpha/(d+\alpha)}}\,\d u\right)\,\lambda(\d z)\\
&=\frac{d}{d+\alpha}\int_{1}^{\infty}\frac{1}{u^{\alpha/(d+\alpha)}}\left(\int_{(u,\infty)}\,\lambda(\d z)\right)\,\d u
+\frac{d}{d+\alpha}\int_0^{1}\frac{1}{u^{\alpha/(d+\alpha)}}\left(
\int_{(1,\infty)}
\lambda(\d z)\right)\,\d u\\
&=\frac{d}{d+\alpha}\int_1^{\infty}\frac{\overline{\lambda}(u)}{u^{\alpha/(d+\alpha)}}\,\d u
+ \overline{\lambda}(1).
\end{align*}
So, under (b),  \eqref{eq:big-conv} is satisfied.
Hence, under the
current assumptions,
\eqref{eq:tau-conv} and \eqref{eq:big-conv} are valid.
In particular, $\overline{\tau}(r)<\infty$ for any $r>0$.
Then by Lemma \ref{lem:eta-tail},
we have for each $s>0$,
\[
\frac{\overline{\eta_A}(r+s)}{\overline{\eta_A}(r)}
=\frac{\overline{\tau}(r)}{\overline{\eta_A}(r)}
\frac{\overline{\eta_A}(r+s)}{\overline{\tau}(r+s)}
\frac{\overline{\tau}(r+s)}{\overline{\tau}(r)}
\sim \frac{\overline{\tau}(r+s)}{\overline{\tau}(r)}, \quad r\rightarrow\infty.
\]
Since
$\overline{\tau}(r)$ is of extended regular variation at infinity by
Lemma \ref{lem:tau-finite}
(ii),
we can show that $\overline{\eta_A}$ is subexponential
by following the proof of Theorem \ref{thm:eta-tail}.
Hence by Proposition \ref{thm:sup-tail} and Lemma \ref{lem:eta-tail},
we have the desired conclusion.
\end{proof}

\section{Limiting behaviors}\label{section5}
In this section, we study the limiting behavior in space of the mild solution
$X(t,x)$ to \eqref{eq:fractional-she} with L\'evy space-time white noise.

\subsection{Growth order in space of the local supremum}
We first reveal the growth order in space of the local supremum
in terms of the measure $\tau$.
\begin{thm}\label{thm:limsup-whole}
Let $f:(0,\infty)\to (0,\infty)$ be
nondecreasing.
Suppose that
$$
\begin{dcases}
\int_{(0,1]}z^{\alpha/d} \,\lambda(\d z)<\infty,&\quad \alpha\le d,\\
\int_{(0,1]}z^{\gamma}\,\lambda(\d z)<\infty,&\quad\alpha>d=1\end{dcases}
$$
with $\gamma\in ((1+\alpha)/2,\alpha)$, and that
 either of the following conditions holds:
 \begin{itemize}
 \item[{\rm (a)}] $\int_{(1,\infty)}z^{(\alpha/d)\vee (d/(d+\alpha))}\,\lambda(\d z)<\infty${\rm ;}
\item[{\rm (b)}]
$d/(d+\alpha)<\alpha/d$ and
$\overline{\lambda}(r)=l(r)/r^{\beta}$ for $r\ge 1$
with $\beta\in (d/(d+\alpha),\alpha/d]$ and $l(r)$ being a slowly varying function at infinity.
\end{itemize}
Then, \[
\lim_{r\rightarrow\infty}\frac{\sup_{|x|\le r}X(t,x)}{f(r)}=0, \quad \text{$P$-a.s.}
\] or
\[
\limsup_{r\rightarrow\infty}\frac{\sup_{|x|\le r}X(t,x)}{f(r)}=\infty, \quad \text{$P$-a.s.}
\]
according as
the integral
$\int_1^{\infty}r^{d-1}\overline{\tau}(f(r))\,\d r$ is convergent or divergent,
where
$\overline{\tau}(r)=\tau((r,\infty))$
with $\tau$ being a measure on
$(0,\infty)$ defined by
\eqref{eq:def-tau}.
\end{thm}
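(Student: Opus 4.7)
The plan is to reduce the theorem to a Borel--Cantelli argument on unit annular shells, with Theorem \ref{cor:sup-tail} supplying the tail input and the extended regular variation of $\overline{\tau}$ from Lemma \ref{lem:tau-finite} controlling how it interacts with $f$. Partition $\R^d$ into shells $A_n=\{x\in\R^d : n\le |x|<n+1\}$, so $|\overline{A_n}|\asymp n^{d-1}$, and the constants $c_{A_n}$ of Theorem \ref{cor:sup-tail} satisfy $c_{A_n}\asymp n^{d-1}$. Extended regular variation of $\overline{\tau}$ gives $\overline{\tau}(\kappa f(n))\asymp\overline{\tau}(f(n))$ for any fixed $\kappa>0$, so by monotonicity of $f$ the convergence or divergence of $\int_1^\infty r^{d-1}\overline{\tau}(f(r))\,dr$ is equivalent to that of $\sum_n n^{d-1}\overline{\tau}(\kappa f(n))$ for any (hence every) $\kappa>0$.

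\textbf{Convergence direction.} Assuming $\int_1^\infty r^{d-1}\overline{\tau}(f(r))\,dr<\infty$, fix $\varepsilon>0$ and apply Theorem \ref{cor:sup-tail} to each $A_n$ with $r=\varepsilon f(n)$:
$$
\sum_{n\ge 1}P\!\left(\sup_{x\in A_n}X(t,x)>\varepsilon f(n)\right)\preceq\sum_n n^{d-1}\overline{\tau}(\varepsilon f(n))<\infty.
$$
Borel--Cantelli~I gives $\sup_{x\in A_n}X(t,x)\le\varepsilon f(n)$ eventually, a.s. Since $f$ is nondecreasing and every $x$ with $|x|\le r$ lies in some $A_n$ with $n\le r$, one obtains $\limsup_{r\to\infty}\sup_{|x|\le r}X(t,x)/f(r)\le\varepsilon$ almost surely, and sending $\varepsilon\downarrow 0$ through a countable sequence gives the limit $=0$.

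\textbf{Divergence direction.} Here the obstacle is that $\{\sup_{x\in A_n}X(t,x)\}_n$ is not an independent family, since the mild solution at any $x$ is assembled from Poisson atoms throughout $(0,t]\times\R^d\times(0,\infty)$. The remedy is to localize. For each $n$, enlarge $A_n$ slightly to $\tilde{A}_n$ so that the $\tilde{A}_n$ remain pairwise disjoint, and set $B_n=(t/2,t]\times\tilde{A}_n\times(0,\infty)$. Decompose $X(t,x)=Y_n(t,x)+Z_n(t,x)$, where $Y_n$ is built from the atoms of $\mu$ lying in $B_n$ only (together with the portion of the deterministic and compensation parts attached to $B_n$). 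Since the $B_n$ are disjoint, $\{Y_n(t,\cdot)\}_n$ is an independent family. The key step is to show that the dominant mass of $\overline{\eta_{A_n}}$ at infinity comes precisely from $B_n$; revisiting the proof of Lemma \ref{lem:eta-tail}, where the pieces labelled ${\rm (II)}_1,{\rm (II)}_2,{\rm (II)}_3$ are shown to be $o(\overline{\tau}(r))$, yields
$$
P\!\left(\sup_{x\in A_n}Y_n(t,x)>r\right)\succeq n^{d-1}\overline{\tau}(r), \quad r\to\infty,
$$
with a constant independent of $n$. For each positive integer $K$, divergence of $\sum_n n^{d-1}\overline{\tau}(Kf(n))$ together with Borel--Cantelli~II then produces $\sup_{x\in A_n}Y_n(t,x)>Kf(n)$ infinitely often a.s.; intersecting over positive integers $K$ yields $\limsup_n\sup_{x\in A_n}Y_n(t,x)/f(n)=\infty$. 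A uniform-in-$n$ bound on $\sup_{x\in A_n}|Z_n(t,x)|$, obtained by applying the moment estimates used in Proposition \ref{thm:sup-tail} to the Poisson random measure restricted to $B_n^c$, converts this into the corresponding statement for $X(t,x)$.

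\textbf{Main obstacle.} The decisive step is the localization estimate $P(\sup_{x\in A_n}Y_n(t,x)>r)\succeq |A_n|\,\overline{\tau}(r)$ matching the tail of $X$ on $A_n$. Because the heat kernel $g$ decays only polynomially (contrast Lemma \ref{lem:eta-express-2} with the Gaussian case of \cite{CK22}), restricting the noise to a spatial neighborhood of $A_n$ and to times close to $t$ may a priori discard a nonnegligible fraction of the tail mass of $\overline{\eta_{A_n}}$. One must verify, via the decomposition of $\eta_A$ employed in Lemma \ref{lem:eta-tail}, that this discarded mass is indeed $o(\overline{\tau}(r))$ uniformly in $n$. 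This uniformity, combined with the Potter bound already used in that proof, is what allows the independent Borel--Cantelli argument applied to $\{Y_n\}$ to reproduce (up to a constant factor) the non-independent tail supplied by Theorem \ref{cor:sup-tail}.
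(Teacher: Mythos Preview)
Your convergence argument is essentially the paper's, with one caveat: you apply Theorem~\ref{cor:sup-tail} directly to the growing annuli $A_n$, which requires the asymptotic $P(\sup_{x\in A_n}X(t,x)>r)\sim c_{A_n}\overline{\tau}(r)$ to hold uniformly in $n$ at $r=\varepsilon f(n)$. The paper sidesteps this by stationarity: it covers $B(n,n+1)$ by $O(n^{d-1})$ translates of $[0,1]^d$, so the sum reduces to $n^{d-1}$ copies of a single fixed probability $P(\sup_{[0,1]^d}X(t,x)>\varepsilon f(n))$, for which Theorem~\ref{cor:sup-tail} applies with no uniformity issue.

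For the divergence direction the paper takes a much simpler route than your localization scheme. Rather than matching the full tail of a localized process $Y_n$, it exploits positivity: when $d\ge\alpha$ the non-compensated representation \eqref{eq:decomp-1} writes $X(t,x)$ as $m_0t$ plus a sum of nonnegative terms $p_{t-\tau_i}(x-\eta_i)\zeta_i$, so a single Poisson atom $(\tau,\eta,\zeta)$ with $\eta\in B(n,n+1)$ and $\zeta/(t-\tau)^{d/\alpha}>Kf(n+1)$ already forces $\sup_{x\in B(n,n+1)}X(t,x)\ge g(0)Kf(n+1)+m_0t$. The events $\{\mu(T_n)\ge 1\}$ with $T_n=\{(s,y,z): y\in B(n,n+1),\,z/(t-s)^{d/\alpha}>Kf(n+1)\}$ are independent (disjoint $T_n$), and $P(\mu(T_n)\ge1)=1-e^{-|B(n,n+1)|\overline{\tau}(Kf(n+1))}$, so Borel--Cantelli~II applies directly. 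For $\alpha>d=1$ the paper uses the decomposition \eqref{eq:x-decomposition}, runs the single-atom argument on the nonnegative piece $X_3$, and controls $X_1$ (bounded) and $X_2$ (second moment plus Borel--Cantelli~I, with $f$ replaced by $f\vee\overline{\tau}^{-1}(1/r)$ to ensure enough growth).

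Your scheme has two real gaps. First, the claimed lower bound $P(\sup_{x\in A_n}Y_n(t,x)>r)\succeq n^{d-1}\overline{\tau}(r)$ \emph{uniformly in $n$} does not follow from Lemma~\ref{lem:eta-tail}: that lemma gives an asymptotic for fixed $A$ as $r\to\infty$, and passing from the L\'evy measure computation there to a probability statement for the localized $Y_n$ would require redoing the subexponential/Rosinski--Samorodnitsky argument for each $n$, with control on the rate. Second, your uniform bound on $\sup_{x\in A_n}|Z_n(t,x)|$ is not supported by the moment estimates of Proposition~\ref{thm:sup-tail}: those apply to the compensated martingale piece $X_2$, whereas $Z_n$ also contains the large-jump contribution from atoms outside $B_n$, which has no finite second moment in general. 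And if $f$ grows slowly (nothing rules out $f$ bounded in the divergence case), a moment bound of fixed size cannot make $Z_n/f(n)$ negligible. The paper's single-atom argument avoids all of this by never needing the complementary piece to be small---positivity makes it irrelevant.
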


\begin{proof} (1)
We first assume that
\[
\int_1^{\infty}r^{d-1}\overline{\tau}(f(r))\,\d r<\infty.
\]
Let $B(n,n+1)=\{y\in \R^d : n\le |y|<n+1\}$.
Then for any $n\ge 1$,
there exist $m_n\ge 2$ and a positive sequence $\{l_k^{(n)}\}_{0\le k\le m_n}$ such that
$m_n=O(n^{d-1})$, $l_k^{(n)}-l_{k-1}^{(n)}=1$ with $1\le k\le m_n$, and
\[
B(n,n+1)\subset \bigcup_{k=1}^{m_n}[l_{k-1}^{(n)},l_k^{(n)}]^d.
\]
Since $X(t,x)$ is stationary in $x\in \R^d$, we have
for any $K>0$,
\[
P\left(\sup_{x\in [l_{k-1}^{(n)},l_k^{(n)}]^d}X(t,x)>\frac{f(n)}{K}\right)
=P\left(\sup_{x\in [0,1]^d}X(t,x)>\frac{f(n)}{K}\right).
\]
Recall that by Lemma \ref{lem:tau-finite},
$\overline{\tau}$ is of extended regular variation at infinity.
Then by
Theorem \ref{cor:sup-tail},
\begin{equation}\label{eq:sup-tail}
\begin{split}
&P\left(\sup_{x\in B(n,n+1)}X(t,x)>\frac{f(n)}{K}\right)
\le \sum_{k=1}^{m_n}P\left(\sup_{x\in [l_{k-1}^{(n)},l_k^{(n)}]^d}X(t,x)>\frac{f(n)}{K}\right)\\
&=m_n P\left(\sup_{x\in [0,1]^d}X(t,x)>\frac{f(n)}{K}\right)
\asymp n^{d-1}
P\left(\sup_{x\in [0,1]^d}X(t,x)>\frac{f(n)}{K}\right)
\asymp
n^{d-1}
\overline{\tau}(f(n)).
\end{split}
\end{equation}
As $f$ is nondecreasing and $\overline{\tau}$ is decreasing,
\eqref{eq:sup-tail} implies that
$$
\sum_{n=1}^{\infty}P\left(\sup_{x\in B(n,n+1)}X(t,x)>\frac{f(n)}{K}\right)
\preceq
\sum_{n=1}^{\infty}n^{d-1}\overline{\tau}(f(n))
\preceq
\int_1^{\infty}r^{d-1}\overline{\tau}(f(r))\,\d r<\infty.
$$
Hence by the Borel-Cantelli lemma, we get for each $K\ge 1$,
$$
P\left(\text{there exists $N_0\ge 1$ such that for all $n\ge N_0$,
$\sup_{x\in B(n,n+1)}X(t,x)\le\frac{f(n)}{K}$}\right)=1.
$$
Then for each $K\ge 1$, $P$-a.s., we have for any $r\ge N_0$,
\[
\sup_{|x|\le r}X(t,x)\le\sup_{|x|\le [r]+1}X(t,x)
=\max_{1\le k\le [r]+1}\left(\sup_{x\in B(k-1,k)}X(t,x)\right),
\]
and thus
\begin{align*}
\frac{\sup_{|x|\le r}X(t,x)}{f(r)}
&\le\frac{\sup_{|x|\le N_0}X(t,x)}{f(r)}
\vee \max_{N_0+1\le k\le [r]+1}\left(\frac{\sup_{x\in B(k-1,k)}X(t,x)}{f(k-1)}\right)\\
&\le \frac{\sup_{|x|\le N_0}X(t,x)}{f(r)}\vee \frac{1}{K}.
\end{align*}
Letting $r\rightarrow\infty$, we get
\[
\limsup_{r\rightarrow\infty}\frac{\sup_{|x|\le r}X(t,x)}{f(r)}
\le\frac{1}{K}, \quad \text{$P$-a.s.\ for each $K\ge 1$.}
\]
Moreover,
by letting $K\rightarrow\infty$ along $\Q$,
we have
\begin{equation}\label{eq:sup-lim}
\lim_{r\rightarrow\infty}\frac{\sup_{|x|\le r}X(t,x)}{f(r)}=0, \quad \text{$P$-a.s.}
\end{equation}

(2) We next assume that
\[
\int_1^{\infty}r^{d-1}\overline{\tau}(f(r))\,\d r=\infty.
\]
For  the moment, we suppose that  $d\ge \alpha$.
Since \eqref{eq:first-moment} holds by assumption, $X(t,x)$ is expressed as \eqref{eq:decomp-1}.
For $n\in {\mathbb N}$
and $K>0$,
define $T_n=T_{B(n,n+1)}(Kf(n+1))$, that is,
\[
T_n=\left\{(s,y,z)\in (0,t]\times B(n,n+1)
\times (0,\infty) :
z/(t-s)^{d/\alpha}>Kf(n+1)
\right\}.
\]
For $n\in {\mathbb N}$,
let
$
A_n:=\left\{\mu(T_n)\ge 1\right\}
$.
Since $\{T_n\}_{n\ge1}$ are disjoint,
$\{A_n\}_{n\ge1}$ are independent and
so
\[
P(A_n)=1-e^{-\nu(T_n)}=1-\exp\left(-\overline{\tau}(Kf(n+1))|B(n,n+1)|
\right).
\]
In particular, if
$\limsup_{n\rightarrow\infty}\overline{\tau}(f(n+1))|B(n,n+1)|>0$,
then we have $\sum_{n=1}^{\infty}P(A_n)=\infty$ so that
$P(A_n \ \text{i.o.})=1$
by the second Borel-Cantelli lemma.

On the other hand, if
$\lim_{n\rightarrow\infty}\overline{\tau}(f(n+1))|B(n,n+1)|=0$
(which implies that $f(r)\to\infty $ as $r\to \infty$),
then
$\lim_{n\rightarrow\infty}\overline{\tau}(Kf(n+1))|B(n,n+1)|=0$,
because $\overline{\tau}$ is of extended regular variation at infinity by
Lemma \ref{lem:eta-express}.
Therefore,
\[
P(A_n)\sim \overline{\tau}(Kf(n+1))|B(n,n+1)|
\sim d \omega_d n^{d-1}\overline{\tau}(Kf(n+1)), \quad n\rightarrow\infty.
\]
Hence there exists
$c_1>0$
such that for any $n\ge 1$,
\[
P(A_n)\ge c_1 n^{d-1}\overline{\tau}(f(n+1)),
\]
which yields
\[
\sum_{n=1}^{\infty}P(A_n)
\ge c_1\sum_{n=1}^{\infty}n^{d-1}\overline{\tau}(f(n+1))
 \succeq
\int_1^{\infty}r^{d-1}\overline{\tau}(f(r))\,\d r=\infty.
\]
We thus have $P(A_n \ \text{i.o.})=1$ by the second Borel-Cantelli lemma again.

Let $P(A_n \ \text{i.o.})=1$ hold.
Then $P$-a.s.,
there exists
a random increasing sequence $\{n_l\}_{l\ge1}$
such that for any $l\in {\mathbb N}$, there exists
$(\tau,\zeta,\xi)\in T_{n_l}$ such that
\begin{equation*}
\begin{split}
\sup_{x\in B(n_l,n_l+1)}X(t,x)
\ge \sup_{x\in B(n_l,n_l+1)}p_{t-\tau}(x-\zeta)\xi
=
p_{t-\tau}(0)\xi
=\frac{g(0)\xi}{(t-\tau)^{d/\alpha}}\ge K g(0)f(n_l+1).
\end{split}
\end{equation*}
Hence for any $l\in {\mathbb N}$,
\begin{equation*}
\begin{split}
\frac{\sup_{|x|\le n_l}X(t,x)}{f(n_l)}
\ge \frac{\sup_{x\in B(n_l-1,n_l)}X(t,x)}{f(n_l+1)}
\ge Kg(0), \quad \text{$P$-a.s.,}
\end{split}
\end{equation*}
which yields
\[
\limsup_{r\rightarrow\infty}\frac{\sup_{|x|\le r}X(t,x)}{f(r)}\ge Kg(0), \quad \text{$P$-a.s.}
\]
Letting $K\rightarrow\infty$ along ${\mathbb Q}$, we have
\begin{equation}\label{eq:x-limsup}
\limsup_{r\rightarrow\infty}\frac{\sup_{|x|\le r}X(t,x)}{f(r)}=\infty, \quad \text{$P$-a.s.}
\end{equation}

We now suppose that $\alpha>d=1$.
Let $X_1(t,x)$, $X_2(t,x)$ and $X_3(t,x)$ be as in \eqref{eq:x-decomposition}.
For any $n\ge 1$,
\begin{align*}
&\sup_{x\in B(n,n+1)}X_3(t,x)
=\sup_{{x\in B(n,n+1)}}\sum_{i\ge 1:\, \tau_i\le t}p_{t-\tau_i}(x-\eta_i)
\zeta_i {\bf 1}_{\{\zeta_i>p_{t-\tau_i}(0)^{-1}\}}\\
&\ge \sup_{x\in B(n,n+1)}\sum_{i\ge 1:\, \tau_i\le t}p_{t-\tau_i}(x-\eta_i)
\zeta_i {\bf 1}_{\{\zeta_i>p_{t-\tau_i}(0)^{-1}, \eta_i \in \overline{B(n,n+1)}\}}\\
&\ge g(0)\sup\left\{\zeta_i/(t-\tau_i)^{d/\alpha}:
i\ge 1, \, \zeta_i/(t-\tau_i)^{d/\alpha}>1/g(0), \, \eta_i\in \overline{B(n,n+1)}\right\}
=:g(0)Y_n(t).
\end{align*}
Then, for any
$r\ge \max\{1/g(0),1\}$,
\[
P\left(Y_n(t)>r\right)
=1-e^{-|B(n,n+1)|\overline{\tau}(r)}
=1-e^{-2\overline{\tau}(r)}.
\]
See also \eqref{eq:xa-poisson} below for the details.

On the other hand, note again that now we consider $\alpha>d=1$
and assume that $\int_1^\infty \bar\tau (f(r))\,\d r=\infty$.
Then, according to \cite[Lemma 3.4]{CK20},
$$\int_1^\infty \bar\tau (f(r)\vee \bar\tau^{-1}(1/r))\,\d r=\infty,$$
where
$\bar\tau^{-1}(r)=\inf\{s>0 : \bar\tau(s)<r\}$ is the right continuous inverse of $\bar\tau$
and $a\vee b=\max\{a,b\}$.
Thus, for every $K>0$, we have for all large $n\ge 1$,
\begin{align*}
P\left(Y_n(t)>K(f(n)\vee\bar\tau^{-1}(1/n))\right)
&=1-e^{-2\overline{\tau}(K(f(n)\vee\bar\tau^{-1}(1/n)))}\\
&
\asymp\overline{\tau}(K(f(n)\vee\bar\tau^{-1}(1/n)))\\
&\asymp \overline{\tau}(f(n)\vee\bar\tau^{-1}(1/n)).
\end{align*}
Since $\{Y_n(t)\}_{n\ge0}$ are independent and
$\sum_{n=1}^{\infty}
\overline{\tau}(f(n)\vee\bar\tau^{-1}(1/n))=\infty$ by assumption,
we get by the second Borel-Cantelli lemma,
\[
P\left(\sup_{x\in B(n,n+1)}X_3(t,x)> K(f(n)\vee \bar\tau^{-1}(1/n)), \, \text{i.o.}\right)=1.
\]
Following the argument for \eqref{eq:x-limsup}, we obtain
\begin{equation}\label{eq:x_3-limsup}
\limsup_{r\rightarrow\infty}\frac{\sup_{|x|\le r}X_3(t,x)}{f(r)\vee \bar\tau^{-1}(1/r)}=\infty, \quad \text{$P$-a.s.}
\end{equation}

On the other hand, by \eqref{eq:sup-tail} applied to $X_2(t,x)$,
and by
\eqref{eq:x_2-bound} with $A=[0,1]$,
there exists $c_1>0$
such that for any $r>0$,
\[
P\left(\sup_{x\in B(n,n+1)}|X_2(t,x)|>r \right)
\le c_1P\left(\sup_{x\in [0,1]}|X_2(t,x)|>r \right)\\
\le \frac{c_1}{r^2}E\left[\sup_{x\in [0,1]}|X_2(t,x)|^2\right].
\]
Therefore,
\[
P\left(\sup_{x\in B(n,n+1)}|X_2(t,x)|>f(n)\vee\bar\tau^{-1}(1/n)\right)=o\left(1/(f(n)\vee\bar\tau^{-1}(1/n))^2\right).
\]
Note here that,  as  $\bar\tau(\bar\tau^{-1}(r))=r$ for any $r>0$ and $\lim_{s\rightarrow\infty}\bar\tau^{-1}(1/s)=\infty$,
we have by Lemma \ref{lem:tau-finite},
\[
\liminf_{s\rightarrow\infty}\bar\tau^{-1}(1/s)^{\alpha}/s
=\liminf_{s\rightarrow\infty}\bar\tau^{-1}(1/s)^{\alpha}\bar\tau(\bar\tau^{-1}(1/s))>0.
\]
Moreover, since $1<\alpha< 2$,
for every $K>0$, we have by assumption,
\begin{align*}
&
\sum_{n=1}^{\infty}P\left(\sup_{x\in B(n,n+1)}|X_2(t,x)|>K(f(n)\vee\bar\tau^{-1}(1/n))\right)
\preceq \sum_{n=1}^{\infty}
(f(n)\vee\bar\tau^{-1}(1/n))^{-2}\\
&\preceq \int_1^{\infty}
(f(r)\vee\bar\tau^{-1}(1/r))^{-2}\,\d r
\preceq \int_1^{\infty}
(\bar\tau^{-1}(1/r))^{-2}\,\d r\preceq \int_1^{\infty}r^{-2/\alpha}\,\d r<\infty.
\end{align*}
Hence by the Borel-Cantelli lemma
and the argument for \eqref{eq:sup-lim},
$$
\lim_{r\rightarrow\infty}
\frac{\sup_{|x|\le r}|X_2(t,x)|}{f(r)\vee\bar\tau^{-1}(1/r)}=0, \quad \text{$P$-a.s.}
$$
Combining this with \eqref{eq:x_1-bound} and \eqref{eq:x_3-limsup},
we finally obtain
\[
\limsup_{r\rightarrow\infty}\frac{\sup_{|x|\le r}X(t,x)}{f(r)\vee\bar\tau^{-1}(1/r)}=\infty,
\quad \text{$P$-a.s.},
\]
and so
the proof is complete.
\end{proof}

\subsection{Growth order in space of the local supremum on the lattice}

We next reveal the growth order of the local supremum on the lattice of $X(t,x) $
in order to study the attainability of the local supremum.
For $t>0$ and $x\in \R^d$, we define
\[
X_*(t,x)=
\begin{dcases}
\int_{(0,t]\times \R^d\times (0,\infty)}
p_{t-s}(x-y)z{\bf 1}_{\{|x-y|\le 1/2, \, p_{t-s}(x-y)z>1\}}\,\mu(\d s\, \d y\, \d z),
& d<\alpha,\\
\int_{(0,t]\times \R^d\times (0,\infty)}
p_{t-s}(x-y)z{\bf 1}_{\{|x-y|\le 1/2\}}\,\mu(\d s\, \d y\, \d z),
& d\ge \alpha.
\end{dcases}
\]
Let $\eta_0$ be the L\'evy measure on $(0,\infty)$ associated with $X_*(t,x)$.
Then, for any $B\in {\cal B}((0,\infty))$,
\begin{equation}\label{eq:eta-0}
\eta_0(B)=\nu\left(\left\{(s,y,z)\in (0,t]\times \R^d\times (0,\infty)
: |y|\le \frac{1}{2}, \, p_s(y)z\in B\cap \left({\bf 1}_{\{d<\alpha\}},\infty\right)\right\}\right).
\end{equation}
By the definitions
of $\eta$ and $\eta_0$, it is clear that for any $r>0$, $\overline{\eta_0}(r)\le \overline{\eta}(r)$.

We first present existence condition and asymptotic behavior of $\overline{\eta_0}(r)$,
and then relate the tail distribution of $X_*(t,x)$ with $\overline{\eta_0}(r)$.
\begin{lem}\label{lem:eta_0}
\begin{enumerate}
\item[{\rm (i)}] $\overline{\eta_0}(r)<\infty$ for any $r>0$
if and only if \eqref{eq:small-eta-conv} holds, i.e.,
$$\int_{(0,1]}z^{1+\alpha/d}\,\lambda(\d z)<\infty.$$  Under this condition, for any $r>0$,
\[
\overline{\eta_0}(r)
=\frac{\omega_d}{r^{1+\alpha/d}}
\int_0^{tr^{\alpha/d}}\left\{\int_0^{r^{1/d}/2}
\overline{\lambda}\left(\frac{s^{d/\alpha} }{g(l/s^{1/\alpha})}\right)
l^{d-1}\,\d l\right\}\,\d s.
\]
In particular, $r\mapsto \overline{\eta_0}(r)$ is continuous and decreasing on $(0,\infty)$ so that
$$\liminf_{r\rightarrow\infty}r^{1+\alpha/d}\overline{\eta_0}(r)>0.$$
Moreover,
$\overline{\eta_0}(r)$ is of extended regular variation at infinity.
\item[{\rm (ii)}] Under \eqref{eq:small-eta-conv}, for each $t>0$ and $x\in \R^d$,
\begin{equation}\label{eq:x_0-tail}
P(X_*(t,x)>r)\sim\overline{\eta_0}(r), \quad r\rightarrow\infty.
\end{equation}
\end{enumerate}
\end{lem}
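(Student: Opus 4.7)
The plan is to establish (i) first, following the pattern of Theorem \ref{thm:sol-exist} and Lemma \ref{lem:eta-express}, and then to derive (ii) by arguing as in Theorem \ref{thm:eta-tail}.

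For (i), I would first apply Fubini and the polar representation $p_s(y) = g(|y|/s^{1/\alpha})/s^{d/\alpha}$, together with spherical coordinates in $y$, to obtain
\[
\overline{\eta_0}(r) = \omega_d \int_0^t \int_0^{1/2} \overline{\lambda}\left(\frac{r s^{d/\alpha}}{g(l/s^{1/\alpha})}\right) l^{d-1}\,\d l\,\d s, \quad r \ge 1,
\]
noting that for $d<\alpha$ the additional cutoff $p_s(y)z>1$ is automatically enforced by $p_s(y)z>r$. The explicit identity stated in the lemma then follows after rescaling $\tilde s = r^{\alpha/d} s$ and $\tilde l = r^{1/d} l$. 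For the finiteness criterion, I would exchange back to integration in $\lambda(\d z)$, reducing $\overline{\eta_0}(r)$ to an integral of the Lebesgue measure of the cap $\{(s,y) \in (0,t]\times B(1/2):p_s(y)z>r\}$. For small $z$ this cap already lies inside $B(1/2)$, and its contribution matches the $A_1$-type (and partially $B_1$-type) calculation from the proof of Theorem \ref{thm:sol-exist}, which was shown to be finite if and only if \eqref{eq:small-eta-conv} holds; for large $z$ the cap is truncated to $B(1/2)$ and its contribution is bounded uniformly by $t|B(1/2)|\overline{\lambda}(c)$ for some $c>0$. Continuity and strict monotonicity of $r\mapsto\overline{\eta_0}(r)$ follow directly from the integral formula, and the lower bound $\liminf_r r^{1+\alpha/d}\overline{\eta_0}(r)>0$ comes from restricting to a compactly supported nontrivial subregion.

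To establish extended regular variation at infinity, I would compute the logarithmic derivative of $\overline{\eta_0}$ from the explicit formula as in Lemma \ref{lem:eta-express}(4); since, after rescaling, the integrand is $r$-free, the derivative decomposes into $-(1+\alpha/d)/r$ together with boundary contributions from the $r$-dependent upper limits $tr^{\alpha/d}$ and $r^{1/d}/2$, which can be controlled by comparison with the full integral via the monotonicity of the integrand in $\tilde s$ and $\tilde l$. Once $r\overline{\eta_0}'(r)/\overline{\eta_0}(r)$ is shown to be bounded on $[1,\infty)$, \cite[p.~74, Theorem 2.2.6]{BGT89} delivers the conclusion.

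For (ii), I would first invoke \cite[p.~43, Theorem 2.7(i)]{K14} to verify that under \eqref{eq:small-eta-conv} the Poisson integral defining $X_*(t,x)$ converges $P$-a.s.\ and has L\'evy measure $\eta_0$: after intersecting with $\{|y|\le 1/2\}$ (and $p_s(y)z>1$ when $d<\alpha$), the integrability of $1\wedge(p_s(y)z)$ against $\nu$ reduces to the $A_1\cup B_1$ pieces handled in the proof of Theorem \ref{thm:sol-exist}, each equivalent to \eqref{eq:small-eta-conv}. Since $\overline{\eta_0}$ is of extended regular variation by (i), \cite[p.~66, Theorem 2.0.7]{BGT89} yields $\overline{\eta_0}(r+s)/\overline{\eta_0}(r)\to 1$ and $\limsup_r\overline{\eta_0}(r)/\overline{\eta_0}(2r)<1$, so that the normalization $\overline{\eta_0}/\overline{\eta_0}(1)$ lies in the subexponential class ${\cal S}_0$ of \cite[Section 3]{P07}; Pakes' theorem \cite[Theorem 3.3]{P07} applied with $\gamma=0$ then yields \eqref{eq:x_0-tail}. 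The main obstacle I anticipate is the bookkeeping required to show that the spatial cutoff $|y|\le 1/2$ does not alter the equivalence of $\overline{\eta_0}(r)<\infty$ with \eqref{eq:small-eta-conv}, and to show that the boundary contributions from the $r$-dependent integration limits in the log-derivative remain comparable to the full integral; both require a slightly more delicate monotonicity analysis than in Lemma \ref{lem:eta-express}.
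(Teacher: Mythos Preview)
Your proposal is correct and follows essentially the same route as the paper: the integral formula via polar coordinates and rescaling, the finiteness criterion by splitting into small and large $z$ contributions, extended regular variation via bounding $r\,\overline{\eta_0}'(r)/\overline{\eta_0}(r)$ using monotonicity of $\Lambda(s,l):=\overline{\lambda}(s^{d/\alpha}/g(l/s^{1/\alpha}))$ in each variable (the paper makes the change of variables $l=s^{1/\alpha}u$ to get monotonicity in $s$, which is precisely the ``slightly more delicate'' step you flagged), and part~(ii) by the subexponentiality argument of Theorem~\ref{thm:eta-tail}. The one addition in the paper is that its small/large-$z$ decomposition is packaged as the explicit two-sided bound $\overline{\eta_0}(r)\asymp r^{-(1+\alpha/d)}\int_{(0,\kappa r]}z^{1+\alpha/d}\,\lambda(\d z)+\overline{\lambda}(\kappa r)$, which is recorded because it is reused later in Lemma~\ref{lem:eta-eta_0}.
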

\begin{proof}

(1) For $\kappa>0$, define
\[
\Lambda_{\kappa}(r)
=\frac{1}{r^{1+\alpha/d}}\int_{(0,\kappa r]}z^{1+\alpha/d}\,\lambda(\d z)
+\overline{\lambda}(\kappa r),\quad r>0.
\]
We first claim that, under \eqref{eq:small-eta-conv},
for any positive constants $\kappa_1$ and $\kappa_2$,
$\Lambda_{\kappa_1}(r)\asymp \Lambda_{\kappa_2}(r)$ as $r\rightarrow\infty$.
Indeed, let $\kappa_1$ and $\kappa_2$ be positive constants such that $\kappa_1<\kappa_2$.
Since
\[
\overline{\lambda}(\kappa_1 r)
=\int_{(\kappa_1 r,\kappa_2 r]}\lambda(\d z)+\overline{\lambda}(\kappa_2 r)
\le \frac{1}{(\kappa_1 r)^{1+\alpha/d}}\int_{(0,\kappa_2 r]}z^{1+\alpha/d}\,\lambda(\d z)
+\overline{\lambda}(\kappa_2 r),
\]
we have
\[
\Lambda_{\kappa_1}(r)
\le \frac{1}{r^{1+\alpha/d}}\left(1+\frac{1}{\kappa_1^{1+\alpha/d}}\right)
\int_{(0,\kappa_2 r]}z^{1+\alpha/d}\,\lambda(\d z)+\overline{\lambda}(\kappa_2 r)
\le \left(1+\frac{1}{\kappa_1^{1+\alpha/d}}\right)\Lambda_{\kappa_2}(r).
\]
We also see that
$\overline{\lambda}(\kappa_2 r)\le \overline{\lambda}(\kappa_1 r)$ and
\begin{equation*}
\begin{split}
\frac{1}{r^{1+\alpha/d}}\int_{(0,\kappa_2 r]}z^{1+\alpha/d}\,\lambda(\d z)
&=\frac{1}{r^{1+\alpha/d}}\int_{(0,\kappa_1 r]}z^{1+\alpha/d}\,\lambda(\d z)
+\frac{1}{r^{1+\alpha/d}}\int_{(\kappa_1 r,\kappa_2 r]}z^{1+\alpha/d}\,\lambda(\d z)\\
&\le \frac{1}{r^{1+\alpha/d}}\int_{(0,\kappa_1 r]}z^{1+\alpha/d}\,\lambda(\d z)
+\kappa_2^{1+\alpha/d}\overline{\lambda}(\kappa_1 r),
\end{split}
\end{equation*}
which implies that
\[
\Lambda_{\kappa_2}(r)\le (1+\kappa_2^{1+\alpha/d})\Lambda_{\kappa_1}(r).
\]
Therefore, $\Lambda_{\kappa_1}(r)\asymp \Lambda_{\kappa_2}(r)$.

(2)
Let $M=g(0)$ and $c_0\ge \max_{0\le u\le M}g^{-1}(u)^du$. Then by definition,
\begin{equation}\label{eq:eta_0-1}
\begin{split}
\overline{\eta_0}(r)&=\eta_0((r,\infty))\\
&=\omega_d\int_0^t\left\{\int_0^{1/2}
\left(\int_{g(l/s^{1/\alpha})z/s^{d/\alpha}>r}\,\lambda(\d z)\right)
l^{d-1}\,\d l\right\}\,\d s\\
&=\omega_d\int_{(0,\infty)}
\left\{\int_0^t
\left(\int_0^{1/2}l^{d-1}{\bf 1}_{\{l<g^{-1}(s^{d/\alpha }r/z)s^{1/\alpha}, \, s^{d/\alpha}r/z\le M\}}\,\d l\right)
\,\d s\right\}\,\lambda(\d z)\\
&=\frac{\omega_d}{d}\int_{(0,\infty)}
\left[
\int_0^{t\wedge (Mz/r)^{\alpha/d}}
\left\{\frac{1}{2}\wedge \left(g^{-1}\left(\frac{s^{d/\alpha}r}{z}\right)s^{1/\alpha}\right)\right\}^d
\,\d s\right]\,\lambda(\d z)\\
&=\frac{\omega_d}{d}
\Biggl(\int_{(0,r/(2c_0)]}
\left[\int_0^{t\wedge (Mz/r)^{\alpha/d}}
\left\{\frac{1}{2}\wedge \left(g^{-1}\left(\frac{s^{d/\alpha}r}{z}\right)s^{1/\alpha}\right)\right\}^d\, \d s\right]
\,\lambda(\d z)\\
&\qquad\quad+\int_{(r/(2c_0),\infty)}
\left[\int_0^{t\wedge (Mz/r)^{\alpha/d}}
\left\{\frac{1}{2}\wedge \left(g^{-1}\left(\frac{s^{d/\alpha}r}{z}\right)s^{1/\alpha}\right)\right\}^d\, \d s\right]
\,\lambda(\d z)\Biggr)\\
&=\frac{\omega_d}{d}\left({\rm (I)}+{\rm (II)}\right).
\end{split}
\end{equation}

If $z\le r/(2c_0)$ and $s\le (Mz/r)^{\alpha/d}$, then
\[
g^{-1}\left(\frac{s^{d/\alpha}r}{z}\right)^ds^{d/\alpha}
=g^{-1}\left(\frac{s^{d/\alpha}r}{z}\right)^d\frac{s^{d/\alpha}r}{z}\cdot\frac{z}{r}
\le c_0\cdot\frac{1}{2c_0}
=\frac{1}{2},
\]
and so
\begin{align*}
{\rm (I)}
&=\int_{(0,r/(2c_0)]}
\left(\int_0^{t\wedge
(Mz/r)^{\alpha/d}}
g^{-1}\left(\frac{s^{d/\alpha}r}{z}\right)^ds^{d/\alpha}\, \d s\right)\,\lambda(\d z)\\
&=\int_{(0,r/(2c_0)]}
\left(\int_0^{t\wedge
(Mz/(2r))^{\alpha/d}}
g^{-1}\left(\frac{s^{d/\alpha}r}{z}\right)^ds^{d/\alpha}\, \d s\right)\,\lambda(\d z)\\
&\quad+\int_{(0,r/(2c_0)]}
\left(\int_{t\wedge
(Mz/(2r))^{\alpha/d}}^{t\wedge
(Mz/r)^{\alpha/d}}
g^{-1}\left(\frac{s^{d/\alpha}r}{z}\right)^ds^{d/\alpha}\, \d s\right)\,\lambda(\d z)\\
&={\rm (I)}_1+{\rm (I)}_2.
\end{align*}
Then by
applying \eqref{eq:g^{-1}-asymp}, we have
\begin{align*}
{\rm (I)}_1
&\asymp \int_{(0,r/(2c_0)]}
\left(\int_0^{(Mz/(2r))^{\alpha/d}}
\left(\frac{z}{s^{d/\alpha}r}\right)^{d/(d+\alpha)}s^{d/\alpha}\, \d s\right)\,\lambda(\d z)\\
&=\frac{d+\alpha}{2d+\alpha}\left(\frac{M}{2}\right)^{\alpha/d+\alpha/(d+\alpha)}
\frac{1}{r^{1+\alpha/d}}\int_{(0,r/(2c_0)]}z^{1+\alpha/d}\,\lambda(\d z)
\end{align*}
and
\begin{align*}
{\rm (I)}_2&\le M^d\int_{(0,r/(2c_0)]}
\left(\int_0^{(Mz/r
)^{\alpha/d}}
s^{d/\alpha}\, \d s\right)\,\lambda(\d z)\\
&=\frac{dM^d}{d+\alpha}\left(\frac{M}{2}\right)^{1+\alpha/d}
\frac{1}{r^{1+\alpha/d}}
\int_{(0,r/(2c_0)]}z^{1+\alpha/d}\,\lambda(\d z).
\end{align*}
Thus
\begin{equation}\label{eq:eta_0-1-asymp}
{\rm (I)}\asymp \frac{1}{r^{1+\alpha/d}}\int_{(0,r/(2c_0)]}z^{1+\alpha/d}\,\lambda(\d z).
\end{equation}

We next take $c_1>0$ so small that
$c_1\le (M/(2t^{d/\alpha}))\wedge (2c_0)$
and
\begin{align*}
{\rm (II)}
&=\int_{(r/(2c_0), r/c_1]}
\left[\int_0^{t\wedge (Mz/r)^{\alpha/d}}
\left\{\frac{1}{2}\wedge \left(g^{-1}\left(\frac{s^{d/\alpha}r}{z}\right)s^{1/\alpha}\right)\right\}^d\, \d s\right]
\,\lambda(\d z)\\
&\quad +\int_{(r/c_1,\infty)}
\left[\int_0^{t\wedge (Mz/r)^{\alpha/d}}
\left\{\frac{1}{2}\wedge \left(g^{-1}\left(\frac{s^{d/\alpha}r}{z}\right)s^{1/\alpha}\right)\right\}^d\, \d s\right]
\,\lambda(\d z)\\
&={\rm (II)}_1+{\rm (II)}_2.
\end{align*}
Then
\[
{\rm (II)}_1\preceq
\int_{(r/(2c_0), \infty)
}\,\lambda(\d z)
=
\overline{\lambda}\left(\frac{r}{2c_0}\right).
\]
If we take $c_2>0$
so large that $c_2>M/(c_1 t^{d/\alpha})$, then $t>(M/(c_1c_2))^{\alpha/d}$ and thus
\begin{align*}
{\rm (II)}_1
&\ge
\int_{(r/(2c_0), r/c_1]}
\left[\int_{(Mz/(2c_2r))^{\alpha/d}}^{(Mz/(c_2r))^{\alpha/d}}
\left\{\frac{1}{2}\wedge \left(g^{-1}\left(\frac{s^{d/\alpha}r}{z}\right)s^{1/\alpha}\right)\right\}^d\, \d s\right]
\,\lambda(\d z)\\
&\asymp\frac{1}{r^{\alpha/d}}\int_{(r/(2c_0), r/c_1]}z^{\alpha/d}
\,\lambda(\d z)
\asymp
\frac{1}{r^{1+\alpha/d}}\int_{(r/(2c_0), r/c_1]}z^{1+\alpha/d}\,\lambda(\d z).
\end{align*}
If $0\le s\le t$ and $z>r/c_1$, then by \eqref{eq:g^{-1}-asymp},
\[
g^{-1}\left(\frac{s^{d/\alpha}r}{z}\right)^ds^{d/\alpha}
\ge g^{-1}(c_1s^{d/\alpha})^ds^{d/\alpha}
\asymp \frac{1}{(s^{d/\alpha})^{d/(d+\alpha)}}\cdot s^{d/\alpha}
=s^{d/(d+\alpha)},
\]
which yields
\begin{align*}
{\rm (II)}_2
&=\int_{(r/c_1,\infty)}
\left[\int_0^t
\left\{\frac{1}{2}\wedge \left(g^{-1}\left(\frac{s^{d/\alpha}r}{z}\right)s^{1/\alpha}\right)\right\}^d\, \d s\right]
\,\lambda(\d z)
\asymp
\overline{\lambda}\left(\frac{r}{c_1}\right).
\end{align*}
By the argument above, we get
\[
\frac{1}{r^{1+\alpha/d}}\int_{(r/(2c_0), r/c_1]}z^{1+\alpha/d}\,\lambda(\d z)
+\overline{\lambda}\left(\frac{r}{c_1}\right)
\preceq {\rm (II)}
\preceq
\overline{\lambda}\left(\frac{r}{2c_0}\right).
\]
Combining this with \eqref{eq:eta_0-1} and \eqref{eq:eta_0-1-asymp},
we have
\begin{equation}\label{e:llffpp}
\Lambda_{1/c_1}(r)
\preceq \overline{\eta_0}(r)
\preceq \Lambda_{1/(2c_0)}(r)
.\end{equation}
Therefore,
by the assertion in (1), $\overline{\eta_0}(r)<\infty$ for any $r>0$
if and only if \eqref{eq:small-eta-conv} holds.
Moreover, under this condition, for each $\kappa>0$,
$\overline{\eta_0}(r)\asymp \Lambda_{\kappa}(r)$ as $r\rightarrow\infty$
and $\liminf_{r\rightarrow\infty}r^{1+\alpha/d}\overline{\eta_0}(r)>0$.

(3) By the definition of $p_s(y)$, we
obtain
\begin{align*}
\overline{\eta_0}(r)=\eta_0((r,\infty))
&=\int_0^t\left\{\int_{|y|\le 1/2}
\overline{\lambda}\left(\frac{r}{p_s(y)}\right)
\,\d y\right\}\,\d s\\
&=\omega_d\int_0^t\left\{\int_0^{1/2}
\overline{\lambda}\left(\frac{s^{d/\alpha} r}{g(l/s^{1/\alpha})}\right)
l^{d-1}\,\d l\right\}\,\d s\\
&=\frac{\omega_d}{r^{1+\alpha/d}}
\int_0^{tr^{\alpha/d}}\left\{\int_0^{r^{1/d}/2}
\overline{\lambda}\left(\frac{u^{d/\alpha} }{g(v/u^{1/\alpha})}\right)
v^{d-1}\,\d v\right\}\,\d u.
\end{align*}
At the last equation, we used the change of
variables
formula
with $u=sr^{\alpha/d}$ and $v=lr^{1/d}$.
Hence $r\mapsto\overline{\eta_0}(r)$ is continuous on $(0,\infty)$.

(4) For $s>0$ and $l>0$, we define
\[
\Lambda(s,l)=\overline{\lambda}\left(\frac{s^{d/\alpha} }{g(l/s^{1/\alpha})}\right).
\]
We also define
\[
\Theta(r)=\int_0^{tr^{\alpha/d}}
\left(\int_0^{r^{1/d}/2}\Lambda(s,l)l^{d-1}\,\d l\right)\,\d s,
\]
and so
$\overline{\eta_0}(r)=\omega_d\Theta(r)/r^{1+\alpha/d}$.
Then
\begin{equation}\label{eq:eta-deri-1}
\frac{\overline{\eta_0}'(r)}{\overline{\eta_0}(r)}=\frac{\Theta'(r)}{\Theta(r)}
-\left(1+\frac{\alpha}{d}\right)\frac{1}{r}
\end{equation}
and
\begin{equation}\label{eq:theta-deri}
\Theta'(r)
=\frac{\alpha}{d}tr^{\alpha/d-1}\int_0^{r^{1/d}/2}\Lambda(tr^{\alpha/d},l)l^{d-1}\,\d l
+\frac{1}{2^d d}\int_0^{tr^{\alpha/d}}\Lambda\left(s,\frac{r^{1/d}}{2}\right)\,\d s.
\end{equation}

Since $\Lambda(s,l)$ is decreasing in $l$, we have
\[
\Theta(r)
\ge  \int_0^{tr^{\alpha/d}}\left(\int_0^{r^{1/d}/2}l^{d-1}\,\d l\right)\Lambda\left(s, \frac{r^{1/d}}{2}\right)\,\d s
=\frac{r}{2^d d}\int_0^{tr^{\alpha/d}}\Lambda\left(s, \frac{r^{1/d}}{2}\right)\,\d s.
\]
As
\[
\Lambda(s,s^{1/\alpha}u)=
\overline{\lambda}\left(\frac{s^{d/\alpha}}{g(u)}\right)
\]
is decreasing in $s$,
we also get by the change of variables formula with $l=s^{1/\alpha}u$ and
$v=t^{1/\alpha}r^{1/d}u$,
\begin{equation*}
\begin{split}
\Theta(r)
&=\int_0^{tr^{\alpha/d}}
\left(\int_0^{r^{1/d}/(2s^{1/\alpha})}\Lambda(s,s^{1/\alpha}u)u^{d-1}\,\d u\right)s^{d/\alpha}\,\d s\\
&\ge \int_0^{tr^{\alpha/d}}
\left(\int_0^{r^{1/d}/(2s^{1/\alpha})}\Lambda(tr^{\alpha/d},t^{1/\alpha} r^{1/d}u)u^{d-1}\,\d u\right)
s^{d/\alpha}\,\d s\\
&\ge \int_0^{tr^{\alpha/d}}s^{d/\alpha}\,\d s
\int_0^{1/(2t^{1/\alpha})}\Lambda(tr^{\alpha/d},t^{1/\alpha} r^{1/d}u)u^{d-1}\,\d u\\
&=\frac{tr^{\alpha/d}}{1+d/\alpha}
\int_0^{r^{1/d}/2}\Lambda(tr^{\alpha/d},v)v^{d-1}\,\d v.
\end{split}
\end{equation*}
Thus
\[
\Theta(r)\ge
\left(\frac{r}{2^d d}\int_0^{tr^{\alpha/d}}\Lambda\left(s, \frac{r^{1/d}}{2}\right)\,\d s\right)
\vee \left(\frac{tr^{\alpha/d}}{1+d/\alpha}
\int_0^{r^{1/d}/2}\Lambda(tr^{\alpha/d},v)v^{d-1}\,\d v\right).
\]
Then by \eqref{eq:theta-deri}, we obtain
\[
0\le \frac{\Theta'(r)}{\Theta(r)}\le
\frac{1}{r}\left(\frac{\alpha}{d}\left(1+\frac{d}{\alpha}\right)+1\right)
=\frac{1}{r}\left(2+\frac{\alpha}{d}\right).
\]
Hence by \eqref{eq:eta-deri-1},
\[
-\left(1+\frac{\alpha}{d}\right)\le r \frac{\overline{\eta_0}'
(r)}{\overline{\eta_0}(r)}
\le \left(2+\frac{\alpha}{d}\right)-\left(1+\frac{\alpha}{d}\right)=1.
\]
With these two inequalities at hand,
we can follow the proofs of Lemma \ref{lem:eta-express} and
Theorem \ref{thm:eta-tail} to prove
the last assertion in (i) as well as the assertion (ii).
\end{proof}

We finally determine the growth rate of the local supremum of $X(t,x)$
on the lattice.
\begin{thm}\label{thm:local-growth}
Suppose that \eqref{eq:small-conv-0} and \eqref{eq:big-conv} hold.
Let $f:(0,\infty)\to (0,\infty)$ be nondecreasing.
\begin{enumerate}
\item[{\rm (i)}]
Let $\bar\eta(r)=\eta((r,\infty))$ for all $r>0$, where $\eta$ is defined by \eqref{eq:eta}.
If
$\int_1^{\infty}r^{d-1}\overline{\eta}(f(r))\,\d r<\infty$,
then
\[
\lim_{r\rightarrow\infty}\frac{\sup_{x\in {\mathbb Z}^d, \, |x|\le r}X(t,x)}{f(r)}=0, \quad \text{$P$-a.s.}
\]

\item[{\rm (ii)}]
Assume further either of the next conditions{\rm :}
\begin{enumerate}
\item[{\rm (a)}] For $\alpha\le d$,
$\int_{(0,1]}z\,\lambda(\d z)<\infty${\rm ;}
\item[{\rm (b)}] For $\alpha>d=1$,
there exists $\gamma>1/(1+\alpha)$
such that
$\int_{(1,\infty)}z^{\gamma}\,\lambda(\d z)<\infty$.
\end{enumerate}
Let $\bar\eta_0(r)=\eta_0((r,\infty))$ for all $r>0$,
where $\eta_0$ is defined by \eqref{eq:eta-0}. If $\int_1^{\infty}r^{d-1}\overline{\eta}_0(f(r))\,\d r=\infty$, then
\[
\limsup_{r\rightarrow\infty}\frac{\sup_{x\in {\mathbb Z}^d, \, |x|\le r}X(t,x)}{f(r)}=\infty, \quad \text{$P$-a.s.}
\]
\end{enumerate}
\end{thm}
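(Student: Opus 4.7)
The plan is to prove both parts via Borel--Cantelli arguments, applied to $X$ itself for part (i) and to the single-atom contribution $X_*(t,x)$ for part (ii).

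For part (i), by stationarity and Theorem \ref{thm:eta-tail}, $P(X(t,x)>r)\sim\overline{\eta}(r)$ as $r\to\infty$ with the asymptotic independent of $x$. Since $|\mathbb{Z}^d\cap B(n,n+1)|=O(n^{d-1})$ where $B(n,n+1):=\{y\in\R^d:n\le|y|<n+1\}$, the union bound combined with the extended regular variation of $\overline{\eta}$ from Lemma \ref{lem:eta-express}(ii) gives, for each $K\ge 1$,
$$
P\Big(\max_{x\in\mathbb{Z}^d\cap B(n,n+1)}X(t,x)>f(n)/K\Big)\preceq K^{\delta_2}\,n^{d-1}\,\overline{\eta}(f(n)).
$$
Since $f$ is nondecreasing and $\overline{\eta}$ decreasing, $\sum_n n^{d-1}\overline{\eta}(f(n))$ is comparable to $\int_1^\infty r^{d-1}\overline{\eta}(f(r))\,\d r$, finite by hypothesis. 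Borel--Cantelli then produces only finitely many exceptional $n$, and letting $K\to\infty$ along $\mathbb{Q}$, as at the end of the proof of Theorem \ref{thm:limsup-whole}(1), promotes the bound $\limsup\le 1/K$ to the claimed limit zero.

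For part (ii), the crucial observation is that, for distinct $x,y\in\mathbb{Z}^d$, $|x-y|\ge 1$ implies $B(x,1/2)\cap B(y,1/2)=\emptyset$. Hence $\{X_*(t,x):x\in\mathbb{Z}^d\}$ is a family of mutually independent random variables, each a measurable functional of $\mu$ on the pairwise disjoint set $B(x,1/2)\times(0,t]\times(0,\infty)$. By Lemma \ref{lem:eta_0}(ii), $P(X_*(t,x)>r)\sim\overline{\eta_0}(r)$; combining independence, the extended regular variation of $\overline{\eta_0}$ from Lemma \ref{lem:eta_0}(i), the divergence hypothesis, and the second Borel--Cantelli lemma (events indexed by different $n$ are independent too, since they involve disjoint regions of $\mu$), one obtains, for every $K>0$,
$$
\max_{x\in\mathbb{Z}^d\cap B(n,n+1)}X_*(t,x)>Kf(n)\quad\text{infinitely often, $P$-a.s.}
$$

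The remaining step is to transfer this lower bound from $X_*$ to $X$. Under condition (a) ($\alpha\le d$), the non-compensated form \eqref{eq:decomp-1} applies, and writing $X(t,x)=m_0t+X_*(t,x)+R(t,x)$ with $R(t,x)\ge 0$ (the integral of a nonnegative integrand against $\mu$) yields $X(t,x)\ge X_*(t,x)+m_0t$ deterministically; dividing by $f(n)\to\infty$ closes the case. Under condition (b) ($\alpha>d=1$), I would use the decomposition $X=X_1+X_2+X_3$ from \eqref{eq:x-decomposition}: every atom $(\tau,\xi,\zeta)$ contributing to $X_*(t,x)$ satisfies $|x-\xi|\le 1/2$ and $p_{t-\tau}(x-\xi)\zeta>1$, hence $p_{t-\tau}(0)\zeta\ge p_{t-\tau}(x-\xi)\zeta>1$, so the same atom contributes positively to $X_3(t,x)$ and $X_3(t,x)\ge X_*(t,x)$; together with $|X_1(t,x)|\le M_0$ from \eqref{eq:x_1-bound} this gives $X(t,x)\ge X_*(t,x)-M_0+X_2(t,x)$. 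The main obstacle is the uniform control of the martingale term $X_2(t,x)$ over lattice points. Following the strategy in the proof of Theorem \ref{thm:limsup-whole}, I would replace $f$ by $F(r):=f(r)\vee\overline{\eta_0}^{-1}(1/r)$ (with $\overline{\eta_0}^{-1}$ the right-continuous inverse); an analogue of \cite[Lemma 3.4]{CK20} preserves the divergence integral upon this replacement, and the boost $F(r)\ge\overline{\eta_0}^{-1}(1/r)$, together with the polynomial decay of $\overline{\eta_0}$ implied by condition (b), makes the Chebyshev bound $P(|X_2(t,x)|>KF(n))\preceq 1/F(n)^2$ summable ($|\mathbb{Z}^d\cap B(n,n+1)|$ is uniformly bounded since $d=1$). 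Since $F\ge f$, the conclusion $\limsup_{r\to\infty}\sup_{x\in\mathbb{Z}^d,|x|\le r}X(t,x)/f(r)=\infty$ $P$-a.s.\ follows.
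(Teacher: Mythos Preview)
Your arguments for part (i) and for part (ii) under condition (a) are essentially the same as the paper's and are correct. The gap is in part (ii)(b).

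The second-moment Chebyshev bound $P(|X_2(t,x)|>KF(n))\preceq F(n)^{-2}$ is not summable. From Lemma~\ref{lem:eta_0}(i) one only has the universal lower bound $\liminf_{r\to\infty}r^{1+\alpha}\overline{\eta_0}(r)>0$ (here $d=1$), which yields $\overline{\eta_0}^{-1}(1/n)\succeq n^{1/(1+\alpha)}$ and hence $\sum_n F(n)^{-2}\preceq\sum_n n^{-2/(1+\alpha)}$; since $\alpha>1$ this series \emph{diverges}. Condition (b) does not help here: it gives an \emph{upper} bound on $\overline{\lambda}$ and thus on $\overline{\eta_0}$, which translates into an \emph{upper} bound on $\overline{\eta_0}^{-1}$, not the lower bound you would need to make $F(n)^{-2}$ summable. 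So the claim that ``the polynomial decay of $\overline{\eta_0}$ implied by condition (b) makes the Chebyshev bound summable'' is backwards.

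The remedy is to use a fourth-moment bound. The paper does this with a slightly different decomposition $X=m_1+X_1''+X_2''$, where the indicator is $\{p_{t-s}(x-y)z\le 1\}$ rather than $\{z\le p_{t-s}(0)^{-1}\}$; then $X_2''\ge X_*$ trivially, and $E[|X_1''(t,0)|^4]<\infty$ is obtained via $(p_s(y)z)^4\le (p_s(y)z)^\gamma$ on $\{p_s(y)z\le 1\}$ together with condition (b) for the large-$z$ contribution. Since $\sum_n F(n)^{-4}\preceq\sum_n n^{-4/(1+\alpha)}<\infty$ (as $\alpha<2<3$), Borel--Cantelli then controls the martingale term. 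In fact your decomposition would also work with fourth moments: on $\{z\le p_{t-s}(0)^{-1}\}$ one has $p_{t-s}(x-y)z\le 1$, so $(p_s(y)z)^4\le(p_s(y)z)^2$ and $E[X_2(t,0)^4]$ is finite by the same estimates that gave \eqref{eq:x_2-bound}; but the point is that a moment of order at least $p>1+\alpha$ is needed, and $p=2$ fails.
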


\begin{proof} (1)
We first prove (i).
Following \eqref{eq:sup-tail},
we see
by Theorem \ref{thm:eta-tail}
that for any $K>0$, there exist positive constants $c_1$, $c_2$, $c_3$ such that
\[
P\left(\sup_{z\in {\mathbb Z}^d\cap B(n,n+1)}X(t,x)>\frac{f(n)}{K}\right)
\le c_1n^{d-1}P\left(X(t,0)>\frac{f(n)}{K}\right)
\le c_2n^{d-1}\overline{\eta}(f(n))
\]
and so
\[
\sum_{n=1}^{\infty}P\left(\sup_{z\in {\mathbb Z}^d\cap B(n,n+1)}X(t,x)>\frac{f(n)}{K}\right)
\le c_2\sum_{n=1}^{\infty}n^{d-1}\overline{\eta}(f(n))\le
c_3\int_1^{\infty}r^{d-1}\overline{\eta}(f(r))\,\d r.
\]
Hence by the same argument for \eqref{eq:sup-lim}, the proof of (i) is complete.

(2) We next prove (ii) under the condition (a).
We set as in \eqref{eq:decomp-1},
\[
X(t,x)=m_0t
+\int_{(0,t]\times \R^d\times (0,\infty)}p_{t-s}(x-y)z\,\mu({\rm d}s\,{\rm d}y\,{\rm d}z)
:=m_0 t
+X''_2(t,x).
\]

By definition,
\begin{equation}\label{eq:x_2-lower}
X''_2(t,x)\ge X_*(t,x), \quad \text{$P$-a.s.}
\end{equation}
Since $\overline{\eta_0}$ is of extended regular variation at infinity by Lemma \ref{lem:eta_0}
(i)
and $\{X_*(t,x)\}_{x\in {\mathbb Z}^d}$ are identically distributed,
we have by \eqref{eq:x_0-tail},
\begin{equation*}
\begin{split}
\sum_{n=1}^{\infty}\sum_{x
\in {\mathbb Z}^d\cap B(n,n+1)}P(X_*(t,x)>Kf(n+1))
&\ge c_1\sum_{n=1}^{\infty} n^{d-1}\overline{\eta_0}(
f(n+1))\\
&\ge c_2\int_1^{\infty}r^{d-1}\overline{\eta_0}(f(r))\,\d r=\infty.
\end{split}
\end{equation*}
Moreover,
as $\{X_*(t,x)\}_{x\in {\mathbb Z}^d}$ are independent, we get by the second Borel-Cantelli lemma,
\[
P\left(\begin{varwidth}[c]{20cm}
\text{there exists a sequence $\{(n_l,x_l)\}_{l\ge 1}\subset {\mathbb N}\times {\mathbb Z}^d$}
\text{such that
$n_l\rightarrow\infty$} as $l\rightarrow\infty$, \\
\text{$x_l\in B(n_l,n_{l+1})$ and
$X_*
(t,x_l)>Kf(n_l+1)$ for all $l\ge 1$}
\end{varwidth}\right)=1.
\]
Namely,
\[
P\left(\begin{varwidth}[c]{20cm}
\text{there exists a sequence $\{n_l\}_{l=1}^{\infty}$ such that $n_l\rightarrow\infty$ as $l\rightarrow\infty$ and } \\
\text{$\sup_{y\in {\mathbb Z}^d, \,
n_l\le |y|<n_l+1
} X_*
(t,x)>Kf(n_l+1)$ for all $l\ge 1$}
\end{varwidth}\right)
=1.
\]
This yields for any $K>0$,
\[
\limsup_{r\rightarrow\infty}\frac{\sup_{x\in {\mathbb Z}^d, |x|\le r}X_*(t,x)}{f(r)}>K, \quad \text{$P$-a.s.}
\]
and therefore,
\[
\limsup_{r\rightarrow\infty}\frac{\sup_{x\in {\mathbb Z}^d, |x|\le r}X_*(t,x)}{f(r)}=\infty, \quad \text{$P$-a.s.}
\]
By \eqref{eq:x_2-lower}, we further obtain
\begin{equation}\label{eq:limsup-x_2}
\limsup_{r\rightarrow\infty}\frac{\sup_{x\in {\mathbb Z}^d, |x|\le r}X''_2(t,x)}{f(r)}=\infty, \quad \text{$P$-a.s.}
\end{equation}
Hence
the proof is complete under the condition (a).

(3) We finally prove (ii) under the condition (b).
In particular, we consider $\alpha>d=1$ and assume that $\int_1^{\infty}\overline{\eta}_0(f(r))\,\d r=\infty$.
Then, according to \cite[Lemma 3.4]{CK20},
$$\int_1^\infty \overline{\eta}_0(f(r)\vee \overline{\eta}_0^{-1}(1/r))\,\d r=\infty,$$
where $a\vee b=\max\{a,b\}$ and $\overline{\eta}_0^{-1}(r)$ is
the right continuous inverse
of the function $\overline{\eta}_0(r)$.
Let
\begin{equation*}
\begin{split}
X(t,x)
&=
m_1
+\int_{(0,t]\times \R\times (0,\infty)}p_{t-s}(x-y)z{\bf 1}_{\{p_{t-s}(x-y)z\le 1\}}\,(\mu-\nu)(\d s\,\d y\, \d z)\\
&\quad+\int_{(0,t]\times \R\times (0,\infty)}p_{t-s}(x-y)z{\bf 1}_{\{p_{t-s}(x-y)z>1\}}\,\mu(\d s\,\d y\, \d z)\\
&=
m_1
+X_1''(t,x)+X_2''(t,x)
\end{split}
\end{equation*}
with
\[
m_1=
mt
+\int_{(0,t]\times \R\times (0,\infty)}p_s(y)z
\left({\bf 1}_{\{p_s(y)z\le 1\}}-{\bf 1}_{\{z\le 1\}}\right)\,\d s\,\d y\, \lambda(\d z)
\]
and
\[
E\left[\exp\left(i\theta X(t,x)\right)\right]
=\exp\left(i\theta
m_1
+\int_0^{\infty}\left(e^{i\theta u}-1-i\theta u{\bf 1}_{\{0\le u\le 1\}}\right)\,\eta(\d u)\right),
\quad \theta\in \R.
\]
For $X_2''(t,x)$, we can follow the proof of
(2) to verify that
\begin{equation}\label{eq:limsup-x_2--}
\limsup_{r\rightarrow\infty}\frac{\sup_{x\in {\mathbb Z}^d, |x|\le r}X''_2(t,x)}{f(r)\vee \overline{\eta}_0^{-1}(1/r)}=\infty, \quad \text{$P$-a.s.}
\end{equation}.

We turn to the estimate of $X_1''(t,x)$.
By \cite[Theorem 1]{MR14} with $p=4$ and $\alpha=2$,
\begin{align*}
&E\left[\sup_{x\in [0,1]\cap {\mathbb Z}}|X''_1(t,x)|^4\right]
\le 2 E\left[|X''_1(t,0)|^4\right] \\
&=2E\left[
\left(\int_{(0,t]\times \R\times (0,\infty)}p_{t-s}(y)z{\bf 1}_{\{p_{t-s}(y)z\le 1\}}
\,(\mu-\nu)(\d s\,\d y\,\d z)\right)^4\right]\\
&\le
c_1
\Bigg[\left(\int_{(0,t]\times \R\times (0,\infty)}p_{t-s}(y)^2 z^2{\bf 1}_{\{p_{t-s}(y)z\le 1\}}
\, \d s\,\d y\,\lambda(\d z)\right)^2\\
&\qquad+ \int_{(0,t]\times \R\times (0,\infty)}p_{t-s}(y)^4 z^4{\bf 1}_{\{p_{t-s}(y)z\le 1\}}
\, \d s\,\d y\,\lambda(\d z) \Bigg]\\
&\le c_1\left({\rm (I)}^2+{\rm (I)}\right),
\end{align*}
where
\begin{align*} {\rm (I)}=& \int_{(0,t]\times \R\times (0,1]}p_s(y)^2 z^2{\bf 1}_{\{p_{s
}(y)z\le 1\}}
\, \d s\,\d y\,\lambda(\d z)\\
&+\int_{(0,t]\times \R\times (1,\infty)}p_s(y)^2 z^2{\bf 1}_{\{p_{s
}(y)z\le 1\}}
\, \d s\,\d y\,\lambda(\d z)\\
&={\rm (I)}_1+{\rm (I)}_2.
\end{align*}

Furthermore,
\begin{align*}
{\rm (I)}_1
&\le
\int_{(0,t]\times \R\times (0,1]}g(0)s^{-1/\alpha} p_s(y) z^2{\bf 1}_{\{p_s(y)z\le 1\}}
\, \d s\,\d y\,\lambda(\d z)\\
&\le
c_2
t^{1-1/\alpha}\int_{(0,1]}z^2\,\lambda(\d z).
\end{align*}
On the other hand,
we have for
all $\gamma\in (1/(1+\alpha),1)$,
\begin{align*}
{\rm (I)}_2&=
\int_{(0,t]\times \R\times(1,\infty)}
(p_s(y) z)^{\gamma}(p_s(y)z)^{2-\gamma}{\bf 1}_{\{p_{s
}(y)z\le 1\}}
\, \d s\,\d y\,\lambda(\d z)\\
&\le
c_1
\int_{(0,t]\times \R}p_s(y)^{\gamma}\,\d s\,\d y\int_{(1,\infty)}z^{\gamma}\,\lambda(\d z)
\asymp t^{1+(1-\gamma)/\alpha},
\end{align*}
where in the last inequality we used the fact that for all $\gamma\in (1/(1+\alpha),1)$,
\begin{align*}
&\int_{(0,t]\times \R}p_s(y)^{\gamma}\,\d s\,\d y
\asymp \int_{(0,t]\times \R}\left(\frac{1}{s^{1/\alpha}}\wedge \frac{s}{|y|^{1+\alpha}}\right)^{\gamma}\,\d s\,\d y\\
&=\int_0^t\left(\int_{|y|<s^{1/\alpha}}s^{-\gamma/\alpha}\,\d y\right)\,\d s
+\int_0^t\left\{\int_{|y|\ge s^{1/\alpha}}\left(\frac{s}{|y|^{1+\alpha}}\right)^{\gamma}\,\d y\right\}\,\d s\\
&=2\int_0^t s^{(1-\gamma)/\alpha}\,\d s
+\frac{2}{\gamma(1+\alpha)-1}
\int_0^t s^{\gamma} s^{(1-\gamma(1+\alpha))/\alpha}\,\d s\\
&  \preceq t^{1+(1-\gamma)/\alpha}.
\end{align*}
We thus have
\[
E\left[\sup_{x\in [0,1]\cap {\mathbb Z}}|X''_1(t,x)|^4\right]<\infty.
\]
Furthermore, according to Lemma \ref{lem:eta_0} (i),
$$\liminf_{r\rightarrow\infty}
r^{1+\alpha}\overline{\eta_0}(r)>0$$
and so
$r^{1/(1+\alpha)}\preceq \overline{\eta_0}^{-1}(1/r)$ for all $r>1$.
This implies that
$$\int_1^\infty
(f(r)\vee \overline{\eta}_0^{-1}(1/r))^{-4}
\,\d r
\le \int_1^\infty (\overline{\eta}_0^{-1}(1/r))^{-4}\,\d r\preceq \int_1^\infty r^{-4/(1+\alpha)}\,\d r<\infty.$$
Therefore,
\begin{align*}
&\sum_{n=1}^{\infty}P\left(
\sup_{x\in [n,n+1]\cap {\mathbb Z}}
|X''_1(t,x)|>f(n)\vee \overline{\eta}_0^{-1}(1/n)\right)\\
&=\sum_{n=1}^{\infty}P\left(
\sup_{x\in [0,1]\cap {\mathbb Z}}
|X''_1(t,x)|>f(n)\vee \overline{\eta}_0^{-1}(1/n)\right)\\
&\le E\left[
\sup_{x\in [0,1]\cap {\mathbb Z}}
|X''_1(t,x)|^4
\right]
\sum_{n=1}^{\infty}\frac{1}{(f(n)\vee \overline{\eta}_0^{-1}(1/n))^4}<\infty.
\end{align*}
Following the proof of \eqref{eq:sup-lim}, we have
\[
\lim_{r\rightarrow\infty}\frac{
\sup_{x\in {\mathbb Z}, \, |x|\le r}
|X_1''(t,x)|
}{f(r)\vee \overline{\eta}_0^{-1}(1/r)}=0, \quad \text{$P$-a.s.}
\]
Combining this with
\eqref{eq:limsup-x_2--},
we obtain that
\[
\limsup_{r\rightarrow\infty}
\frac{\sup_{x\in {\mathbb Z}, \, |x|\le r}X(t,x)}{f(r)}
\ge \limsup
_{r\rightarrow\infty}\frac{
\sup_{x\in {\mathbb Z}, \, |x|\le r}
X(t,x)}{f(r)\vee \overline{\eta}_0^{-1}(1/r)}=
\infty,
\quad \text{$P$-a.s.,}
\] which completes the proof of (ii).
\end{proof}

As mentioned above, $\overline{\eta_0}(r)\le \overline{\eta}(r)$ for any $r>0$.
The next lemma provides more precise asymptotic relation between
$\overline{\eta_0}(r)$ and $\overline{\eta}(r)$.
Recall that, according to Lemma \ref{lem:eta-express} (i),
$\overline{\eta}(r)<\infty$ for any $r>0$ if and only if
\eqref{eq:small-eta-conv} and \eqref{eq:big-conv} hold.
\begin{lem}\label{lem:eta-eta_0}
Suppose that \eqref{eq:small-eta-conv} and \eqref{eq:big-conv} hold.
Then the following statements hold.
\begin{enumerate}
\item Suppose
that either
of the following conditions holds{\rm :}
\begin{enumerate}
\item[{\rm (a)}]
$\displaystyle\int_{(1,
\infty)}z^{1+\alpha/d}\,\lambda(\d z)<\infty${\rm ;}
\item[{\rm (b)}]
There exist constants $\delta>d/(d+\alpha)$,
$c>0$ and $M>0$ such that
\begin{equation}\label{eq:lambda-ratio}
\frac{\overline{\lambda}(ry)}{\overline{\lambda}(r)}\le cy^{-\delta}, \quad r>M, \, y>M.
\end{equation}
\end{enumerate}
Then $\overline{\eta_0}(r)\asymp \overline{\eta}(r)$ as $r\rightarrow\infty$.

\item[{\rm (ii)}]
Suppose that ${\rm supp}[\lambda]\subset [1,\infty)$ and
$\overline{\lambda}(r)
\asymp
l(r)/r^{d/(d+\alpha)}$ for $r\ge 1$, where $l(r)$ is a
slowly varying function at infinity with $\int_1^{\infty}l(r)/r\,\d r<\infty$.
Then $\overline{\eta_0}(r)\asymp \overline{\lambda}(r)$
and $\overline{\eta_0}(r)=o(\overline{\eta}(r))$ as $r\rightarrow\infty$.
\end{enumerate}
\end{lem}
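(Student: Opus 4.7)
The approach hinges on a direct comparison of the two-term asymptotic expressions already at our disposal. By \eqref{eq:eta-upper-0}--\eqref{eq:eta-lower-0} in the proof of Lemma \ref{lem:eta-express}, with $M=g(0)$ and $c_0=t^{d/\alpha}/M$,
\[
\overline{\eta}(r)\asymp \frac{1}{r^{1+\alpha/d}}\int_{(0,c_0 r]}z^{1+\alpha/d}\,\lambda(\d z)
+\frac{1}{r^{d/(d+\alpha)}}\int_{(c_0 r,\infty)}z^{d/(d+\alpha)}\,\lambda(\d z),
\]
while \eqref{e:llffpp} gives $\overline{\eta_0}(r)\asymp \Lambda_\kappa(r):=r^{-(1+\alpha/d)}\int_{(0,\kappa r]}z^{1+\alpha/d}\,\lambda(\d z)+\overline{\lambda}(\kappa r)$ for every fixed $\kappa>0$ (and, from the proof of Lemma \ref{lem:eta_0}(i), $\Lambda_{\kappa_1}\asymp\Lambda_{\kappa_2}$ for any two choices). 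Since $\overline{\eta_0}\le\overline{\eta}$ by construction, part (i) reduces to producing the reverse comparison, and part (ii) reduces to quantifying by how much the tail of $\overline{\eta}$ beats $\overline{\lambda}$.

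For (i), the first term of $\overline{\eta}$ is already matched, up to a constant, by the first term of $\Lambda_\kappa$ upon a suitable choice of $\kappa$; the whole task is to bound the tail $r^{-d/(d+\alpha)}\int_{(c_0 r,\infty)}z^{d/(d+\alpha)}\,\lambda(\d z)$ by $\overline{\eta_0}(r)$. In case (a), since $z\ge c_0 r$ implies $z^{d/(d+\alpha)}\le (c_0 r)^{d/(d+\alpha)-(1+\alpha/d)}z^{1+\alpha/d}$, the algebraic identity $d/(d+\alpha)+\bigl[(1+\alpha/d)-d/(d+\alpha)\bigr]=1+\alpha/d$ shows the tail is of order $O(r^{-(1+\alpha/d)})$, which is absorbed by $\Lambda_\kappa(r)\succeq r^{-(1+\alpha/d)}\int_{(0,\kappa r]}z^{1+\alpha/d}\,\lambda(\d z)$ (positive once $r$ is large, as $\lambda$ is nontrivial). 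In case (b), the plan is to apply the integration-by-parts identity
\[
\int_{(c_0 r,\infty)}z^{d/(d+\alpha)}\,\lambda(\d z)=(c_0 r)^{d/(d+\alpha)}\overline{\lambda}(c_0 r)+\frac{d}{d+\alpha}\int_{c_0 r}^{\infty}v^{-\alpha/(d+\alpha)}\overline{\lambda}(v)\,\d v
\]
(already used in the proof of Lemma \ref{lem:eta-express-2}), and then to invoke \eqref{eq:lambda-ratio} on $[Mr,\infty)$: the hypothesis $\delta>d/(d+\alpha)$ is exactly what makes $\int v^{-\alpha/(d+\alpha)-\delta}\,\d v$ convergent at infinity, and it yields an estimate of order $\overline{\lambda}(r)\,r^{d/(d+\alpha)}$. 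Dividing by $r^{d/(d+\alpha)}$ gives the tail $\preceq\overline{\lambda}(r)\asymp\overline{\lambda}(\kappa r)\preceq \Lambda_\kappa(r)\asymp\overline{\eta_0}(r)$, as required.

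For (ii), the support condition $\mathrm{supp}[\lambda]\subset [1,\infty)$ allows the integration by parts
\[
\int_{[1,R]}z^{1+\alpha/d}\,\lambda(\d z)=\overline{\lambda}(1)-R^{1+\alpha/d}\overline{\lambda}(R)+\bigl(1+\tfrac{\alpha}{d}\bigr)\int_1^R z^{\alpha/d}\overline{\lambda}(z)\,\d z;
\]
inserting $\overline{\lambda}(z)\asymp l(z)/z^{d/(d+\alpha)}$ and applying Karamata's theorem (the exponent $\alpha/d-d/(d+\alpha)$ lies above $-1$), the last integral is of order $R^{1+\alpha/d-d/(d+\alpha)}l(R)\asymp R^{1+\alpha/d}\overline{\lambda}(R)$. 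Dividing by $r^{1+\alpha/d}$ shows that $\Lambda_\kappa(r)$, and hence $\overline{\eta_0}(r)$, is $\asymp\overline{\lambda}(r)$. To establish $\overline{\eta_0}(r)=o(\overline{\eta}(r))$, I invoke Lemma \ref{lem:eta-express-2}(ii-b), whose proof shows that the dominant (tail) part of $\overline{\eta}(r)$ is comparable to $r^{-d/(d+\alpha)}\,\tilde l(r)$ with $\tilde l(r):=\int_r^\infty l(u)/u\,\d u$. By \cite[p.\ 27, Proposition 1.5.9b]{BGT89}, $\tilde l$ is slowly varying and $l(r)/\tilde l(r)\to 0$ whenever $\int_1^\infty l(u)/u\,\d u<\infty$, so
\[
\frac{\overline{\eta_0}(r)}{\overline{\eta}(r)}\asymp \frac{l(r)/r^{d/(d+\alpha)}}{r^{-d/(d+\alpha)}\tilde l(r)}=\frac{l(r)}{\tilde l(r)}\to 0.
\]

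The main technical obstacle I anticipate is in (i)(b): the bound \eqref{eq:lambda-ratio} with $\delta>d/(d+\alpha)$ sits exactly at the boundary of integrability of $v^{-\alpha/(d+\alpha)-\delta}$, so the range $[c_0 r,\infty)$ must be split into $[c_0 r,Mr]$ (handled by the monotonicity of $\overline{\lambda}$) and $[Mr,\infty)$ (where \eqref{eq:lambda-ratio} applies), and some care is needed to convert the resulting estimate, which naturally involves $\overline{\lambda}(c_0 r)$, into one phrased in terms of $\overline{\lambda}(r)$. The Karamata/de Haan style analysis in (ii), by contrast, should go through using only standard slowly-varying manipulations.
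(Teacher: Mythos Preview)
Your approach is essentially the same as the paper's: both arguments reduce everything to comparing the two-term expression $r^{-(1+\alpha/d)}\int_{(0,\kappa r]}z^{1+\alpha/d}\,\lambda(\d z)+\overline{\lambda}(\kappa r)$ for $\overline{\eta_0}$ with the analogous two-term expression for $\overline{\eta}$, handle the tail in (i)(b) via the Fubini/integration-by-parts identity plus the ratio bound \eqref{eq:lambda-ratio}, and in (ii) combine Karamata with Lemma~\ref{lem:eta-express-2}(ii-b) and Proposition~1.5.9b of \cite{BGT89}. The only cosmetic difference is that for (i)(a) the paper simply quotes Lemma~\ref{lem:eta-express-2}(i) to get $\overline{\eta}(r)\asymp r^{-(1+\alpha/d)}$ directly, while you redo that computation by hand.

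One small slip to fix: in your chain ``tail $\preceq\overline{\lambda}(r)\asymp\overline{\lambda}(\kappa r)\preceq\Lambda_\kappa(r)$'' for case (i)(b), the step $\overline{\lambda}(r)\asymp\overline{\lambda}(\kappa r)$ is not justified, since \eqref{eq:lambda-ratio} is only a one-sided bound and does not force $\overline{\lambda}$ to be regularly varying. Fortunately the step is unnecessary: the integration-by-parts estimate already delivers the tail $\preceq\overline{\lambda}(c_0 r)$ (and $\preceq\overline{\lambda}(Mr)$ on the far range), and you can feed this directly into $\overline{\lambda}(c_0 r)\le\Lambda_{c_0}(r)\asymp\overline{\eta_0}(r)$ using the comparability $\Lambda_{\kappa_1}\asymp\Lambda_{\kappa_2}$ that you already invoked. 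This is exactly what the paper does (it lands on $\overline{\lambda}(c_1 r)$ and compares to $\Lambda_{1/(2c_0)}$ via \eqref{e:llffpp}), so after this correction your argument and the paper's coincide.
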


\begin{proof}
(1) We first prove (i) under the condition (a).
By Lemma \ref{lem:eta-express-2}(i) and Lemma \ref{lem:eta_0}(i)
\[
\overline{\eta}(r)\asymp \frac{1}{r^{1+\alpha/d}} \preceq \overline{\eta_0}(r)\le \overline{\eta}(r).
\]
Hence the proof is complete.

(2) We next prove (i) under the condition (b).
Let
$c_1
\ge 2t^{d/\alpha}/M$.
Then by \eqref{eq:eta-express},
\begin{align*}
\overline{\eta}(r)
&=\omega_d \Biggl\{\int_{(0,c_1r]}
\left(\int_0^{t\wedge (Mz/r)^{\alpha/d}}
g^{-1}\left(\frac{s^{d/\alpha}r}{z}\right)^d s^{d/\alpha}\,\d s\right)\,\lambda(\d z)\\
&\qquad\quad +\int_{(c_1r,\infty)}
\left(\int_0^{t\wedge (Mz/r)^{\alpha/d}}
g^{-1}\left(\frac{s^{d/\alpha}r}{z}\right)^d s^{d/\alpha}\,\d s\right)\,\lambda(\d z)\Biggr\}\\
&=\omega_d({\rm (I)}+{\rm (II)}).
\end{align*}

According to the argument for
\eqref{eq:eta_0-1-asymp},
\begin{equation}\label{eq:int-small-1}
{\rm (I)}
\preceq \frac{1}{r^{1+\alpha/d}}\int_{(0,c_1r]}z^{1+\alpha/d}\,\lambda(\d z).
\end{equation}
We obtain
\begin{equation}\label{eq:int-large-1}
\begin{split}
{\rm (II)}
&=\int_{(c_1 r,\infty)}
\left(\int_0^t
g^{-1}\left(\frac{s^{d/\alpha}r}{z}\right)^d s^{d/\alpha}\,\d s\right)\,\lambda(\d z)\\
&\asymp \int_{(c_1r,\infty)}
\left(\int_0^t
\left(\frac{z}{s^{d/\alpha}r}\right)^{d/(d+\alpha)} s^{d/\alpha}
\,\d s\right)\,\lambda(\d z)\\
&=\frac{d+\alpha}{2d+\alpha}\frac{t^{1+d/(d+\alpha)}}{r^{d/(d+\alpha)}}
\int_{(c_1r,\infty)}z^{d/(d+\alpha)}\,\lambda(\d z).
\end{split}
\end{equation}
Since \[
z^{d/(d+\alpha)}
=\frac{d}{d+\alpha}\int_0^z\frac{1}{u^{\alpha/(d+\alpha)}}\,\d u,
\]
we have by the Fubini theorem,
\begin{align*}
&\int_{(c_1r,\infty)}z^{d/(d+\alpha)}\,\lambda(\d z)
=\frac{d}{d+\alpha}\int_{(c_1r,\infty)}\left(\int_0^z \frac{1}{u^{\alpha/(d+\alpha)}}\,\d u\right)\,\lambda(\d z)\\
&=\frac{d}{d+\alpha}\int_{c_1r}^{\infty}\frac{1}{u^{\alpha/(d+\alpha)}}\left(\int_{(u,\infty)}\,\lambda(\d z)\right)\,\d u
+\frac{d}{d+\alpha}\int_0^{c_1r}\frac{1}{u^{\alpha/(d+\alpha)}}\left(\int_{(c_1r,\infty)}\,\lambda(\d z)\right)\,\d u\\
&=\frac{d}{d+\alpha}\int_{c_1r}^{\infty}\frac{\overline{\lambda}(u)}{u^{\alpha/(d+\alpha)}}\,\d u
+\left(c_1r\right)^{d/(d+\alpha)}\overline{\lambda}\left(c_1r\right).
\end{align*}
Then by \eqref{eq:lambda-ratio},
\begin{align*}
\int_{c_1r}^{\infty}\frac{\overline{\lambda}(u)}{u^{\alpha/(d+\alpha)}}\,\d u
&\preceq
\overline{\lambda}\left(c_1r\right)\int_{c_1r}^{\infty}\frac{1}{u^{\alpha/(d+\alpha)}}
\left(\frac{c_1r}{u}\right)^{\delta}\,\d u\\
&=
c_1^{d/(d+\alpha)}
\left(\delta-\frac{d}{d+\alpha}\right)^{-1}r^{d/(d+\alpha)}\overline{\lambda}\left(c_1r\right),
\end{align*}
which yields
\[
\int_{(c_1r,\infty)}z^{d/(d+\alpha)}\,\lambda(\d z)
  \preceq r^{d/(d+\alpha)}\overline{\lambda}\left(c_1r\right).
\]
Hence by \eqref{eq:int-large-1},
\[
{\rm (II)}\preceq \overline{\lambda}\left(c_1r\right).
\]
Combining this with \eqref{eq:int-small-1},
we obtain by
\eqref{e:llffpp},
\begin{equation}\label{eq:eta-upper}
\overline{\eta}(r)
\preceq \frac{1}{r^{1+\alpha/d}}\int_{(0,c_1r]}z^{1+\alpha/d}\,\lambda(\d z)
+\overline{\lambda}\left(c_1r\right)\asymp
\bar\eta_0(r).
\end{equation}

(3) We finally prove (ii). Under the condition (ii),
Lemma \ref{lem:eta-express-2} (ii-b) and \cite[p.\ 27, Proposition 1.5.9b]{BGT89}
imply that
\[
\lim_{r\rightarrow\infty}\frac{\overline{\eta}(r)}{\overline{\lambda}(r)}
=\lim_{r\rightarrow\infty}\frac{1}{l(r)}\int_r^{\infty}\frac{l(u)}{u}\,\d u=\infty,
\]
whence $\overline{\lambda}(r)=o(\overline{\eta}(r))$ as $r\rightarrow\infty$.

On the other hand,
it follows by \cite[Proposition 1.5.8]{BGT89} that as $r\rightarrow\infty$,
\[
\int_1^r u^{\alpha/d}\overline{\lambda}(u)\,\d u
\asymp\int_1^r\frac{l(u)}{u^{d/(d+\alpha)-\alpha/d}}\,\d u
\asymp r^{\alpha/d+\alpha/(d+\alpha)}l(r)
\asymp r^{1+\alpha/d}\overline{\lambda}(r).
\]
Then as $r\rightarrow\infty$,
\begin{align*}
\int_{(1,r]}z^{1+\alpha/d}\,\lambda(\d z)
&
\preceq \int_{(1,r]}\left(\int_1^z u^{\alpha/d}\,\d u\right)\,\lambda(\d z)
\preceq\int_1^r \left(\int_{(u,r]} \,\lambda(\d z)\right)\,u^{\alpha/d}\,\d u\\
&\preceq\int_1^r u^{\alpha/d}\overline{\lambda}(u)\,\d u
\preceq r^{1+\alpha/d}\overline{\lambda}(r).
\end{align*}
Furthermore, since ${\rm supp}[\lambda]\subset[1,\infty)$,
we have $\int_{(0,1]}z^{1+\alpha/d}\,\lambda(\d z)=0$.
Hence by
\eqref{e:llffpp},
$\overline{\eta_0}(r)\asymp \overline{\lambda}(r)$ as
$r\rightarrow\infty$.
By the argument above, the proof is complete.
\end{proof}

\subsection{Examples}\label{S:example}
In this subsection, we calculate the growth order of the local supremum of $X(t,x)$
for a large class of
concrete L\'evy
noises.
Let $k(z)$ be a nonnegative Borel measurable function on
$(0,\infty)$ such that
for some $\kappa\in (0,\alpha/d)$ and $\beta>d/(d+\alpha)$,
\[
k(z)\preceq  \frac{1}{z^{1+\kappa}},\quad 0<z<1
\]
and
\begin{equation}\label{eq:k-large}
k(z)=\frac{l(z)}{z^{1+\beta}}, \quad z\ge 1
\end{equation}
with $l(z)$ being a slowly varying function at infinity.
Assume that the L\'evy measure $\lambda(\d z)$
associated with the L\'evy space-time white noise $\Lambda(t,x)$
in the fractional stochastic heat equation \eqref{eq:fractional-she} is given by $\lambda(\d z)=k(z)\,\d z$.
Define
\[
L(r)=r^{\beta}\int_r^{\infty}\frac{l(z)}{z^{1+\beta}}\,\d z.\]
Then
\[
\bar{\lambda}(r)=\int_r^{\infty}\frac{l(z)}{z^{1+\beta}}\,\d z
=\frac{1}{r^{\beta}}r^{\beta}\int_r^{\infty}\frac{l(z)}{z^{1+\beta}}\,\d z=\frac{L(r)}{r^{\beta}}.
\]
Since $L(r)$ is a slowly varying function at infinity by \cite[Proposition 1.5.10]{BGT89},
$\lambda$ satisfies the full conditions in Theorems \ref{thm:limsup-whole} and \ref{thm:local-growth}.

In what follows, we take $l(z)=\beta$ in \eqref{eq:k-large} for simplicity;
which yields $\bar{\lambda}(r)=1/r^{\beta}$ for $r\ge 1$.
\begin{enumerate}
\item[(1)]
We first calculate the growth order of $\sup_{|x|\le r}X(t,x)$ as $r\rightarrow\infty$.
\begin{enumerate}
\item[(a)] Let  $\beta\ne \alpha/d$.
Then, by Lemma \ref{lem:tau}, $\overline{\tau}(r)\asymp 1/r^{\gamma}$ with $\gamma=\beta\wedge (\alpha/d)$, as $r\rightarrow\infty$. Therefore, by Theorem \ref{thm:limsup-whole}, for any nondecreasing function $f:(0,\infty)\to (0,\infty)$,
$$\limsup_{r\to\infty}\frac{\sup_{|x|\le r}X(t,x)}{f(r)}=\infty \quad \text {or} \quad\limsup_{r\to\infty}\frac{\sup_{|x|\le r}X(t,x)}{f(r)}=0$$ according to whether the integral $$\int_1^\infty r^{d-1}f(r)^{-\gamma}\,dr$$ diverges or converges.
In particular, when $f(r)=r^{d/\gamma}(\log r)^p$ for some $p>0$,
\[
\int_1^{\infty}r^{d-1}f(r)^{-\gamma}\,\d r<\infty \iff p>\frac{1}{\gamma}.
\]
Hence, Theorem \ref{thm:limsup-whole} implies the following:
\begin{itemize}
\item if $p>1/\gamma$, then
\begin{equation}\label{eq:tau-p-large}
\lim_{r\rightarrow\infty}\frac{\sup_{|x|\le r}X(t,x)}{r^{d/\gamma}(\log r)^p}
=0, \quad \text{$P$-a.s.;}
\end{equation}
\item if $0<p\le 1/\gamma$, then
\begin{equation}\label{eq:tau-p-small}
\limsup_{r\rightarrow\infty}
\frac{\sup_{|x|\le r}X(t,x)}{r^{d/\gamma}(\log r)^p}
=\infty, \quad \text{$P$-a.s.}
\end{equation}
\end{itemize}

\item[(b)] Suppose that $\beta=\alpha/d>d/(d+\alpha)$.
Then, by Lemma \ref{lem:tau} (ii),
\[
\overline{\tau}(r)\asymp \frac{\log r}{r^{\alpha/d}}, \quad
r\rightarrow\infty.
\]Therefore, by Theorem \ref{thm:limsup-whole}, for any nondecreasing function $f:(0,\infty)\to (0,\infty)$,
$$\limsup_{r\to\infty}\frac{\sup_{|x|\le r}X(t,x)}{f(r)}=\infty \quad \text {or} \quad\limsup_{r\to\infty}\frac{\sup_{|x|\le r}X(t,x)}{f(r)}=0$$ according to whether the integral $$\int_1^\infty r^{d-1}f(r)^{-\alpha/d}\log f(r)\,dr$$ diverges or converges.
In particular, for the test function $f(r)=r^{d^2/\alpha}(\log r)^p$ with $p>0$,
\[
\int_1^{\infty}r^{d-1}f(r)^{-\alpha/d}\log f(r)\,\d r<\infty \iff p>\frac{2d}{\alpha}.
\]
Thus, Theorem \ref{thm:limsup-whole} yields that  \begin{itemize}
\item if $p>2d/\alpha$, then \eqref{eq:tau-p-large} holds with $\gamma=\beta=\alpha/d$;
\item
if $0<p\le 2d/\alpha$, then \eqref{eq:tau-p-small} holds with $\gamma=\alpha/d$. \end{itemize}

\end{enumerate}

\item[(2)] We next calculate the growth order of $\sup_{x\in {\mathbb Z}^d, \, |x|\le r}X(t,x)$.
\begin{enumerate}
\item[(a)] Let $\beta\ne 1+\alpha/d$.
Then, by Lemma \ref{lem:eta-express-2} and Lemma \ref{lem:eta-eta_0},
$
\overline{\eta}(r)\asymp \overline{\eta_0}(r)\asymp {r^{-\delta}}
$ with $\delta=\beta\wedge (1+\alpha/d)$, as $r\to \infty$.
Therefore, by Theorem \ref{thm:local-growth},
for any nondecreasing function $f:(0,\infty)\to (0,\infty)$,
$$\limsup_{r\to\infty}\frac{\sup_{x\in \mathbb Z^d, |x|\le r}X(t,x)}{f(r)}=\infty
\quad \text {or} \quad\limsup_{r\to\infty}\frac{\sup_{x\in \mathbb Z^d, |x|\le r}X(t,x)}{f(r)}=0$$
according to whether the integral
$$\int_1^\infty r^{d-1}f(r)^{-\delta}\,dr$$
diverges or converges.
In particular, Theorem \ref{thm:local-growth} implies the following:
\begin{itemize}
\item if $p>1/\delta$, then
\begin{equation}\label{eq:eta-p-large}
\lim_{r\rightarrow\infty}
\frac{\sup_{x\in {\mathbb Z}^d, \, |x|\le r}X(t,x)}{r^{d/\delta}(\log r)^p}=0,
\quad \text{$P$-a.s.;}
\end{equation}
\item if $0<p\le 1/\delta$, then
\begin{equation}\label{eq:eta-p-small}
\limsup_{r\rightarrow\infty}
\frac{\sup_{x\in {\mathbb Z}^d, \, |x|\le r}X(t,x)}{r^{d/\delta}(\log r)^p}=\infty,
\quad \text{$P$-a.s.}
\end{equation}
\end{itemize}
\item[(b)] Let $\beta=1+\alpha/d$.
Then, by Lemma \ref{lem:eta-express-2} (ii) and Lemma \ref{lem:eta-eta_0},
\[
\overline{\eta}(r)\asymp \overline{\eta_0}(r)\asymp\frac{\log r}{r^{1+\alpha/d}},\quad r\to \infty.
\]
Therefore,  by Theorem \ref{thm:local-growth},
for any nondecreasing function $f:(0,\infty)\to (0,\infty)$,
$$\limsup_{r\to\infty}\frac{\sup_{x\in \mathbb Z^d, |x|\le r}X(t,x)}{f(r)}=\infty
\quad \text {or} \quad\limsup_{r\to\infty}\frac{\sup_{x\in \mathbb Z^d, |x|\le r}X(t,x)}{f(r)}=0$$
according to whether the integral $$\int_1^\infty r^{d-1}f(r)^{-(1+\alpha/d)}\log f(r)\,dr$$
diverges or converges.
In particular, by Theorem \ref{thm:local-growth},
\begin{itemize}
\item
if $p>2d/(d+\alpha)$, then
\eqref{eq:eta-p-large} holds with $\delta=\beta$;
\item
if $0<p\le 2d/(d+\alpha)$, then
\eqref{eq:eta-p-small} holds with $\delta=\beta$.\end{itemize}
\end{enumerate}
\end{enumerate}

\begin{rem}\rm
We explain the consequence of the assertions (1) and (2) above.
Recall that $\gamma=\beta\wedge (\alpha/d)$ and $\delta=\beta\wedge (1+\alpha/d)$.
 \begin{enumerate}
\item If $d/(d+\alpha)<\beta<\alpha/d$, then $\gamma=\delta=\beta$, and so,
 by
\eqref{eq:tau-p-large}--\eqref{eq:eta-p-small}, $\sup_{|x|\le r}X(t,x)$ has the same growth order
with that of $\sup_{x\in {\mathbb Z}^d, \, |x|\le r}X(t,x)$.

\item If $\beta>\alpha/d>d/(d+\alpha)$, then
$\gamma=\alpha/d<\delta$,
and $\sup_{|x|\le r}X(t,x)$ has the higher polynomial growth order
than that of  $\sup_{x\in {\mathbb Z}^d, \, |x|\le r}X(t,x)$.
\item Suppose that $d/(d+\alpha)<\alpha/d$ and $\beta=\alpha/d$.
Then $\gamma=\beta=\alpha/d$, and we have the following statements:

if $p>2d/\alpha$, then
\[
\lim_{r\rightarrow\infty}\frac{\sup_{|x|\le r}X(t,x)}{r^{d^2/\alpha}(\log r)^p}
=\lim_{r\rightarrow\infty}\frac{\sup_{x\in {\mathbb Z}^d, \, |x|\le r}X(t,x)}{r^{d^2/\alpha}(\log r)^p}
=0, \quad \text{$P$-a.s.}
\]

if $d/\alpha<p\le 2d/\alpha$, then
\[
\lim_{r\rightarrow\infty}\frac{\sup_{|x|\le r}X(t,x)}{r^{d^2/\alpha}(\log r)^p}=\infty,
\quad \lim_{r\rightarrow\infty}\frac{\sup_{x\in {\mathbb Z}^d, \, |x|\le r}X(t,x)}{r^{d^2/\alpha}(\log r)^p}
=0, \quad \text{$P$-a.s.}
\]

if $0<p\le d/\alpha$, then
\[
\limsup_{r\rightarrow\infty}\frac{\sup_{|x|\le r}X(t,x)}{r^{d^2/\alpha}(\log r)^p}
=\limsup_{r\rightarrow\infty}\frac{\sup_{x\in {\mathbb Z}^d, \, |x|\le r}X(t,x)}{r^{d^2/\alpha}(\log r)^p}
=\infty, \quad \text{$P$-a.s.}
\]
 \end{enumerate}
 \end{rem}

\section{Appendix}
\subsection{Heat kernels of symmetric stable-L\'evy process}
For $\alpha\in (0,2)$,
let $Z=(\{Z_t\}_{t\ge 0}, \{P_x\}_{x\in {\mathbb R}^d})$ be a (rotationally) symmetric stable-L\'evy process on ${\mathbb R}^d$
generated by $-(-\Delta)^{\alpha/2}$.
Then there exists a positive Borel measurable function
$
p_t(x)=p(t,x):(0,\infty)\times {\mathbb R}^d\to (0,\infty)$ such that
\[
p(t,-x)=p(t,x), \quad t>0, \ x\in {\mathbb R}^d
\]
and
\[
P_x(Z_t\in A)=\int_Ap(t,x-y)\,{\rm d}y, \quad t>0, \ x\in {\mathbb R}^d, \ A\in {\cal B}({\mathbb R}^d).
\]
In particular, there exists a positive, continuous and strictly decreasing function $g(r)$ on $[0,\infty)$ such that
\[
g(r)\asymp 1\wedge \frac{1}{r^{d+\alpha}}, \quad r>0
\]
and
\begin{equation}\label{eq:bound1}
p(t,x)=\frac{1}{t^{d/\alpha}}g\left(\frac{|x|}{t^{1/\alpha}}\right), \quad t>0, \ x\in {\mathbb R}^d;
\end{equation}
that is,
$$p(t,x)\asymp \frac{t}{|x|^{d+\alpha}}\wedge \frac{1}{t^{d/\alpha}}.$$
Then for any $c>0$,
\begin{equation}\label{eq:scale}
p(t,cx)=\frac{1}{c^d}p\left(\frac{t}{c^{\alpha}},x\right), \quad t>0, \ x\in {\mathbb R}^d.
\end{equation}

For $\gamma>0$ and $x,y\in \R^d$, let
\[
Q_{\gamma}(t,x,y)=\int_0^t\int_{\R^d}|p(s,x-z)-p(s,y-z)|^{\gamma}\,\d z\,\d s.
\]

\begin{lem}\label{lem:heat-kernel}
\begin{enumerate}
\item For any
$\gamma\in [1,1+\alpha/d)$,
there exists $c_1>0$ such that
for any unit vector $e\in {\mathbb R}^d$,
$$Q_\gamma(t,0,\pm e)\le c_1.$$
\item Suppose that $\alpha>d=1$ and $\gamma\ge 1$. Let $\gamma_0=(d+\alpha)/(d+1)$ and $T>0$.
Then, for any $t\in [0,T]$ and $x,y\in {\mathbb R}^d$ with $|x-y|\le 1$,
\[
Q_{\gamma}(t,x,y)
\preceq
\begin{dcases}
|x-y|^{\gamma}, & 1\le \gamma<\gamma_0,\\
|x-y|^{\gamma}\log(1+t/|x-y|^{\alpha}), & \gamma=\gamma_0,\\
|x-y|^{d(1-\gamma)+\alpha}, &
\gamma_0<\gamma<1+{\alpha}/{d}.
\end{dcases}
\]
\end{enumerate}
\end{lem}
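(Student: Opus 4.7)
The plan is to estimate $Q_{\gamma}(t,x,y)$ by first reducing to a one-parameter problem via scaling, and then splitting the time integral at $s=1$ into a ``near'' regime (where $p(s,\cdot)$ is singular and $|e|$ dominates $s^{1/\alpha}$) and a ``far'' regime (where $p(s,\cdot)$ is smooth and the mean value theorem applies).

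For part (ii), translation invariance together with the scaling relation \eqref{eq:scale} gives, upon setting $h=y-x$, $e=h/|h|$ and performing the change of variables $z=|h|w$, $s=|h|^{\alpha}u$,
\[
Q_{\gamma}(t,x,y)=|h|^{d(1-\gamma)+\alpha}\,Q_{\gamma}\bigl(t/|h|^{\alpha},0,e\bigr).
\]
Thus it suffices to bound $Q_{\gamma}(T,0,e)$ for $T:=t/|h|^{\alpha}$; after reinserting the prefactor, the three cases in (ii) correspond to the bounds $Q_{\gamma}(T,0,e)\preceq 1$, $\log(1+T)$, or $T^{1-((d+1)\gamma-d)/\alpha}$ according as $\gamma>\gamma_{0}$, $\gamma=\gamma_{0}$, or $1\le\gamma<\gamma_{0}$. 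The borderline logarithmic case uses the arithmetic identity $1-\gamma_{0}+\alpha=\gamma_{0}$ in dimension $d=1$, and the subcritical case $\gamma<\gamma_{0}$ uses the cancellation $d(1-\gamma)+\alpha-(\alpha-(d+1)\gamma+d)=\gamma$ (with $d=1$).

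For the near regime $s\in(0,1]$, I apply the inequality $|a-b|^{\gamma}\le 2^{\gamma-1}(a^{\gamma}+b^{\gamma})$ combined with the self-similarity $p(s,z)=s^{-d/\alpha}g(|z|/s^{1/\alpha})$ to obtain
\[
\int_{\R^{d}}|p(s,z)-p(s,z-e)|^{\gamma}\,\d z\preceq s^{-d(\gamma-1)/\alpha}\int_{\R^{d}}g(|w|)^{\gamma}\,\d w,
\]
where the spatial integral is finite because $g(r)\preceq 1\wedge r^{-(d+\alpha)}$ and $\gamma\ge 1>d/(d+\alpha)$; integration over $s\in(0,1]$ converges precisely when $\gamma<1+\alpha/d$. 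For the far regime $s>1$, the mean value theorem together with the gradient estimate $|\nabla p(s,z)|=s^{-(d+1)/\alpha}|g'(|z|/s^{1/\alpha})|$ and the pointwise decay $|g'(r)|\preceq 1\wedge r^{-(d+\alpha+1)}$ yields
\[
\int_{\R^{d}}|p(s,z)-p(s,z-e)|^{\gamma}\,\d z\preceq s^{-((d+1)\gamma-d)/\alpha}\int_{\R^{d}}|\nabla g(|w|)|^{\gamma}\,\d w,
\]
and integrating $s^{-((d+1)\gamma-d)/\alpha}$ over $(1,T]$ produces exactly the three stated growth regimes in $T$. Part (i) is then immediate: with $|e|=1$ and $t$ fixed, the near contribution is finite under $\gamma<1+\alpha/d$ and the far contribution is finite (growing at worst polynomially in $t$), which gives a bound $c_{1}$ uniform in the unit vector $e$.

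The main technical obstacle will be justifying the derivative estimate $|g'(r)|\preceq 1\wedge r^{-(d+\alpha+1)}$ and the resulting finiteness of $\int_{\R^{d}}|\nabla g(|w|)|^{\gamma}\,\d w$ for $\gamma\ge 1$. This refines the expansion $g(r)\sim c_{d,\alpha}r^{-(d+\alpha)}$ at infinity (quoted in \eqref{eq:g-asymp}) to the derivative level and combines it with smoothness of $g$ at the origin; both facts follow from the standard integral representation of the symmetric $\alpha$-stable density as in \cite{BG60,BJ07}. Once this input is granted, all remaining steps reduce to routine one-dimensional integrations and the bookkeeping of exponents above.
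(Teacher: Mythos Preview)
Your proposal is correct and follows essentially the same route as the paper: the scaling reduction $Q_{\gamma}(t,x,y)=|x-y|^{d(1-\gamma)+\alpha}Q_{\gamma}(t/|x-y|^{\alpha},0,e)$, the split of the time integral at $s=1$, the triangle-inequality bound for the near part, and the mean value theorem plus a gradient estimate for the far part are exactly what the paper does. The only cosmetic difference is that the paper quotes the gradient bound in the form $|\nabla p_s(z)|\asymp|z|\bigl(s|z|^{-(d+2+\alpha)}\wedge s^{-(d+2)/\alpha}\bigr)$ from \cite{BJ07}, while you write it equivalently via $|g'(r)|\preceq 1\wedge r^{-(d+\alpha+1)}$; both lead to the same $s^{-((d+1)\gamma-d)/\alpha}$ integrand on $(1,T]$ and hence the same trichotomy in $\gamma$ relative to $\gamma_0$.
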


\begin{proof}
(1)
By the translation invariance of the Lebesgue measure, for any $\gamma>0$,
there exists a positive constant $c_1:=c_1(\gamma)$ such that
for any unit vector $e\in {\mathbb R}^d$,
\begin{align*}
\int_0^1\int_{{\mathbb R}^d}
\left|p\left(s,z\pm e\right)-p(s,z)\right|^{\gamma}\,{\rm d}z{\rm d}s
&\le c_1\int_0^1\int_{{\mathbb R}^d}\left(p_s(z\pm e)^{\gamma}+p_s(z)^{\gamma}\right)\,{\rm d}z\,{\rm d}s\\
&=2c_1\int_0^1\int_{{\mathbb R}^d}p_s(z)^{\gamma}\,{\rm d}z\,{\rm d}s.
\end{align*}
On the other hand, according to \eqref{eq:bound1},
$$\int_0^1 \int_{{\mathbb R}^d}p_s(z)^{\gamma}\,{\rm d}z\,{\rm d}s
\le \int_0^1 \left(\frac{g(0)}{s^{d/\alpha}}\right)^{\gamma-1}
\int_{{\mathbb R}^d}p_s(z)
\,{\rm d}z\,{\rm d}s
=\int_0^1 \left(\frac{g(0)}{s^{d/\alpha}}\right)^{\gamma-1}\,\d s<\infty,$$
thanks to $1\le \gamma<1+{\alpha}/{d}$.
Thus, the first assertion (i) follows.

(2) We next prove (ii) with $T=1$.
For any $0<t\le 1$ and
$\gamma>0$,
\begin{equation}\label{eq:unit}
\begin{split}
Q_{\gamma}(t,x,y)
&=\int_0^t\int_{{\mathbb R}^d}|p(s,x-y+z)-p(s,z)|^{\gamma}\,{\rm d}z\,{\rm d}s\\
&=|x-y|^d\int_0^t\int_{{\mathbb R}^d}
\left|p\left(s,|x-y|\left(\frac{x-y}{|x-y|}+z\right)\right)-p(s,|x-y|z)\right|^{\gamma}\,{\rm d}z\,{\rm d}s.
\end{split}
\end{equation}
Then, by \eqref{eq:scale},
\begin{equation*}
\begin{split}
&\int_0^t\int_{{\mathbb R}^d}
\left|p\left(s,|x-y|\left(\frac{x-y}{|x-y|}+z\right)\right)-p(s,|x-y|z)\right|^{\gamma}\,{\rm d}z\,{\rm d}s\\
&=\frac{1}{|x-y|^{d\gamma}}\int_0^t
\int_{{\mathbb R}^d}
\left|p\left(\frac{s}{|x-y|^{\alpha}},\frac{x-y}{|x-y|}+z\right)
-p\left(\frac{s}{|x-y|^{\alpha}},z\right)\right|^{\gamma}\,{\rm d}z\,{\rm d}s\\
&=|x-y|^{\alpha-d\gamma}\int_0^{t/|x-y|^{\alpha}}
\int_{{\mathbb R}^d}
\left|p\left(s,\frac{x-y}{|x-y|}+z\right)
-p\left(s,z\right)\right|^{\gamma}\,{\rm d}z\,{\rm d}s.
\end{split}
\end{equation*}
Hence if we let $e_{xy}=(x-y)/|x-y|$, then by \eqref{eq:unit},
$$
Q_{\gamma}(t,x,y)
=|x-y|^{d(1-\gamma)+\alpha}
\int_0^{t/|x-y|^{\alpha}}
\int_{{\mathbb R}^d}
\left|p\left(s,z+e_{xy}\right)
-p\left(s,z\right)\right|^{\gamma}\,{\rm d}z\,{\rm d}s.
$$

Let $\gamma\ge 1$.
We first suppose  that $|x-y|\le t^{1/\alpha}$.
Since
\[
p_s(z+e_{xy})-p_s(z)=\int_0^1\langle \nabla p_s(z+ue_{xy}),e_{xy}\rangle\,{\rm d}u,
\]
we have
\[
|p_s(z+e_{xy})-p_s(z)|^{\gamma}
\le\left(\int_0^1|\nabla p_s(z+ue_{xy})|\,{\rm d}u\right)^{\gamma}
\le \int_0^1|\nabla p_s(z+ue_{xy})|^{\gamma}\,{\rm d}u.
\]
Then by the Fubini theorem,
\begin{align*}
\int_1^{t/|x-y|^{\alpha}}
\int_{{\mathbb R}^d}|p_s(z+e_{xy})-p_s(z)|^{\gamma}\,{\rm d}z\,{\rm d}s
&\le \int_1^{t/|x-y|^{\alpha}}
\left(\int_{{\mathbb R}^d}
\int_0^1|\nabla p_s(z+ue_{xy})|^{\gamma}\,{\rm d}u\,{\rm d}z\right)\,{\rm d}s\\
&=\int_1^{t/|x-y|^{\alpha}}
\left(\int_0^1
\int_{{\mathbb R}^d}|\nabla p_s(z+ue_{xy})|^{\gamma}\,{\rm d}z\,{\rm d}u\right)\,{\rm d}s\\
&=\int_1^{t/|x-y|^{\alpha}}\left(\int_0^1
\int_{{\mathbb R}^d}|\nabla p_s(z)|^{\gamma}\,{\rm d}z\,{\rm d}u\right)\,{\rm d}s\\
&=\int_1^{t/|x-y|^{\alpha}}
\int_{{\mathbb R}^d}|\nabla p_s(z)|^{\gamma}\,{\rm d}z{\rm d}s.
\end{align*}
Furthermore,
it follows from  \cite[Lemma 5]{BJ07} that
\[
|\nabla p_s(z)|\asymp |z|\left(\frac{s}{|z|^{d+2+\alpha}}\wedge \frac{1}{s^{(d+2)/\alpha}}\right),
\]
and
so we get
\begin{align*}
&\int_1^{t/|x-y|^{\alpha}}
\int_{{\mathbb R}^d}|\nabla p_s(z)|^{\gamma}\,{\rm d}z\,{\rm d}s
\asymp
\int_1^{t/|x-y|^{\alpha}}
\int_{{\mathbb R}^d} |z|^{\gamma}
\left(\frac{s}{|z|^{d+2+\alpha}}\wedge \frac{1}{s^{(d+2)/\alpha}}\right)^{\gamma}\,{\rm d}z\,{\rm d}s\\
&=
\int_1^{t/|x-y|^{\alpha}}\frac{1}{s^{(d+2)\gamma/\alpha}}
\left(\int_{|z|\le s^{1/\alpha}} |z|^{\gamma}\,{\rm d}z\right)\,{\rm d}s
+\int_1^{t/|x-y|^{\alpha}}s^{\gamma}
\int_{|z|>s^{1/\alpha}}
\left(\frac{|z|^{\gamma}}{|z|^{(d+2+\alpha)\gamma}}\right)\,{\rm d}z\,{\rm d}s\\
&=J(t,x,y).
\end{align*}
For any $t>0$ and $x,y\in {\mathbb R}^d$ with $|x-y|\le
t^{1/\alpha}$,
\[J(t,x,y)\asymp
\begin{dcases}
(t/|x-y|^{\alpha})^{1+(d-\gamma(d+1))/\alpha}, & 1\le \gamma<\gamma_0,\\
\log(1+t/|x-y|^{\alpha}), & \gamma=\gamma_0,\\
1, & \gamma>\gamma_0.
\end{dcases}
\]
Therefore,
according to all the conclusions above and the first assertion (i),
for any $t>0$ and $x,y\in {\mathbb R}^d$ with $|x-y|\le t^{1/\alpha}$,
\begin{align*}
&|x-y|^{d(1-\gamma)+\alpha}
\int_0^{t/|x-y|^{\alpha}}\int_{{\mathbb R}^d}|p_s(z+e_{xy})-p_s(z)|^{\gamma}\,{\rm d}z{\rm d}s\\
&\le |x-y|^{d(1-\gamma)+\alpha}(J(t,x,y)+c_0)
\preceq
\begin{dcases}
|x-y|^{\gamma}, & 1\le \gamma<\gamma_0,\\
|x-y|^{\gamma}\log(1+ t/|x-y|^{\alpha}), & \gamma=\gamma_0,\\
|x-y|^{d(1-\gamma)+\alpha},
&\gamma_0<\gamma<1+\alpha/d.
\end{dcases}
\end{align*}

We next suppose that $t^{1/\alpha}<|x-y|\le 1$.
Then, according the first assertion (i),
\begin{align*}
&|x-y|^{d(1-\gamma)+\alpha}
\int_0^{t/|x-y|^{\alpha}}
\int_{{\mathbb R}^d}
\left|p\left(s,z+e_{xy}\right)
-p\left(s,z\right)\right|^{\gamma}\,{\rm d}z\,{\rm d}s\\
&\le |x-y|^{d(1-\gamma)+\alpha}
\int_0^1\int_{{\mathbb R}^d}
|p(s,z+e_{xy})-p(s,z)|^{\gamma}
\,{\rm d}z{\rm d}s\\
&\preceq |x-y|^{d(1-\gamma)+\alpha}
\preceq
\begin{dcases}
|x-y|^{\gamma}, & 1\le
\gamma<\gamma_0,\\
|x-y|^{\gamma}\log(1+t/|x-y|^{\alpha}), & \gamma=\gamma_0,\\
|x-y|^{d(1-\gamma)+\alpha}, & \gamma_0<\gamma<1+\alpha/d.
\end{dcases}
\end{align*}
The proof is complete.
\end{proof}

\subsection{Poissonian functional associated with $\tau$}\label{section:tau}
In this subsection, we introduce a functional of the Poisson random measure
associated with the measure $\tau$
defined by \eqref{eq:def-tau}.
Let $A\subset\R^d$ be a
bounded
Borel set with $0<|\bar A|<\infty$, and define
\begin{equation}\label{eq:X_A}
X_A(t):=
\sum_{i=1}^{\infty}\frac{\zeta_i}{(t-\tau_i)^{d/\alpha}}{\bf 1}_{\{\eta_i\in \overline{A}, \, \tau_i\le t\}}.
\end{equation}
Clearly, $X_A(t)$ is a functional of the
Poisson random measure.

We first consider the existence of $X_A(t)$.

\begin{prop}\label{thm:xa-conv}
Let $A\subset\R^d$ be a  Borel set with $0<|\bar A|<\infty$.
Then, for any $t>0$, $X_A(t)$ is convergent P-a.s.
if and only if
\begin{equation}\label{eq:xa-conv-1-}
\int_{(0,1]}z^{(1\wedge (\alpha/d))}|\log z|^{{\bf 1}_{\{d=\alpha\}}}\,\lambda(\d z)<\infty.
\end{equation}
In this case, for any $\theta\in \R$,
\[
E[e^{i\theta X_A(t)}]=\exp\left(|\overline{A}|\int_{(0,\infty)}(e^{i\theta u}-1)\,\tau({\rm d}u)\right).
\]
\end{prop}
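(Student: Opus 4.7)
The plan is to recognize $X_A(t)$ as a Poisson integral of a nonnegative, uncompensated integrand and then invoke Kallenberg's criterion (\cite[p.~43, Theorem~2.7(i)]{K14}). Writing
\[
X_A(t)=\int_{(0,t]\times \overline{A}\times (0,\infty)}
\frac{z}{(t-s)^{d/\alpha}}\,\mu(\d s\,\d y\,\d z),
\]
this theorem tells us that $X_A(t)$ converges $P$-a.s.\ to a finite value if and only if
\[
I(t,A):=\int_{(0,t]\times \overline{A}\times (0,\infty)}
\left\{1\wedge\frac{z}{(t-s)^{d/\alpha}}\right\}\d s\,\d y\,\lambda(\d z)<\infty,
\]
and in that case
\[
E[e^{i\theta X_A(t)}]=\exp\left(\int_{(0,t]\times \overline{A}\times (0,\infty)}
\bigl(e^{i\theta z/(t-s)^{d/\alpha}}-1\bigr)\,\d s\,\d y\,\lambda(\d z)\right).
\]

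The first step would be to reduce $I(t,A)$ to a single integral. Integrating out $y\in\overline{A}$ produces the factor $|\overline{A}|$, and the change of variables $s\mapsto t-s$ gives
\[
I(t,A)=|\overline{A}|\int_{(0,\infty)}\Psi_t(z)\,\lambda(\d z),
\qquad
\Psi_t(z):=\int_0^t \left\{1\wedge\frac{z}{s^{d/\alpha}}\right\}\d s.
\]
The indicator $z/s^{d/\alpha}\ge 1$ is equivalent to $s\le z^{\alpha/d}$, so splitting at $s=t\wedge z^{\alpha/d}$ yields
\[
\Psi_t(z)=\bigl(t\wedge z^{\alpha/d}\bigr)+z\int_{t\wedge z^{\alpha/d}}^{t} s^{-d/\alpha}\,\d s.
\]
For large $z$ (specifically $z\ge t^{d/\alpha}$) one has $\Psi_t(z)=t$, so the contribution from $(1,\infty)$ is bounded by $t\cdot\overline{\lambda}(1)$ and is finite by the Lévy-measure hypothesis. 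For small $z$, a direct computation of the remaining primitive and matching dominant orders as $z\to 0+$ gives
\[
\Psi_t(z)\asymp z^{1\wedge(\alpha/d)}|\log z|^{\mathbf{1}_{\{d=\alpha\}}}
\qquad(d\ne\alpha\text{ treated by the two sub-cases }d<\alpha,\ d>\alpha).
\]
Plugging this into the integral against $\lambda$ identifies \eqref{eq:xa-conv-1-} as the necessary and sufficient condition for $I(t,A)<\infty$, and hence for a.s.\ convergence of $X_A(t)$.

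For the characteristic function, apply Kallenberg's formula, pull out the $y$-integration to produce $|\overline{A}|$, and use the substitution $s\mapsto t-s$ to rewrite
\[
E[e^{i\theta X_A(t)}]
=\exp\left(|\overline{A}|\int_{(0,t]\times(0,\infty)}
\bigl(e^{i\theta z/s^{d/\alpha}}-1\bigr)\,\d s\,\lambda(\d z)\right).
\]
The inner double integral is by definition the integral of $u\mapsto e^{i\theta u}-1$ against the push-forward of $m\otimes\lambda$ (restricted to $(0,t]\times(0,\infty)$) under $(s,z)\mapsto z/s^{d/\alpha}$, which is precisely the measure $\tau$ of \eqref{eq:def-tau}. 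This yields the claimed exponential formula.

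The only genuine obstacle is the case analysis in computing $\Psi_t(z)$ as $z\to 0+$: because the primitive of $s^{-d/\alpha}$ behaves qualitatively differently in the regimes $d<\alpha$, $d=\alpha$, $d>\alpha$, one has to verify in each case which of $z$, $z^{\alpha/d}$, or $z|\log z|$ is the leading term, exactly as done earlier in Section~2 for the calculations of the pieces $A_1$--$B_2$ in Theorem~\ref{thm:sol-exist}. Everything else is routine application of Kallenberg's theorem and Fubini.
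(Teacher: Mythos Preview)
Your proposal is correct and follows essentially the same route as the paper: both express $X_A(t)$ as a Poisson integral, apply \cite[p.~43, Theorem~2.7(i)]{K14} to reduce to finiteness of $|\overline{A}|\int_{(0,\infty)}\Psi_t(z)\,\lambda(\d z)$, compute $\Psi_t(z)=(t\wedge z^{\alpha/d})+z\int_{t\wedge z^{\alpha/d}}^t s^{-d/\alpha}\,\d s$, and then carry out the three-case analysis $d\lessgtr\alpha$ to identify the small-$z$ asymptotics. A minor slip: the result you cite is from Kyprianou's book, not Kallenberg.
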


\begin{proof}
By \cite[p.43, Theorem 2.7 (i)]{K14},
$X_A(t)$ is convergent a.s.\ if and only if
\begin{align*}
\int_{(0,t]\times \R^d \times (0,\infty)}
\left(1\wedge \frac{z}{(t-s)^{d/\alpha}}{\bf 1}_{\{y\in \overline{A}\}}\right)\,
\d s\, \d y\,\lambda(\d z)
&=|\overline{A}|\int_{(0,\infty)}
\left\{\int_0^t\left(1\wedge \frac{z}{s^{d/\alpha}}\right)\,\d s\right\}\,\lambda(\d z)\\
&<\infty.
\end{align*}
Since
\begin{align*}
\int_0^t\left(1\wedge \frac{z}{s^{d/\alpha}}\right)\,\d s
&=\int_0^{t\wedge z^{\alpha/d}}\,\d s
+\int_{t\wedge z^{\alpha/d}}^t \frac{z}{s^{d/\alpha}}\,\d s\\
&=
(t\wedge z^{\alpha/d})
+
\begin{dcases}
\frac{z}{d/\alpha-1}\left((t\wedge z^{\alpha/d})^{1-d/\alpha}-t^{1-d/\alpha}\right),
& d\ne \alpha,\\
z\left(\log t-\log(t\wedge z^{\alpha/d})\right),
& d=\alpha,
\end{dcases}
\end{align*}
we have
\begin{align*}
&\int_{(0.\infty)}
\left\{\int_0^t\left(1\wedge \frac{z}{s^{d/\alpha}}\right)\,\d s\right\}\,\lambda(\d z)\\
&=\int_{(0,t^{d/\alpha}]}z^{\alpha/d}\,\lambda(\d z)
+t\overline{\lambda}(t^{d/\alpha})
+\begin{dcases}
\frac{\alpha}{d-\alpha}\int_{(0,t^{d/\alpha}]}
\left(z^{\alpha/d-1}-t^{1-d/\alpha}\right)z\,\lambda(\d z),
& d\ne \alpha,\\
\int_{(0,t^{d/\alpha}]}\left(\log t-\log(z^{\alpha/d}) \right)z\,\lambda(\d z),
& d=\alpha.
\end{dcases}
\end{align*}
We thus arrive at the first assertion.

Furthermore, it follows from
\cite[p.\ 43, Theorem 2.7 (i)]{K14}
 that, for any $\theta\in \R$,
\begin{align*}
E[e^{i\theta X_A(t)}]
&=\exp\left(\int_{(0,t]\times \R^d\times (0,\infty)}
\left(\exp\left(i\theta\frac{z}{(t-s)^{d/\alpha}}
{\bf 1}_{\{y\in \overline{A}\}}\right)-1\right)
\,\d s\,\d y\,\lambda(\d z)\right)\\
&=\exp\left(|\overline{A}|\int_{(0,t]\times (0,\infty)}
\left(\exp\left(i\theta\frac{z}{s^{d/\alpha}}\right)-1\right)\,\d s\,\lambda(\d z)\right)\\
&=\exp\left(|\overline{A}|\int_{(0,\infty)}(e^{i\theta u}-1)\,\tau({\rm d}u)\right).
\end{align*}
The proof is complete.
\end{proof}

\begin{rem}\rm Similarly to
$X_A(t)$ in \eqref{eq:X_A},
Proposition \ref{thm:xa-conv}, for any Borel set $A\subset \R^d$
with $0<|\bar A|<\infty$,
we can define
\begin{equation*}
X_A^*(t):=
\sum_{i=1}^{\infty}\frac{\zeta_i}{(t-\tau_i)^{d/\alpha}}
{\bf 1}_{\{\zeta_i/(t-\tau_i)^{d/\alpha}>1, \, \eta_i\in \overline{A}, \, \tau_i\le t\}}.
\end{equation*}
Then, following the proof of Proposition  \ref{thm:xa-conv},
we see that $X_A^*(t)$ is convergent a.s.\ for any $t>0$ if and only if
\begin{equation}\label{eq:xa-conv-1a}
\int_{(0,1]}z^{\alpha/d}\,\lambda(\d z)<\infty.
\end{equation} In this case, for any $\theta\in \R$, \[
E[e^{i\theta X_A^*(t)}]
=\exp\left(|\overline{A}|\int_{(0,\infty)}(e^{i\theta u}-1)\,\tau^*({\rm d}u)\right),
\]where $\tau^*(B)=\tau(B\cap (1,\infty))$ for
$B\in {\cal B}((0,\infty))$;
that is,
\[
\tau^*(B)=(m\otimes \lambda)\left(\left\{(s,z)\in (0,t]\times (0,\infty) :
z/s^{d/\alpha}\in B\cap (1,\infty)\right\}\right),
\quad B\in {\cal B}((0,\infty)).
\]
In particular, by definition,
\[
X_A(t)=X_A^*(t)+\sum_{i=1}^{\infty}\frac{\zeta_i}{(t-\tau_i)^{d/\alpha}}
{\bf 1}_{\{\zeta_i/(t-\tau_i)^{d/\alpha}\le 1, \, \eta_i\in \overline{A}, \, \tau_i\le t\}}.
\]
Roughly speaking,
for $\alpha>d$,
since \eqref{eq:xa-conv-1a} is weaker than \eqref{eq:xa-conv-1-},
the second term in the right hand side above dominates $X_A^*(t)$.
We also mention that for $\alpha=2$ and $d=1$,
$X_A^*(t)$ is the same with $X_A(t)$ in \cite{CK22}.
\end{rem}

For a Borel
set $A\subset \R^d$,
let
\[
\overline{X}_A(t)=\sup{\left\{(t-\tau_i)^{-d/\alpha}\zeta_i
: i\ge 1,
\tau_i\le t, \, \eta_i\in \overline{A}\right\}}
\]
and
\[
T_A(r)=\left\{(s,y,z)\in(0,t]\times \overline{A}
\times (0,\infty) :
(t-s)^{-d/\alpha}z>r
\right\}.
\]
Then,  for any $r>1$,
$\overline{X}_A(t)\le r$ if and only if
$\mu(T_A(r))=0$.
Since $\nu(T_A(r))=|\overline{A}|\overline{\tau}(r)$,
we obtain for all $r>1$,
\begin{equation}\label{eq:xa-poisson}
P(\overline{X}_A(t)>r)=1-P(\overline{X}_A(t)\le r)=1-P(\mu(T_A(r))=0)
=1-e^{-|\overline{A}|\overline{\tau}(r)}.
\end{equation}
In particular, we have
\begin{prop}\label{prop:tau-tail}
Let $A\subset \R^d$ be a Borel
set
with $0<|\overline{A}|<\infty$.
\begin{enumerate}
\item[{\rm (i)}]
If \eqref{eq:xa-conv-1-} holds, then for
\[
P(X_A(t)>r)\sim P(\overline{X}_A(t)>r)\sim |\overline{A}|\overline{\tau}(r),
\quad r\rightarrow\infty.
\]
\item[{\rm (ii)}]
If \eqref{eq:xa-conv-1a} holds, then for
\[
P(X_A^*(t)>r)\sim P(\overline{X}_A(t)>r)\sim |\overline{A}|\overline{\tau}(r),
\quad r\rightarrow\infty.
\]
\end{enumerate}
\end{prop}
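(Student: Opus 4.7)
The plan is to obtain Proposition~\ref{prop:tau-tail} as a direct consequence of the structural results already established: the Poisson computation \eqref{eq:xa-poisson} for $\overline{X}_A(t)$, the infinite divisibility statement of Proposition~\ref{thm:xa-conv}, and the regularity of $\overline{\tau}$ given by Lemma~\ref{lem:tau-finite}. No new integral estimates should be needed.

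First, I would record that \eqref{eq:xa-poisson} already gives
\[
P(\overline{X}_A(t)>r)=1-e^{-|\overline{A}|\overline{\tau}(r)}\sim |\overline{A}|\overline{\tau}(r), \quad r\to\infty,
\]
since Lemma~\ref{lem:tau-finite} implies $\overline{\tau}(r)\to 0$ as $r\to\infty$. This handles the right-hand tail asymptotic in both (i) and (ii). Next, by Proposition~\ref{thm:xa-conv}, under \eqref{eq:xa-conv-1-} the random variable $X_A(t)$ is infinitely divisible with L\'evy measure $|\overline{A}|\tau$ and drift zero, and similarly $X_A^*(t)$ is infinitely divisible with L\'evy measure $|\overline{A}|\tau^*$; observe that $\overline{\tau^*}(r)=\overline{\tau}(r)$ for all $r>1$, so the two tails agree asymptotically.

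The remaining step is to bridge the tail of an infinitely divisible law to the tail of its L\'evy measure. For this I would mimic the proof of Theorem~\ref{thm:eta-tail}: by Lemma~\ref{lem:tau-finite} the function $\overline{\tau}$ is of extended regular variation at infinity, so its normalization $\overline{\tau}_0(r):=\overline{\tau}(r)/\overline{\tau}(1)$ satisfies $\overline{\tau}_0(r+s)/\overline{\tau}_0(r)\to 1$ for each fixed $s>0$ and $\limsup_{r\to\infty}\overline{\tau}_0(r)/\overline{\tau}_0(2r)\le 2^{-\delta_1}<1$ for some $\delta_1>0$. Applying \cite[p.~429]{BGT89}, $\overline{\tau}_0$ is subexponential, i.e.\ belongs to $\mathcal{S}_0$ in the sense of \cite[Sect.~3]{P07}. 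Consequently \cite[Theorem~3.3]{P07} with $\gamma=0$ yields
\[
\lim_{r\to\infty}\frac{P(X_A(t)>r)}{|\overline{A}|\overline{\tau}(r)}=1,
\qquad
\lim_{r\to\infty}\frac{P(X_A^*(t)>r)}{|\overline{A}|\overline{\tau}(r)}=1.
\]
Combining with the first display completes both parts.

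I do not expect any substantive obstacle: the only point requiring care is to check that under the weaker moment hypothesis \eqref{eq:xa-conv-1a} in (ii) one still has $\overline{\tau}(r)<\infty$ for all $r>0$ and that Lemma~\ref{lem:tau-finite} still applies to $\tau^*$. For the first, \eqref{eq:xa-conv-1a} is exactly condition \eqref{eq:tau-conv} of Lemma~\ref{lem:tau-finite}, and for the second, $\tau^*$ differs from $\tau$ only on the bounded set $(0,1]$, which changes neither extended regular variation at infinity nor subexponentiality of the normalization. Thus the argument above goes through verbatim in both cases.
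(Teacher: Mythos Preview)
Your proposal is correct and matches the paper's approach exactly: the paper omits the proof, stating only that it is similar to that of Theorem~\ref{thm:eta-tail}, and you have reproduced precisely that argument, replacing $\eta$ by $|\overline{A}|\tau$ (respectively $|\overline{A}|\tau^*$) and invoking Lemma~\ref{lem:tau-finite} in place of Lemma~\ref{lem:eta-express}(ii). Your observation that \eqref{eq:xa-conv-1a} coincides with \eqref{eq:tau-conv}, and that $\overline{\tau^*}(r)=\overline{\tau}(r)$ for $r>1$, correctly handles the only points needing verification.
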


We omit the proof of
Proposition \ref{prop:tau-tail}
because it is similar to that of Theorem \ref{thm:eta-tail}.

\begin{rem}\rm  If \eqref{eq:xa-conv-1a} fails (i.e., if $\int_{(0,1]}z^{\alpha/d}\,\lambda(\d z)=\infty$),
then
$\overline{\tau}(r)=\infty$ for any $r>0$ by Lemma \ref{lem:tau-finite}.
Therefore,  by \eqref{eq:xa-poisson},
we have
for any Borel set $A\subset \R^d$ with
$0<|\overline{A}|<\infty$,
$$P(\overline{X_A}(t)=\infty)=1.$$
Namely, if we take $A=B(x,r)$ for $x\in \R^d$ and $r>0$,
then for any
$M>0$
large enough,
there exists a Poisson point $(\tau,\eta,\zeta)\in (0,t]\times \R^d\times (0,\infty)$
associated with $\mu$ such that
$\eta\in B(x,r)$ and
$(t-\tau)^{-d/\alpha}\zeta>
M/g(0)$.
This in particular (see \eqref{eq:x-decomposition} above
that holds for all $d\ge1$) implies that
\[
\sup_{y\in B(x,r)}X(t,y)\ge p_{t-\tau}(0)\zeta
=g(0)\frac{\zeta}{(t-\tau)^{d/\alpha}}\ge
M
\]
and thus
\[
\sup_{y\in B(x,r)}X(t,y)=\infty, \quad \text{$P$-a.s.}\]
\end{rem}

\subsection{Multiplicative noise of bounded nonlinearity}\label{subsect:nonlinear}
In this subsection, we make a comment on the validity of  Theorems \ref{thm:limsup-whole} and \ref{thm:local-growth}
to the mild solution of \eqref{e:levy1}.
Let $\sigma$ be a Lipschitz continuous function on $[0,\infty)$ such that
for some positive constants $k_1$ and $k_2$ with $k_1<k_2$,
\begin{equation}\label{eq:sigma}
k_1\le \sigma(x)\le k_2, \quad x\in [0,\infty).
\end{equation}
If $\int_{(0,\infty)}z\,\lambda(\d z)<\infty$,
then, by \cite[Th\'eor\`eme 1.2.1]{S98} and \cite[Subsection 2.2]{CK20},
there exists a unique predictable process $Y(t,x)$ such that
\[
Y(t,x)=\int_{(0,t]\times \R^d}p_{t-s}(x-y)\sigma(Y(s,y))\,\Lambda(\d s\, \d y),
\quad (t,x)\in
(0,\infty)  
\times \R^d,
\]
which is a mild solution to \eqref{e:levy1}.
Then, by \eqref{eq:sigma},
\[
k_1X(t,x) \le Y(t,x)\le k_2X(t,x), \quad (t,x)\in
(0,\infty)  
\times \R^d, \ \text{$P$-a.s.},
\]
where $X(t,x)$ is a mild solution to \eqref{eq:fractional-she1}.
Hence, Theorems \ref{thm:limsup-whole} and \ref{thm:local-growth} remain valid for $Y(t,x)$.

\bigskip

\noindent {\bf Acknowledgements.}\,\,
The research of Yuichi Shiozawa is supported by JSPS KAKENHI Grant Numbers JP22K18675, JP23H01076 and JP23K25773.
The research of Jian Wang is supported by the National Key R\&D Program of China (2022YFA1006003)
and  the National Natural Science Foundation of China (Nos.\ 12071076 and 12225104).

\end{document}